\newtheorem{theorem}{Theorem}[section]
\newtheorem{lemma}{Lemma}[section]
\newtheorem{remark}{Remark}[section]
\newtheorem{proposition}{Proposition}[section]
\newtheorem{corollary}{Corollary}[section]
\newcommand \udotini {\dot{u}_0}
\newcommand \p {\partial}
\newcommand \R {\mathbb{R}}
\renewcommand \L {\mathrm{L}}
\newcommand \W {\mathrm{W}}
\newcommand \WW {\mathbf{W}}
\newcommand \WWW {\mathbb{W}}
\newcommand \B {\mathrm{B}}
\newcommand \BB {\mathbf{B}}
\newcommand \LL {\mathbf{L}}
\newcommand \LLL {\mathbb{L}}
\renewcommand \H {\mathrm{H}}
\newcommand \HH {\mathbf{H}}
\newcommand \I {\mathrm{I}}
\newcommand \III {\mathbb{I}}
\newcommand \Id {\mathrm{Id}}
\renewcommand \d {\mathrm{d}}
\renewcommand \det {\mathrm{det}}
\newcommand \trace {\mathrm{tr}}
\newcommand \cof {\mathrm{cof}}
\DeclareMathOperator{\divg}{div}
\gdef\SetFigFont#1#2#3#4#5{%
  \reset@font\fontsize{#1}{#2pt}%
  \fontfamily{#3}\fontseries{#4}\fontshape{#5}%
  \selectfont}%
\title{A damped elastodynamics system under the global injectivity condition: Local wellposedness in $L^p$-spaces}
\author[1, 2]{S\'ebastien Court}
\affil[1]{\begin{small}Department of Mathematics, University of Innsbruck, Technikerstrasse 13, 6020 Innsbruck, Austria.\end{small}}
\affil[2]{\begin{small}Digital Science Center, University of Innsbruck, Innrain 15, 6020 Innsbruck, Austria. Email: {\tt sebastien.court@uibk.ac.at}\end{small}}
\begin{document}

\maketitle

\begin{abstract}
The purpose of this paper is to model mathematically mechanical aspects of cardiac tissues. The latter constitute an elastic domain whose total volume remains constant. The time deformation of the heart tissue is modeled with the elastodynamics equations dealing with the displacement field as main unknown. These equations are coupled with a pressure whose variations characterize the heart beat. This pressure variable corresponds to a Lagrange multiplier associated with the so-called global injectivity condition. We derive the corresponding coupled system with nonhomogeneous boundary conditions where the pressure variable appears. For mathematical convenience a damping term is added, and for a given class of strain energies we prove the existence of local-in-time solutions in the context of the $L^p$-parabolic maximal regularity.
\end{abstract}

\noindent{\bf Keywords:} Nonlinear elastodynamics, $L^p$-maximal parabolic regularity, Cardiac electrophysiology.\\
\hfill \\
\noindent{\bf AMS subject classifications (2020): 74B20, 35K20, 35K55, 35K61, 74F99, 74H20, 74H30, 74-10.} 

\newpage
\tableofcontents

\section{Introduction}
Heart beats enable the oxygenation of cardiac tissues and so guarantees an healthy electric activity. Mechanically, the oxygenation is related to the variations of the {\it hydraulic pressure}~\cite{Chernysh}, which is mathematically a Lagrange multiplier corresponding to the constraint of constant global volume of the heart tissues, as the latter are crossed by blood, considered to be an incompressible fluid. The time variation of this pressure quantifies shape variations of the heart via its deformation, and so determines the intensity of this contraction. In particular, this quantity is central when studying defibrillation, which is the underlying motivation of our contribution.

The focus of the present article lies in the mathematical modeling of the time deformations of the heart tissues, and related wellposedness questions for the derived system of partial differential equations dealing with the displacement field as main unknown. The heart is modeled as an hyperelastic tissue, crossed by blood, assumed to be an incompressible fluid, and so the total volume inside the heart has to remain constant through the time. Considering that the exterior part of the boundary of the domain is only subject to rigid displacements (see Figure~\ref{fig0}), this means that the total volume of the heart itself has to remain constant through the time. Therefore, beside the displacement field, we also introduce a pressure variable, that is a Lagrange multiplier associated with this constraint.\\
\begin{center}
\scalebox{0.4}{
	\begin{picture}(0,0)
	\includegraphics{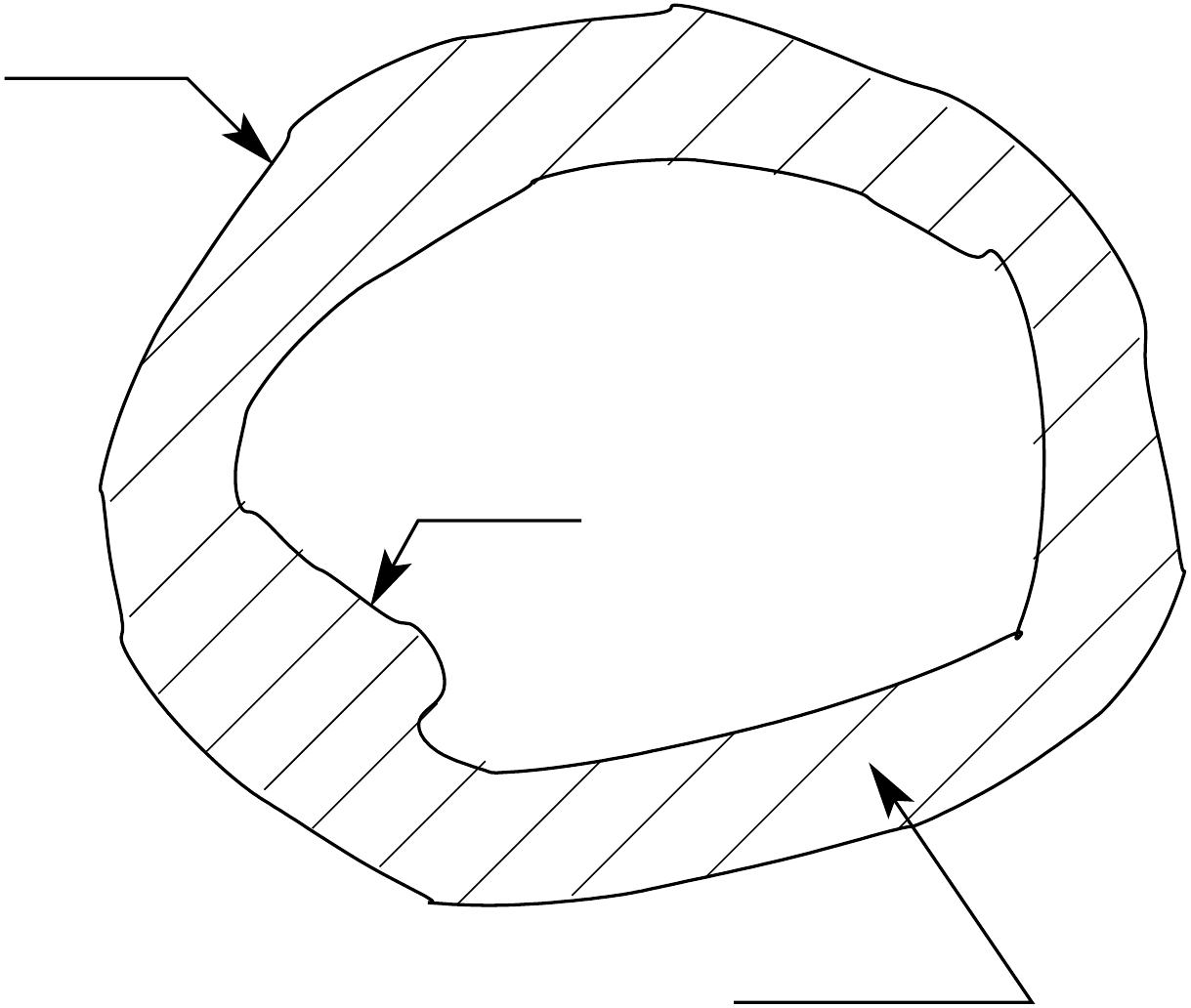}%
	\end{picture}%
	\setlength{\unitlength}{4144sp}%
	\begin{picture}(5566,4712)(2139,-6028)
	\put(2296,-1501){\makebox(0,0)[lb]{\smash{{\SetFigFont{22}{24.4}{\rmdefault}{\mddefault}{\updefault}{\color[rgb]{0,0,0}$\Gamma_D$}%
	}}}}
	\put(6706,-1591){\makebox(0,0)[lb]{\smash{{\SetFigFont{22}{24.4}{\rmdefault}{\mddefault}{\updefault}{\color[rgb]{0,0,0}$u=0$}%
	}}}}
	\put(4186,-3571){\makebox(0,0)[lb]{\smash{{\SetFigFont{22}{24.4}{\rmdefault}{\mddefault}{\updefault}{\color[rgb]{0,0,0}$\Gamma_N$}%
	}}}}
	\put(5671,-5821){\makebox(0,0)[lb]{\smash{{\SetFigFont{22}{24.4}{\rmdefault}{\mddefault}{\updefault}{\color[rgb]{0,0,0}$\Omega$}%
	}}}}
	\end{picture}%
}
\nopagebreak
\begin{figure}[H]
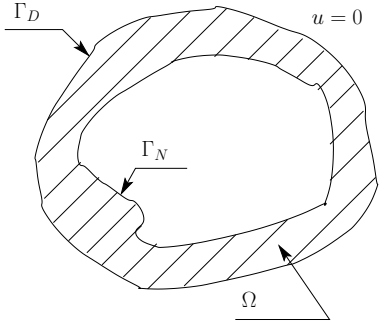

\caption{Slice of the elastic domain $\Omega$. The exterior boundary $\Gamma_D$ is immobile (or only subject to rigid displacements), while the boundary $\Gamma_N$ interacting with the blood is free, subject to the constraint of constant total volume.}
\label{fig0}
\end{figure}
\end{center}

\FloatBarrier

We are conscious that a complete modeling of such a problem would involve multiphysics considerations. More precisely, we would need, for example, to couple the bidomain model with hyperelasticity for describing the state equations~\cite{Dal2013, Mroue2019}. The analysis and computations related to such a coupled problem are both challenging topics. Due to the obvious complexity, we consider in this paper only hyperelasticity, and rather focus on the modeling and wellposedness questions that arise. Hyperelasticity is already a difficult topic from a mathematical point of view, therefore analysis is achieved with the help of an artificial damping in the hyperelastic model.

\paragraph{A damped hyperelastic model.} Given a smooth bounded domain $\Omega$ of $\R^d$ ($d=2$ or $3$), the state equation we consider in this paper is an elastodynamics system with damping and global volume preserving constraint. Its unknowns are a displacement field denoted by $u$, and a pressure $\mathfrak{p}$ depending only on the time variable, playing the role of a multiplier for taking into account this constraint. Data are represented by right-hand-sides $f$ and $g$, that represent volume forces in~$\Omega$ and boundary forces on~$\Gamma_N$, respectively. The initial state is given by the couple $(u_0,\udotini)$. The system that $(u,\mathfrak{p})$ is assumed to satisfy is given as follows: 
\begin{equation}
\begin{array}{rcl}
\displaystyle \ddot{u} -  \kappa\Delta \dot{u} -\divg \left((\I+\nabla u) \Sigma(u) \right) = f & & \text{in } \Omega \times (0,T),\\
\displaystyle\left(\kappa \frac{\p \dot{u}}{\p n} + (\I+\nabla u) \Sigma(u)\right)n + \mathfrak{p}\, \cof (\I +\nabla u)n = g & & 
\text{on } \Gamma_N \times (0,T),\\
\displaystyle\int_{\Omega} \det(\I+\nabla u)\, \d \Omega = 
\int_{\Omega} \det(\I+\nabla u_0)\, \d \Omega & & \text{in } (0,T), \\
\displaystyle  u = 0 & & \text{on } \Gamma_D \times (0,T), \\
\displaystyle u(\cdot,0) = u_0, \quad \dot{u}(\cdot,0) = \udotini & & \text{in } \Omega.
\end{array}\label{sysmain}
\end{equation}
The density of the material is assumed to be constant equal to~$1$ for simplicity. The strain energy model chosen for describing the elastic behavior determines the tensor field~$\Sigma$ (see section~\ref{sec-notation-ass} for details about the elasticity model). The damping term $\kappa\Delta \dot{u}$ (with $\kappa>0$ constant) is added for the sake of mathematical convenience. Indeed, it enables us to rely on a well-established framework for parabolic equations, while the original system is hyperbolic and nonlinear. The constraint in the third equation of~\eqref{sysmain}, namely the so-called Ciarlet-Ne\v{c}as global injectivity condition studied in~\cite{Necas1987}, reflects the fact that the global volume of the domain deformed by $\Id +u$ remains constant over time. Note that the homogeneous Dirichlet condition on~$\Gamma_D$ could be replaced by $u=0$ {\it up to rigid displacements}, leading us to consider solutions in quotient spaces. We explain how to derive system~\eqref{sysmain} in section~\ref{sec-derivPDE}. From section~\ref{sec-notation-ass} we will use the notation~$\sigma(\nabla u) = (\I+\nabla u) \Sigma(u)$ and~$\Phi(u) = \I+\nabla u$.

\paragraph{Wellposedness questions for nonlinear elasticity models.}
Without damping, and without volume constraints, global-in-time existence of elastic waves was obtained in~\cite{Sideris1996, Sideris1996bis, Agemi2000} assuming the so-called {\it null condition}, and in~\cite{Sideris2000} assuming that the stored energy satisfies a {\it nonresonance} condition. When considering the more classical incompressibility condition, namely the local constraint $\det(\I +\nabla u) \equiv 1$, the same question was investigated in~\cite{Ebin1993, Ebin1996, Becca-Thesis2003, Becca2005, Becca2007, Lei2016}, assuming that the data are small enough. While the system~\eqref{sysmain} satisfied by the displacement is written using the Lagrangian formalism, existence results were obtained in~\cite{Lei2015} (in the context of Hookean elasticity) and in~\cite{Yin2016} by adopting the Eulerian formalism. For convenience, we prefer to consider a parabolic regularization, represented by the dissipation term~$-\kappa \Delta \dot{u}$, orienting the study of system~\eqref{sysmain} towards methods related to parabolic equations. In this fashion, let us mention the contributions of~\cite{Zhang2009, Diagana2017, Vu2017, Rahmoune2017, Gou2020}. The parabolic framework is in particular suitable for realizing numerical simulations with finite element methods. As far as we know, the mathematical study of system~\eqref{sysmain} with the non-local constraint of constant global volume has never been addressed in the unsteady case.

\paragraph{Strategy.} 
It is reasonable to assume that the state variables are continuous in time, with values in smooth spaces. Therefore a strong functional framework is adopted, corresponding to the so-called $L^p$-maximal parabolic regularity~\cite{AMS2003}, leading us to assume that a solution $u$ of~\eqref{sysmain} satisfies
\begin{equation*}
\dot{u}\in \L^p(0,T; \WW^{2,p}(\Omega)) \cap \W^{1,p}(0,T;\LL^p(\Omega)),
\end{equation*}
with $p>3$. General assumptions are considered for the strain energy, and thus for the tensor operator~$\Sigma$. With the help of results obtained in~\cite{Pruess2002, Denk2006}, we first study system~\eqref{sysmain} linearized around $0$, leading to a wellposedness result for the nonhomogeneous linear system. Next, Lipschitz estimates are obtained for the nonlinearities, and we use a fixed-point method for proving existence and uniqueness of solutions for~\eqref{sysmain}, by assuming that $T>0$ is small enough. The difficulty lies in the careful and delicate derivation of these Lipschitz estimates in function of positive powers of~$T$. Finally, a continuation argument enables us to characterize the lifespan of the solutions. The main result is stated in Theorem~\ref{th-wellposed}.

\begin{remark} \label{remark-u0}
For the sake of convenience we will assume $u_0 = 0$, meaning that the initial deformation $\Id+u_0$ is the identity, and therefore that the reference configuration (described by $\Omega$, $\Gamma_D$ and $\Gamma_N$ as given in Figure~\ref{fig0}) corresponds to the initial configuration at $t=0$. Considering the more general case $u_0 \neq 0$ would enable us to choose a reference configuration as convenient as desired (in particular smooth), but would introduce other difficulties: Either system~\eqref{sysmain} would need to be linearized around~$u_0$, leading to a linear system with non-constant coefficients, or additional terms in the technical Lipschitz estimates of section~\ref{sec-Lip} would need to be taken into account. Therefore we choose $u_0=0$, preferring making assumptions on the initial geometric configuration (see section~\ref{sec-assgeo}). However, for numerical realizations, considering $u_0 \neq 0$ would be of interest, as it would enable us to choose a simple computational domain.
\end{remark}

\paragraph{Plan.} The paper is organized as follows: We introduce notation, assumptions and functional setting in section~\ref{sec-notation}. In particular, in section~\ref{sec-strain} we give examples of class of strain energies that fulfill the assumptions that we introduce in section~\ref{sec-notation-ass}. In section~\ref{sec-well} we study the linearized system in the context of the $L^p$-maximal parabolic regularity, and deduce in Corollary~\ref{coro-linNH} a compact estimate for the nonhomogenenous system. Section~\ref{sec-well-main} is devoted to the local-in-time existence and uniqueness of solutions for system~\eqref{sysmain}. We prove Lipschitz estimates in section~\ref{sec-Lip}, and derive the main result in section~\ref{sec-mainTh}, namely Theorem~\ref{th-locexist}. In Appendix~\ref{sec-modeling}, we present modeling aspects of the problem, in particular how to derive system~\eqref{sysmain} from the least-action principle. Appendix~\ref{sec-app-B} contains the proof of a technical lemma.

\subsection*{Acknowledgments}
The author thanks Prof. Karl Kunisch (University of Graz \& RICAM, Linz), Prof. Gernot Plank (Biotechmed Graz) and his research group. The discussions and ideas he had during his stay in Graz have lead to the present article. 

\section{Notation and preliminaries} \label{sec-notation}

The notation presented here will be used throughout the rest of the paper. The reader is therefore invited to refer to the present section while browsing the other sections. 

\subsection{Linear Algebra notation}

The inner product between two vectors $u$, $v \in \R^d$ is denoted as $u\cdot v$, the corresponding Euclidean norm as $|u|_{\R^d}$, and the tensor product is denoted by $u\otimes v \in \R^{d\times d}$, such that $(u\otimes v)_{ij} := u_i v_j$. The inner product between two matrices $A$, $B\in \R^{d\times d}$ is denoted as $A:B = \mathrm{trace}(A^TB)$, and the associated Euclidean norm satisfies $| AB |_{\R^{d\times d}} \leq | A|_{\R^{d\times d}} |B|_{\R^{d\times d}}$. The tensor product between matrices is denoted as $A \otimes B \in \R^{d\times d \times d \times d}$, such that for all matrix $C \in \R^{d\times d}$, $(A\otimes B)C := (B:C)A \in \R^{d\times d}$. We denote by $\cof (A)$ the cofactor matrix of any matrix field $A$. Recall that this is a polynomial function of the coefficients of $A$. When $A$ is invertible, the following formula holds
\begin{equation*}
\cof (A) = (\det (A))A^{-T} .
\end{equation*}
Recall that $H \mapsto (\cof A):H$ is the differential of $A \mapsto \det(A)$ at point $A$. In order to distinguish scalar fields, vector fields and matrix fields, we will use the following type of notation
\begin{equation*}
\L^{p}(\Omega) = \left\{\varphi :\Omega \rightarrow \R \mid \int_{\Omega} |\varphi|_{\R}^p\d \Omega <\infty \right\}, \quad
\LL^{p}(\Omega) = [\L^p(\Omega)]^d, \quad
\LLL^{p}(\Omega) = [\L^p(\Omega)]^{d\times d},
\end{equation*}
that we transpose by analogy to other kinds of Lebesgue, Sobolev and Besov spaces.

\subsection{Geometric assumptions and functional spaces} \label{sec-assgeo}
The domain $\Omega \subset \R^2$ or $\R^3$  will be assumed smooth and bounded. We consider two open disjoint subsets $\Gamma_D, \Gamma_N \subset \p \Omega$ such that $\overline{\Gamma_D} \cup \overline{\Gamma_N} = \p \Omega$, assumed to be nontrivial in the sense that their surface Lebesgue measures satisfy $|\Gamma_D| > 0$ and $|\Gamma_N|>0$, and smooth in the sense that the surfaces $\Gamma_D$ and $\Gamma_N$ are {\it regular}, namely at any point of $\Gamma_D$ and $\Gamma_N$ we can define a normal vector. More specifically we assume that the outward unit normal denoted by $n$ lies in $\WW^{2-1/p,p}(\Gamma_N)$ on $\Gamma_N$, which implies in particular $n\in\HH^{1/2}(\Gamma_N)$. Furthermore we assume that $\Gamma_N$ is closed, so that~$\HH^{1/2}(\Gamma_N)' = \HH^{-1/2}(\Gamma_N)$.

We have introduced fractional Sobolev spaces. Recall the definition of~$\W^{\alpha,p}(D)$ for any smooth bounded domain $D\subset \R^n$ when $\alpha \in (0,1)$ (see~\cite{DiNezza}):
\begin{equation}
\varphi \in \W^{\alpha,p}(D) \Leftrightarrow
\|\varphi\|_{\W^{\alpha,p}(D)} := \left(
\| \varphi\|_{\L^p(D)}^p + \int_D\int_D 
\frac{|\varphi(x)-\varphi(y)|^p}{|x-y|^{n+\alpha p}}\d x\d y
\right)^{1/p} <\infty.
\label{def-fractional}
\end{equation}
It is well-known that $\W^{\gamma,p}(0,T;\R)$ is continuously embedded in $\mathcal{C}([0,T];\R)$ when $\gamma p>1$. A proof is provided in~\cite[Theorem~8.2]{DiNezza}, relying on~\cite[Lemma~2.2]{Giusti} which introduces an Hardy-Littlewood maximal function. As stated in these references, the constant of this embedding can {\it a priori} depend on the size of the domain~$(0,T)$, and thus on~$T$, and may possibly increase when $T$ decreases. Let us derive carefully the dependence with respect to~$T$ of the constant of this embedding, by providing a self-consistent proof in section~\ref{sec-app-B}.

\begin{lemma} \label{lemma-DiNezza}
Let be $B$ a Banach space, and $\varphi \in \W^{\gamma,p}(0,T;\B)$ for some $1/p<\gamma <1$. Then for all $t\in (0,T]$ we have
\begin{equation*}
| \varphi(t) -\varphi(0) | \leq CT^{\gamma-1/p} \|\varphi\|_{\W^{\gamma,p}(0,T;B)},
\end{equation*}
where the constant $C>0$ depends only on $p$ and $\gamma$. Further, we have
\begin{equation*}
\|\varphi\|_{\L^{\infty}(0,T;B)} \leq |\varphi(0)| + 
CT^{\gamma-1/p} \|\varphi\|_{\W^{\gamma,p}(0,T;B)}.
\end{equation*}
\end{lemma}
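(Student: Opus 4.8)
The plan is to prove the pointwise bound first and then deduce the $\L^\infty$-bound by the triangle inequality. For the main estimate, I would fix $t \in (0,T]$ and try to control $|\varphi(t) - \varphi(0)|$ by the Gagliardo seminorm
\[
[\varphi]_{\W^{\gamma,p}(0,T;B)}^p = \int_0^T\int_0^T \frac{|\varphi(\tau)-\varphi(s)|^p}{|\tau-s|^{1+\gamma p}}\,\d s\,\d\tau.
\]
The idea I would use is the standard "averaging" trick: for any measurable set $E \subset (0,T)$ of positive measure one has, via the triangle inequality and Jensen/H\"older,
\[
|\varphi(t)-\varphi(0)| \le \frac{1}{|E|}\int_E |\varphi(t)-\varphi(s)|\,\d s + \frac{1}{|E|}\int_E |\varphi(s)-\varphi(0)|\,\d s,
\]
and then bound each average by H\"older in $s$ against the weight $|t-s|^{-(1+\gamma p)/p}$ coming from the Gagliardo kernel. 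To make the powers of $T$ explicit and to keep the weight integrable, the natural choice is to take $E$ to be an interval of length comparable to $t$ (for instance $E = (0,t)$ or a dyadic-type chain of intervals shrinking towards the two endpoints $t$ and $0$), so that $|t-s|$ stays comparable to $|E|$ on a fixed fraction of $E$. Carrying out the H\"older step with exponents $p$ and $p'$ produces a factor $\left(\int_E |t-s|^{-(1+\gamma p)/(p-1)}\,\d s\right)^{1/p'}$, which is finite precisely because $\gamma p > 1$ (this is where the hypothesis $\gamma > 1/p$ enters), and a homogeneity count in $t$ gives exactly the exponent $\gamma - 1/p$; adding back the $\L^p$-norm term, one arrives at $|\varphi(t)-\varphi(0)| \le C T^{\gamma-1/p}\|\varphi\|_{\W^{\gamma,p}(0,T;B)}$ with $C$ depending only on $p$ and $\gamma$.

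Concretely, I would follow the route of \cite[Theorem~8.2]{DiNezza} (the Hardy–Littlewood maximal function argument referenced via \cite[Lemma~2.2]{Giusti}) but track all constants. One clean way: set $\psi(s) = \varphi(s) - \varphi(0)$ so that $\psi(0) = 0$, and estimate $|\psi(t)|$ by a telescoping sum over the intervals $I_k = (t - 2^{-k}t,\, t - 2^{-k-1}t)$ for $k \ge 0$ together with the symmetric family near $0$; on each pair of consecutive intervals the mean oscillation of $\psi$ is controlled by $|I_k|^{\gamma - 1/p}$ times a localized Gagliardo seminorm, and the geometric series in $k$ converges because $\gamma - 1/p > 0$, summing to something $\lesssim t^{\gamma - 1/p}[\varphi]_{\W^{\gamma,p}(0,T;B)} \le T^{\gamma-1/p}[\varphi]_{\W^{\gamma,p}(0,T;B)}$. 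The Banach-space-valued setting requires no change: every inequality used is an application of the triangle inequality, Jensen's inequality and H\"older's inequality, all of which hold verbatim for $B$-valued Bochner-integrable functions, so the scalar proof transfers directly.

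Once the first inequality is established, the second is immediate: for any $t \in (0,T]$,
\[
|\varphi(t)| \le |\varphi(0)| + |\varphi(t)-\varphi(0)| \le |\varphi(0)| + C T^{\gamma-1/p}\|\varphi\|_{\W^{\gamma,p}(0,T;B)},
\]
and taking the essential supremum over $t$ (noting that the continuous-in-time representative exists since $\gamma p > 1$) gives $\|\varphi\|_{\L^\infty(0,T;B)} \le |\varphi(0)| + C T^{\gamma - 1/p}\|\varphi\|_{\W^{\gamma,p}(0,T;B)}$.

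The main obstacle is bookkeeping rather than conceptual: one must choose the averaging sets (or the dyadic chain) so that the resulting weight integrals are genuinely finite with a constant independent of $T$, and verify that the homogeneity in $t$ comes out to be exactly $\gamma - 1/p$ with no hidden $T$-dependence sneaking in through, say, the $\L^p$-norm term or the overlap of intervals. Care is also needed to ensure the estimate is stated for the continuous representative and that the constant truly depends only on $p$ and $\gamma$ (in particular not on $d$, since the domain here is one-dimensional, nor on $T$). The full details are deferred to section~\ref{sec-app-B}.
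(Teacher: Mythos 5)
Your ``concrete'' version — telescoping over a dyadic chain of intervals shrinking towards each of the two endpoints, controlling consecutive mean oscillations by H\"older against the Gagliardo kernel, and summing the geometric series thanks to $\gamma - 1/p > 0$ — is exactly the route the paper takes in Appendix~B (the paper uses centred balls $\omega_{\tau,2^{-i}t}$, you use one-sided dyadic intervals, but the mechanism is identical, including the appeal to Lebesgue differentiation / the continuous representative to identify the limit of the averages with the pointwise values). One caution about your opening sketch: bounding $\frac{1}{|E|}\int_E |\varphi(t)-\varphi(s)|\,\d s$ by H\"older against $|t-s|^{-(1+\gamma p)/p}$ produces the one-variable slice $\bigl(\int_E |\varphi(t)-\varphi(s)|^p |t-s|^{-1-\gamma p}\,\d s\bigr)^{1/p}$, which is \emph{not} controlled pointwise in $t$ by the two-variable Gagliardo seminorm, so that shortcut would not close on its own; the telescoping of \emph{averages over pairs of sets} (giving genuine double integrals) is what makes the estimate go through, and you get that right in the detailed version.
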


Throughout the paper, we denote by $C$ any generic positive constant depending only on $\Omega$, the exponent $p$ and $\kappa$, in particular it is independent of~$T$.

We will assume that $p>3$. Given $T>0$, the displacement variable $u$ and its time-derivative $\dot{u}$ will be considered in the spaces defined below:
\begin{equation*}
\begin{array} {l}
 u \in \mathcal{U}_{p,T}(\Omega)  :=  \W^{1,p}(0,T; \WW^{2,p}(\Omega)\cap \WW^{1,p}_{0,D}(\Omega)) 
\cap \W^{2,p}(0,T; \LL^p(\Omega )), \\
 \dot{u} \in \dot{\mathcal{U}}_{p,T}(\Omega)  :=  
\L^p(0,T; \WW^{2,p}(\Omega)\cap \WW^{1,p}_{0,D}(\Omega)) 
\cap \W^{1,p}(0,T;\LL^p(\Omega)),
\end{array}
\end{equation*}
where~$\WW^{1,p}_{0,D}(\Omega)$ is the space of functions~$v \in \WW^{1,p}(\Omega)$ satisfying~$v_{|\Gamma_D}=0$. We denote by $p'$ the dual exponent of $p$ satisfying $1/p+1/{p'} = 1$. Using the notation of~\cite{Triebel}, the trace space for $\dot{u} \in \dot{\mathcal{U}}_{p,T}(\Omega)$ involves the Besov spaces obtained by real interpolation as $\displaystyle \left(\LL^p(\Omega);\WW^{2,p}(\Omega)\right)_{1/{p'},p} =: \BB^{2/{p'}}_{pp}(\Omega)$ and $\big(\LL^p(\Omega) ;\WW^{1,p}_{0,D}(\Omega)\big)_{1/{p'},p} =: \mathring{\BB}^{1/{p'}}_{pp}(\Omega)$, which coincide with $\WW^{2/{p'},p}(\Omega)$ and $\WW^{1/{p'},p}_{0,D}(\Omega)$, respectively. Therefore the initial conditions will be assumed to lie in the trace space of $\mathcal{U}_{p,T} \times \dot{\mathcal{U}}_{p,T}(\Omega)$, namely:
\begin{equation*}
(u_0,\udotini) \in \mathcal{U}_p^{(0,1)}(\Omega)  := 
\left(\WW^{2,p}(\Omega)\cap \WW^{1,p}_{0,D}(\Omega)  \right)
\times
\left(\WW^{2/{p'},p}(\Omega) \cap \WW^{1/{p'},p}_{0,D}(\Omega)\right).
\end{equation*}
Note that the space $\mathcal{U}_p^{(0,1)}(\Omega)$ coincides with the set $
\left\{		(u(0),\dot{u}(0)) \mid u \in \mathcal{U}_{p,T}(\Omega)\times \dot{\mathcal{U}}_{p,T}(\Omega)	\right\}$. We refer to~\cite{Chill2005} and~\cite[section~6]{Arendt2007} for more details. Actually, as explained in Remark~\ref{remark-u0}, we will assume $u_0 \equiv 0$ for the sake of convenience. Note that since for $p>d$ we have $2/{p'} \in (1,2)$, the embedding $\WW^{2/{p'},p}(\Omega) \hookrightarrow \WW^{1,p}(\Omega)$ hold, and therefore~$\dot{u}_0$ is continuous on~$\Omega$. 
Recall that for $p>d$, the space $\WWW^{1,p}(\Omega)$ is an algebra. In particular, there exists a positive constant $C$ such that for all $A, \, B \in \WWW^{1,p}(\Omega)$, we have 
\begin{equation}
\|AB \|_{\WWW^{1,p}(\Omega)}  \leq 
C \| A \|_{\WWW^{1,p}(\Omega)} \|B  \|_{\WWW^{1,p}(\Omega)}.
\label{W-algebra}
\end{equation}
See for example~\cite[Lemma~A.1]{BB1974}. Therefore the operator $\WW^{2,p}(\Omega)\ni u \mapsto \cof(\I+\nabla u)$ has values in $\WW^{1,p}(\Omega)$. We refer to Lemma~\ref{lemma-cof} for Lipschitz properties of this operator. The volume right-hand-side will be considered as follows:
\begin{equation*}
f \in \mathcal{F}_{p,T}(\Omega) := \L^p(0,T;\LL^p(\Omega)).
\end{equation*}
Let us specify the space of the Neumann boundary condition. Following~\cite{Pruess2002}, the second equation of~\eqref{sysmain} is considered for
\begin{equation*}
g \in \mathcal{G}_{p,T}(\Gamma_N) := 
\L^p(0,T;\WW^{1-1/p,p}(\Gamma_N)) \cap
\W^{1/2-1/{(2p)},p}(0,T; \LL^p(\Gamma_N))
\end{equation*}
satisfying the compatibility condition $\displaystyle \kappa \frac{\p \dot{u}_0}{\p n} + \Sigma(0)n = g(\cdot,0)$. Indeed, when $p>3$ this space is embedded in~$\mathcal{C}([0,T]; \LL^p(\Gamma_N))$. We equip~$\mathcal{G}_{p,T}(\Gamma_N)$ with the following norm
\begin{equation*}
\|g\|_{\mathcal{G}_{p,T}(\Gamma_N)} :=
\|g \|_{\L^p(0,T;\WW^{1-1/p,p}(\Gamma_N))} 
+ \|g\|_{\W^{1/2-1/{(2p)},p}(0,T; \LL^p(\Gamma_N))}
+ \|g\|_{\L^{\infty}(0,T; \LL^p(\Gamma_N))}.
\end{equation*}
Besides, since we have $\cof(\I+\nabla u) n \in \W^{1,p}(0,T; \WW^{1-1/p,p}(\Gamma_N))$, we can deduce that for $p>3$ the pressure is continuous in time. We refer to section~\ref{sec-pressure} for more details on the regularity of~$\mathfrak{p}$. 
More specifically, the pressure~$\mathfrak{p}$ is considered in the following space
\begin{equation*}
\mathcal{P}_{p,T} : = \W^{1/2-1/(2p),p}(0,T;\R),
\end{equation*}
that we equip with the norm
\begin{equation*}
\|\mathfrak{p}\|_{\mathcal{P}_{p,T}} := 
\|\mathfrak{p}\|_{\W^{1/2-1/(2p),p}(0,T;\R)}
+\|\mathfrak{p}\|_{\L^{\infty}(0,T;\R)}.
\end{equation*}
In the same fashion, the space in which we will consider the global injectivity constraint (third equation of~\eqref{sysmain}) derived in time (see system~\eqref{sysmain2}) is denoted by
\begin{equation*}
\mathcal{H}_{p,T} := \W^{1-1/(2p),p}(0,T;\R),
\end{equation*}
and we equip it with the norm
\begin{equation*}
\| h \|_{\mathcal{H}_{p,T}} := \|h\|_{\W^{1-1/(2p),p}(0,T;\R)}
+\|h\|_{\L^{\infty}(0,T;\R)}.
\end{equation*}
Finally, recall the boundary trace embedding~\cite[Lemma~3.5]{Denk2006}, namely
\begin{equation}
\begin{array} {l}
\|v_{|\Gamma_N}\|_{\W^{1-1/(2p),p}(0,T;\LL^p(\Gamma_N))\cap\L^p(0,T;\WW^{2-1/p,p}(\Gamma_N))}
\leq C\|v \|_{\dot{\mathcal{U}}_{p,T}(\Omega)},
\end{array}
\label{est-trace-emb}
\end{equation}
where the constant $C>0$ is independent of~$T$. The same result also provides the following estimates on the normal derivative
\begin{equation}
\begin{array} {rcl}

\displaystyle
\left\|\frac{\p v}{\p n}\right\|_{\W^{1/(2p'),p}(0,T;\LL^p(\Gamma_N))}
& \leq & C\|v \|_{\dot{\mathcal{U}}_{p,T}(\Omega)},\\
\displaystyle
\left\|\frac{\p v}{\p n}\right\|_{\W^{1/(2p'),p}(0,T;\HH^{1/2-1/p}(\Gamma_N)')}
& \leq & C\|v \|_{\L^p(0,T;\HH^2(\Omega)) \cap \W^{1,p}(0,T;\LL^2(\Omega))}
\end{array}
\label{est-trace-emb2}
\end{equation}
where $C>0$ is again independent of~$T$. The second estimate is actually deduced directly by interpolation.

\subsection{Assumptions on the strain energy} \label{sec-notation-ass}
We introduce the deformation gradient tensor associated with a displacement field $u$ as follows:
\begin{equation*}
\Phi(u) = \I + \nabla u.
\end{equation*}
The strain energy of the elastic material is denoted by $\mathcal{W}$, and is assumed to be a function of the Green--Saint-venant strain tensor
\begin{equation*}
E(u) := \frac{1}{2}\left(\Phi(u)^T \Phi(u) - \I\right) = 
\frac{1}{2}\left((\I+\nabla u)^T(\I+\nabla u) - \I \right).
\end{equation*}
We denote classically~\cite{Ciarlet} by $\check{\Sigma}$ the differential of~$\mathcal{W}$:
\begin{equation*}
\check{\Sigma}(E) = \frac{\p \mathcal{W}}{\p E}(E).
\end{equation*}
In system~\eqref{sysmain}, the tensor $\Sigma$ is related to the strain energy $\mathcal{W}$ as
\begin{equation*}
\Sigma(u) := \frac{\p \mathcal{W}}{\p E}(E(u)).
\end{equation*}
The derivation of system~\eqref{sysmain} from $\mathcal{W}$ is given in section~\ref{sec-derivPDE}. We make the following assumptions on~$\mathcal{W}$:
\begin{description}
\item[$\mathbf{A1}$] 
The Nemytskii operator $\mathcal{W}: \WWW^{1,p}(\Omega) \ni E \mapsto \mathcal{W} (E) \in \R$ is twice continuously Fr\'echet-differentiable.
\item[$\mathbf{A2}$] For all symmetric matrix $E \in \R^{d\times d}$, the matrix~$\check{\Sigma}(E)$ is symmetric too.
\end{description}
In section~\ref{sec-strain} we show that well-known examples of strain energies from the literature fulfill this assumption. From section~\ref{sec-well} we will use the notation
\begin{equation*}
\sigma(\nabla u) := (\I + \nabla u)\Sigma(u) = 
 (\I + \nabla u) \frac{\p \mathcal{W}}{\p E}(E(u)).
\end{equation*}
Note that the tensor $\sigma$ is a function of $\nabla u$ only, since the strain energy is chosen to be a function of he Green -- St-Venant strain tensor $E(u) = \displaystyle \frac{1}{2}(\nabla u + \nabla u^T + \nabla u^T \nabla u )$. Assumption~$\mathbf{A1}$ implies that~$\sigma$ is locally Lipschitz from~$\WWW^{1,p}(\Omega)$ to~$\WWW^{1,p}(\Omega)$. This is useful for deriving in Lemma~\ref{lemma-sigmax} Lipschitz estimates on the term $\sigma(\nabla u)$. For our purpose, namely proving Theorem~\ref{th-wellposed}, it would be sufficient to assume that~$\mathcal{W}$ is only of class~$\mathcal{C}^{1,\delta}$ for some~$\delta\in(0,1)$, implying that~$\sigma$ is~$\delta$-H\"{o}lder, but in view of the examples given in section~\ref{sec-strain}, it is reasonable to simply consider Assumption~$\mathbf{A1}$, avoiding to deal with pointless technicalities.

Therefore Assumption~$\mathbf{A1}$ allows us to calculate the derivative of~$\sigma$, denoted by~$\sigma'(\nabla u) \in \mathscr{L}(\WWW^{1,p}(\Omega);\WWW^{1,p}(\Omega))$ at point~$u$, as follows
\begin{equation}
\sigma'(\nabla u).(\nabla v)  = 
\displaystyle(\nabla v) \Sigma(u) + 
\Phi(u)\left(\frac{\p^2 \mathcal{W}}{\p E^2}(E(u)).(E'(u).v) \right),
\label{sigmaprime}
\end{equation}
where $E'(u).v = \frac{1}{2} \left(\Phi(u)^T\nabla v + (\nabla v)^T \Phi(u) \right)$. Assumption~$\mathbf{A1}$ implies that the mapping~$ \WW^{2,p}(\Omega) \ni u \mapsto  \sigma'(\nabla u) \in \mathscr{L}(\WWW^{1,p}(\Omega);\WWW^{1,p}(\Omega))$ is continuous. Assumption~$\mathbf{A2}$ is used in section~\ref{sec-derivPDE} only.

\subsection{Examples of strain energies} \label{sec-strain}
Let us show that the assumptions~$\mathbf{A1}$-$\mathbf{A2}$ are satisfied by classical elasticity models.

\paragraph{The Saint Venant-Kirchhoff's model.} It corresponds to the following strain energy
\begin{equation*}
\mathcal{W}_1(E) = \mu_L \trace\left( E^2 \right) + \frac{\lambda_L}{2} \trace(E)^2,
\end{equation*}
where $\mu_L >0$ and $\lambda_L \geq 0$ are the so-called Lam\'e coefficients. The energy is clearly twice differentiable, its first- and second-derivatives of $\mathcal{W}_1$ are given respectively by
\begin{equation*}
	\check{\Sigma}_1(E) = 2\mu_L E + \lambda_L \trace(E)\I, \qquad
	\frac{\p \check{\Sigma}_1}{\p E}(E)  =  2\mu_L \III + \lambda_L \I \otimes \I,
\end{equation*}
where $\III \in \R^{d\times d \times d \times d}$ denotes the identity tensor of order~$4$. In particular, we see that if the matrix~$E$ is symmetric, then~$\check{\Sigma}_1$ defines a symmetric matrix.

\paragraph{The Fung's model.} It corresponds to the following strain energy
\begin{equation*}
	\mathcal{W}_2(E) = \mathcal{W}_2(0) + \beta \left(\exp\left(\gamma \ \trace(E^2)\right) - 1\right),
\end{equation*}
where $\mathcal{W}_2(0) \geq 0$, $\beta >0$ and $\gamma > 0$ are given coefficients. The space $\WWW^{1,p}(\Omega)$ is invariant under composition of the exponential function when $p>d$ (see~\cite{BB1974}, Lemma~A.2. page 359). The first- and second-derivatives of $\mathcal{W}_2$ are given respectively by
\begin{equation*}
	\check{\Sigma}_2(E)  =  2\gamma\beta \exp\left(\gamma \ \trace(E^2) \right) E, \qquad
	\frac{\p \check{\Sigma}_2}{\p E}(E)  =  \beta \exp\left(\gamma \ \trace(E^2) \right)
	\left( 2\gamma \III + (2\gamma)^2 E \otimes E\right).
\end{equation*}
Again, if~$E$ is symmetric, then~$\check{\Sigma}_2$ is symmetric.

\paragraph{The Ogden's model.} The family of strain energies corresponding to this model are linear combinations of energies of the following form
\begin{equation*}
	\mathcal{W}_3(E)  =   \trace\left((2E+\I)^{\gamma} - \I \right),
\end{equation*}
where $\gamma \in \R$. Since the tensor $2E+\I$ is real and symmetric, the expression $(2E+\I)^{\beta} $ makes sense for any $\beta \in \R$ by diagonalizing~$2E+\I$, and the energy $\mathcal{W}_3(E)$ can be expressed in terms of the eigenvalues of $2E+\I$. Since $2E(u)+\I = (\I+\nabla u)^T(\I+\nabla u)$, if $(\lambda_i)_{1\leq i \leq d}$ denote the singular values of $\I+\nabla u$, and $(\mu_i)_{1\leq i \leq d}$ denote those of $E(u)$, we have
\begin{equation*}
	\mathcal{W}_3(E)  =  \sum_{i=1}^d \left(\lambda_i^{2\gamma}  -1\right)
	= \sum_{i=1}^d \left((1+2\mu_i)^\gamma -1\right), \qquad
	\check{\Sigma}_3(E) =  2\gamma (2E+\I)^{\gamma-1}.
\end{equation*}
Denoting by $(v_l)_{1\leq l\leq d}$ the normalized orthogonal eigenvectors of $E$, we further write
\begin{equation*}
\check{\Sigma}_3(E)  = \sum_{i=1}^d 2\gamma(2\mu_i+1)^{\gamma-1}v_i \otimes v_i.
\end{equation*}
Note that the operator $v_i \otimes v_i$ is the projection on $\mathrm{Span}(v_i)$, and Assumption~$\mathbf{A2}$ is satisfied by~$\check{\Sigma}_3$. The sensitivity of the eigenvalues and eigenvectors with respect to the matrix can be derived for example from~\cite{Stewart} (see Theorem~IV.2.3 page~183, and Remark~2.9 page 239). Thus after calculations we get
\begin{equation*}
\begin{array} {rcl}
\displaystyle \frac{\p \check{\Sigma}_3}{\p E}(E) & = & \displaystyle 2\gamma
\sum_{i=1}^d 2(\gamma-1)(2\mu_i+1)^{\gamma-2} (v_i\otimes v_i) \otimes (v_i\otimes v_i) \displaystyle \\
& & + \displaystyle  2\gamma\sum_{i=1}^d (2\mu_i+1)^{\gamma-1} 
\sum_{j\neq i} \frac{1}{\mu_i-\mu_j}(v_j\otimes v_i) \otimes (v_j\otimes v_i) .
\end{array}
\end{equation*}
This expression shows that the strain energy $\mathcal{W}_3$ fulfills also Assumption~$\mathbf{A1}$.

\section{On the linearized system} \label{sec-well}

Let be $T>0$. The goal of this section is to study system~\eqref{sysmain} linearized around $(\dot{u},\mathfrak{p}) = (0,0)$. Note that the global injectivity constraint and the homogeneous Dirichlet condition on~$\Gamma_D$, namely the third and fourth equations of system~\eqref{sysmain} respectively, can be derived in time, leading to the following system (see section~\ref{sec-modeling1} for more details):
\begin{equation}\label{sysmain2}
\begin{array} {rcl} 
\ddot{u} - \kappa \Delta \dot{u}
- \divg \sigma(\nabla u) = f & & \text{in $\Omega \times(0,T)$},  \\
\displaystyle \kappa \frac{\p \dot{u}}{\p n} + \sigma(\nabla u)n 
 + \mathfrak{p}\, \cof\left(\Phi(u) \right) n = g & & \text{on $\Gamma_N\times(0,T)$}, 
\\
\displaystyle \int_{\Gamma_N} \dot{u} \cdot \cof\left(\Phi(u) \right)n\, \d\Gamma_N = 0 & & \text{in } (0,T) \\
\dot{u} = 0 & & \text{on $\Gamma_0\times(0,T)$},  \\
u(\cdot, 0) = 0, \displaystyle \quad \dot{u}(\cdot, 0) = \udotini & & \text{in $\Omega$}.
\end{array}
\end{equation}
In what follows, the variable $v$ will play the role of $\dot{u}$, and therefore we will consider
\begin{equation*}
v \in \dot{\mathcal{U}}_{p,T}(\Omega) = 
\L^p(0,T;\WW^{2,p}(\Omega)\cap \WW^{1,p}_{0,D}(\Omega)) \cap \W^{1,p}(0,T;\LL^p(\Omega)).
\end{equation*}
We rewrite~\eqref{sysmain2} in terms of $v$, by splitting the linear part and the remainder as follows
\begin{equation} \label{sysmain3}
\begin{array}{rcl}
\dot{v} - \kappa \Delta v
= f+\divg \sigma(\nabla u) & & \text{in $\Omega \times(0,T)$},  \\
\displaystyle \kappa \frac{\p v}{\p n} + \mathfrak{p}\, n = 
g - \sigma(\nabla u)n  + \mathfrak{p}\, (\I - \cof\left(\Phi(u) \right))n
 & & \text{on $\Gamma_N\times(0,T)$}, \\[5pt]
\displaystyle \int_{\Gamma_N} v \cdot n\, \d\Gamma_N = 
\int_{\Gamma_N} v \cdot \left(\I-\cof\left(\Phi(u) \right)\right)n\, \d\Gamma_N
 & & \text{in } (0,T) \\
v = 0 & & \text{on $\Gamma_0\times(0,T)$},  \\
\displaystyle u(\cdot, t) = \int_0^t v(\cdot,s)\d s, 
\displaystyle \quad v(\cdot, 0) = \udotini & & \text{in $\Omega$}.
\end{array}
\end{equation}
We will assume the compatibility conditions~$\displaystyle \kappa\frac{\p \dot{u}_0}{\p n} + \sigma(0)n = g(\cdot,0)$ on~$\Gamma_N$ and $\displaystyle \int_{\Gamma_N} \dot{u}_0 \cdot n\, \d \Gamma_N = 0$. We want to prove existence of solutions for system~\eqref{sysmain3} in the context of the $L^p$-maximal regularity~\cite{Denk2006}, namely solutions $v$ that lie in the space~$\dot{\mathcal{U}}_{p,T}(\Omega)$. Recall roughly the rigidity of this functional framework: The fact that if an operator owns the $L^p$-maximal regularity property for a given time exponent~$p\in (1,\infty)$ (namely the exponent~$p$ that appears in $\L^p(0,T)$ and $\W^{1,p}(0,T)$ spaces), then it also owns this property for any~$p\in(1,\infty)$. Further, this property is also independent of $T \in (0,\infty)$.\\ 
We rely on the main result of~\cite{Pruess2002}, stating maximal parabolic regularity in $\dot{\mathcal{U}}_{p,T}(\Omega)$ for a linear parabolic problem with mixed boundary conditions. Keep in mind that $p>3$. We recall and adapt this result to our context in Proposition~\ref{prop-Pruess}.
 
\begin{proposition} \label{prop-Pruess}
Assume that 
\begin{equation*}
F \in \mathcal{F}_{p,T}(\Omega), \quad 
G \in \mathcal{G}_{p,T}(\Gamma_N), \quad
v_0 \in \mathbf{W}^{2/{p'},p}(\Omega) \cap \mathbf{W}^{1/{p'},p}_{0,D}(\Omega),
\end{equation*}
with the compatibility condition $\displaystyle \kappa\frac{\p v_0}{\p n} = G(\cdot,0)$ on $\Gamma_N$. Then there exists a unique solution $v \in \dot{\mathcal{U}}_{p,T}(\Omega)$ to the following system
\begin{equation}
\left\{ \begin{array} {rcl}
\displaystyle \dot{v} - \kappa \Delta v = F & & \text{in } \Omega \times (0,T), \\
\displaystyle \kappa \frac{\p v}{\p n} = G & & \text{on } \Gamma_N \times (0,T), \\
v = 0 & & \text{on } \Gamma_D \times (0,T), \\
v(\cdot,0) = v_0 & & \text{in } \Omega.
\end{array} \right. \label{sys-Pruess}
\end{equation}
Moreover, there exists a constant $C(T)>0$ such that
\begin{equation}
\|v\|_{\dot{\mathcal{U}}_{p,T}(\Omega)}  \leq  C(T) \left(
\|v_0\|_{\mathbf{W}^{2/{p'},p}(\Omega)} + 
\|F\|_{\mathcal{F}_{p,T}(\Omega)} + 
\|G\|_{\mathcal{G}_{p,T}(\Gamma_N)}
\right).
\label{est-Pruess}
\end{equation}
In particular, the constant $C(T)$ is non-decreasing with respect to~$T$.
\end{proposition}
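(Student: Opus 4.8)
The plan is to reduce Proposition~\ref{prop-Pruess} to the abstract $L^p$-maximal regularity result for parabolic boundary value problems with mixed (Dirichlet/Neumann) boundary conditions established in~\cite{Pruess2002} (and in the companion form in~\cite{Denk2006}). First I would verify that the operator $-\kappa\Delta$ together with the boundary operators $v\mapsto \kappa\,\p_n v$ on $\Gamma_N$ and $v\mapsto v$ on $\Gamma_D$ forms a boundary value problem satisfying the Lopatinskii--Shapiro condition: this is standard for the Laplacian with these boundary operators, and parameter-ellipticity holds with angle of ellipticity $0$, so the system generates an analytic semigroup and, crucially, has the $L^p$-maximal parabolic regularity property. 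Since the domain $\Omega$ is smooth, the splitting $\overline{\Gamma_D}\cup\overline{\Gamma_N}=\p\Omega$ is into relatively open smooth pieces, and the normal $n$ has the assumed regularity $n\in\WW^{2-1/p,p}(\Gamma_N)$, the hypotheses of the cited theorems are met. The data spaces $\mathcal{F}_{p,T}(\Omega)$, $\mathcal{G}_{p,T}(\Gamma_N)$ and the trace space $\mathbf{W}^{2/p',p}(\Omega)\cap\mathbf{W}^{1/p',p}_{0,D}(\Omega)$ together with the compatibility condition $\kappa\,\p_n v_0 = G(\cdot,0)$ are exactly the natural data/compatibility spaces appearing there, so existence and uniqueness of $v\in\dot{\mathcal{U}}_{p,T}(\Omega)$ and the a priori estimate~\eqref{est-Pruess} follow directly. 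I would cite~\cite[]{Pruess2002} for the statement and note that the $\L^\infty$-in-time term in the norm of $\mathcal{G}_{p,T}(\Gamma_N)$ is harmless since that space embeds into $\mathcal{C}([0,T];\LL^p(\Gamma_N))$ for $p>3$.

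The second and more delicate point is the claim that the constant $C(T)$ can be chosen \emph{non-decreasing} in $T$. The abstract theorems give, for each fixed $T$, \emph{some} constant, but a priori with uncontrolled dependence on $T$; indeed for maximal regularity the constant is uniform on bounded time intervals but one must be slightly careful near $T=0$ because of the inhomogeneous boundary data and the trace term. The standard device I would use is the extension-and-restriction argument: given $T_1<T_2$, take a solution $v$ on $(0,T_1)$ with data $(v_0,F,G)$; extend $F$ by zero and $G$ by a fixed bounded extension operator $\mathcal{G}_{p,T_1}(\Gamma_N)\to\mathcal{G}_{p,T_2}(\Gamma_N)$ whose operator norm is bounded independently of $T_1\le T_2$ (such extensions exist for these Sobolev--Slobodeckii spaces, e.g. by even reflection across $t=T_1$ followed by a cutoff, respecting the compatibility condition at $t=0$), solve~\eqref{sys-Pruess} on $(0,T_2)$, and restrict back to $(0,T_1)$; by uniqueness this restriction is $v$. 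The estimate on $(0,T_2)$ with constant $C(T_2)$ then bounds $\|v\|_{\dot{\mathcal{U}}_{p,T_1}(\Omega)}$ by $C(T_2)$ times (a uniform multiple of) the $T_1$-norms of the data, which shows $C(T_1)\le C'\,C(T_2)$; absorbing the uniform multiplicative constant into the definition of $C(T)$ (equivalently, redefining $C(T):=\sup_{0<s\le T}C_0(s)$ with $C_0$ the original constant) yields a non-decreasing constant. I would spell this out briefly.

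The main obstacle I anticipate is precisely this monotonicity-in-$T$ bookkeeping: one needs an extension operator for the Besov/Slobodeckii-in-time data space $\mathcal{G}_{p,T}(\Gamma_N)$ that is bounded uniformly as $T\to 0$ and compatible with the trace/compatibility condition $\kappa\,\p_n v_0=G(\cdot,0)$, so that the extended problem still has admissible data. The clean way is to first reduce to zero initial data: subtract off a fixed lifting $w\in\dot{\mathcal{U}}_{p,T}(\Omega)$ of $v_0$ (constructed once and for all on a time interval containing all relevant $T$, e.g. via the semigroup, with norm controlled by $\|v_0\|_{\mathbf{W}^{2/p',p}(\Omega)}$ uniformly in $T$), so that the reduced data $\tilde G=G-\kappa\,\p_n w$ vanishes at $t=0$; then odd or even reflection in time across $t=T$ gives a uniformly bounded extension and the argument closes. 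All of these are routine facts about the relevant function spaces, so the proof is short; I would present it as: (i) invoke~\cite{Pruess2002} for existence, uniqueness and~\eqref{est-Pruess} with some $C(T)$; (ii) reduce to $v_0=0$ by a uniform lifting; (iii) extend in time and restrict to obtain monotonicity.
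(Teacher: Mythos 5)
Your core approach coincides with the paper's: the proof in the paper is a single sentence, \emph{``This is a consequence of \cite[Theorem~4.3]{Pruess2002}''}, and you likewise reduce everything to the $L^p$-maximal-regularity result of~\cite{Pruess2002} for parabolic problems with mixed boundary conditions. Where you go further is in actually justifying the claim that $C(T)$ is non-decreasing in $T$: the paper asserts this without argument, whereas you supply a correct extension-and-restriction proof. The key subtleties you flag are real ones and you handle them properly --- one cannot naively extend a general $G\in\mathcal{G}_{p,T_1}(\Gamma_N)$ to $(0,T_2)$ with a $T_1$-uniform bound unless the datum is first normalized (otherwise the compatibility condition at $t=0$ and the fractional time-regularity are in tension), so the reduction to $v_0=0$ by a lifting $w$ whose norm in $\dot{\mathcal{U}}_{p,T}(\Omega)$ is controlled by $\|v_0\|_{\WW^{2/p',p}(\Omega)}$ uniformly for $T$ on bounded sets, followed by reflection of $\tilde G = G - \kappa\,\partial_n w$ (which vanishes at $t=0$), is exactly the clean device. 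Note also that the explicit inclusion of the $\L^\infty(0,T;\LL^p(\Gamma_N))$ term in the definition of $\|\cdot\|_{\mathcal{G}_{p,T}(\Gamma_N)}$ is precisely what makes the restricted norm $\|G\|_{\mathcal{G}_{p,T_1}(\Gamma_N)}\le\|G\|_{\mathcal{G}_{p,T_2}(\Gamma_N)}$ behave monotonically under restriction without any $T$-dependent embedding constant, which is consonant with your remark and in fact the reason that term appears in the paper's definition. In short: same proof of the existence and estimate, plus a correct and worthwhile fleshing-out of the monotonicity claim the paper leaves implicit.
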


In the rest of the paper the notation~$C(T)$ will refer generically to a positive constant depending only on~$\Omega$, the exponent~$p$, the regularizing coefficient $\kappa$ and non-decreasingly on~$T$.

\begin{proof}
This is a consequence of~\cite[Theorem~4.3]{Pruess2002}.
\end{proof}

In order to introduce the pressure variable, we need the following result:

\begin{lemma} \label{lemma-const}
Assume $n\in \HH^{1/2}(\Gamma_N)$, and that $g \in \HH^{-1/2}(\Gamma_N)$ satisfies $\langle g ; \varphi \rangle_{\HH^{-1/2}(\Gamma_N);\HH^{1/2}(\Gamma_N)} = 0$ for all $\varphi \in \HH^{1/2}(\Gamma_N)$ such that $\displaystyle \langle \varphi ; n \rangle_{\LL^2(\Gamma_N)} = 0$. 
Then there exists $\mathfrak{p}\in \R$ such that $g= \mathfrak{p}\, n$ in $\HH^{1/2}(\Gamma_N)$.
\end{lemma}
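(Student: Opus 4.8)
The plan is to recognize this as the elementary fact that a linear functional annihilating the kernel of a nonzero linear functional must be a scalar multiple of it. First I would introduce the functional $\ell$ on $\HH^{1/2}(\Gamma_N)$ defined by $\ell(\varphi) := \langle \varphi ; n \rangle_{\LL^2(\Gamma_N)}$. It is well defined and continuous, since $\HH^{1/2}(\Gamma_N)$ embeds continuously into $\LL^2(\Gamma_N)$ and $n \in \LL^2(\Gamma_N)$ (indeed $n \in \HH^{1/2}(\Gamma_N)$ by assumption). Moreover $\ell$ is precisely the image of $n$ under the canonical chain $\HH^{1/2}(\Gamma_N) \hookrightarrow \LL^2(\Gamma_N) \hookrightarrow \HH^{-1/2}(\Gamma_N)$, using the identification $\HH^{1/2}(\Gamma_N)' = \HH^{-1/2}(\Gamma_N)$, which is available because $\Gamma_N$ is assumed closed. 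Thus proving $g = \mathfrak{p}\, n$ in $\HH^{-1/2}(\Gamma_N)$ is the same as proving $g = \mathfrak{p}\, \ell$ as elements of $\HH^{1/2}(\Gamma_N)'$.

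Second, I would observe that $\ell \neq 0$. Since $n$ is the outward unit normal, $|n(x)|_{\R^d} = 1$ for a.e.\ $x\in\Gamma_N$, so $\ell(n) = \|n\|_{\LL^2(\Gamma_N)}^2 = |\Gamma_N| > 0$ by the standing assumption $|\Gamma_N|>0$. Hence $n \notin \ker \ell$, the kernel $\ker\ell$ is a closed hyperplane, and $\HH^{1/2}(\Gamma_N) = \R\, n \oplus \ker\ell$. The hypothesis on $g$ reads exactly $\ker\ell \subseteq \ker g$. Setting $\mathfrak{p} := \langle g ; n \rangle_{\HH^{-1/2}(\Gamma_N);\HH^{1/2}(\Gamma_N)} / |\Gamma_N| \in \R$, I would take an arbitrary $\varphi \in \HH^{1/2}(\Gamma_N)$, decompose it as $\varphi = \frac{\ell(\varphi)}{|\Gamma_N|}\, n + w$ with $w := \varphi - \frac{\ell(\varphi)}{|\Gamma_N|}\, n \in \ker\ell \subseteq \ker g$, and compute $\langle g ; \varphi \rangle = \frac{\ell(\varphi)}{|\Gamma_N|}\langle g ; n \rangle = \mathfrak{p}\,\ell(\varphi) = \langle \mathfrak{p}\, n ; \varphi \rangle$. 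Since $\varphi$ is arbitrary this gives $g = \mathfrak{p}\, n$ in $\HH^{-1/2}(\Gamma_N)$; as the right-hand side belongs to $\HH^{1/2}(\Gamma_N)$, the identity also holds in $\HH^{1/2}(\Gamma_N)$, which is what is claimed.

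I do not expect a genuine obstacle here: the only points needing a word of care are the continuity and the correct identification of $\ell$ with $n$ in the duality pairing (which relies on $\Gamma_N$ being closed, so that $\HH^{1/2}(\Gamma_N)' = \HH^{-1/2}(\Gamma_N)$), and the non-degeneracy $\ell(n) = |\Gamma_N| > 0$ that legitimizes the splitting $\HH^{1/2}(\Gamma_N) = \R\, n \oplus \ker\ell$. The remainder is the standard one-line quotient argument.
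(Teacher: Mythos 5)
Your proposal is correct and follows essentially the same route as the paper's own proof: you define $\mathfrak{p} = \frac{1}{|\Gamma_N|}\langle g ; n\rangle$, decompose an arbitrary test function into its component along $n$ plus a remainder lying in $\ker\ell$, and invoke the hypothesis to kill the remainder. The only cosmetic difference is that you state the underlying linear-algebra principle (a functional vanishing on $\ker\ell$ is a multiple of $\ell$) and the nondegeneracy $\ell(n)=|\Gamma_N|>0$ explicitly, whereas the paper carries out the same one-line computation without naming it.
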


\begin{proof}
We set $\mathfrak{p} = \displaystyle \frac{1}{|\Gamma_N|}\langle g ; n\rangle_{\HH^{-1/2}(\Gamma_N),\HH^{1/2}(\Gamma_N)}
$, and for all $\varphi \in \HH^{1/2}(\Gamma_N)$ we calculate
\begin{eqnarray*}
\langle g -\mathfrak{p}\, n ; \varphi \rangle_{\HH^{-1/2}(\Gamma_N),\HH^{1/2}(\Gamma_N)} & = & \langle g ; \varphi \rangle_{\HH^{-1/2}(\Gamma_N),\HH^{1/2}(\Gamma_N)} - \mathfrak{p}\langle \varphi; n \rangle_{\LL^2(\Gamma_N)} \\
& = & \langle g ; \varphi \rangle_{\HH^{-1/2}(\Gamma_N),\HH^{1/2}(\Gamma_N)} - \frac{1}{|\Gamma_N|} 
\langle g ; n \rangle_{\HH^{-1/2}(\Gamma_N),\HH^{1/2}(\Gamma_N)}
\langle \varphi ; n \rangle_{\LL^2(\Gamma_N)} \\
& = & \left\langle g ; \varphi - \frac{1}{|\Gamma_N|} \langle \varphi ; n \rangle_{\LL^2(\Gamma_N)} n \right\rangle_{\HH^{-1/2}(\Gamma_N),\HH^{1/2}(\Gamma_N)}.
\end{eqnarray*}
Since $\tilde{\varphi} := \displaystyle  \varphi - \frac{1}{|\Gamma_N|} \langle \varphi ; n \rangle_{\LL^2(\Gamma_N)} n$ satisfies $\langle \tilde{\varphi} ; n \rangle_{\LL^2(\Gamma_N)} = 0$, by assumption we deduce $\langle g -\mathfrak{p}\, n ; \varphi \rangle_{\HH^{-1/2}(\Gamma_N),\HH^{1/2}(\Gamma_N)} = 0$ for all $\varphi\in \HH^{1/2}(\Gamma_N)$, namely $g-\mathfrak{p}\, n = 0$ in $\HH^{-1/2}(\Gamma_N)$, and thus $g=\mathfrak{p}\, n \in \HH^{1/2}(\Gamma_N)$, which completes the proof.
\end{proof}

We deduce the same result as Proposition~\ref{prop-Pruess} when the solution needs to satisfy an additional constraint.

\begin{proposition} \label{prop-pressure}
Given the assumptions of Proposition~\ref{prop-Pruess}, there exists a unique couple $(v,\mathfrak{p}) \in \dot{\mathcal{U}}_{p,T}(\Omega) \times \mathcal{P}_{p,T}$ to the following system:
\begin{subequations}\label{sys-Pruess-constraint}
\begin{eqnarray}
\displaystyle \dot{v} - \kappa \Delta v = F & & \text{in } \Omega \times (0,T), 
\label{sys-P-1}\\
\displaystyle \kappa \frac{\p v}{\p n} +\mathfrak{p}\, n = G & & \text{on } \Gamma_N \times (0,T), 
\label{sys-P-2}\\
\displaystyle \int_{\Gamma_N} v\cdot n \, \d \Gamma_N = 0 & & \text{in } (0,T), 
\label{sys-P-const}\\
v = 0 & & \text{on } \Gamma_D \times (0,T), \\
v(\cdot,0) = v_0 & & \text{in } \Omega.
\end{eqnarray}
\end{subequations}
It satisfies:
\begin{equation}
\|v\|_{\dot{\mathcal{U}}_{p,T}(\Omega)} + 
\|v\|_{\L^{\infty}(0,T;\LL^p(\Gamma_N))} +
\| \mathfrak{p}\|_{\mathcal{P}_{p,T}} 
 \leq  C(T) \left(
\|v_0\|_{\mathbf{W}^{2/{p'},p}(\Omega)} + 
\|F\|_{\mathcal{F}_{p,T}(\Omega)} + 
\|G\|_{\mathcal{G}_{p,T}(\Gamma_N)}
\right),
\label{est-Pruess2}
\end{equation}
where $C(T)>0$ denotes a generic constant which is non-decreasing with respect to~$T$.
\end{proposition}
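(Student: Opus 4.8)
The plan is to reduce system~\eqref{sys-Pruess-constraint} to the unconstrained system~\eqref{sys-Pruess} of Proposition~\ref{prop-Pruess} by decomposing the Neumann datum. First I would observe that if a solution $(v,\mathfrak{p})$ exists, then on each time slice the trace of $v$ on $\Gamma_N$ must satisfy the scalar constraint $\int_{\Gamma_N} v\cdot n\,\d\Gamma_N = 0$; dually, the extra term $\mathfrak{p}\,n$ in~\eqref{sys-P-2} is precisely a one-dimensional Lagrange multiplier, consistent with Lemma~\ref{lemma-const}, which tells us that the only boundary data orthogonal (in the $\HH^{-1/2}$--$\HH^{1/2}$ duality) to all test functions with vanishing flux are the multiples of $n$. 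So the natural strategy is: split $G = G_0 + \mathfrak{p}\,n$ where $\mathfrak{p}(t)$ is chosen so that the problem with datum $G_0$ produces a $v$ whose flux vanishes.

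Concretely, I would introduce the linear solution operator. Let $w \in \dot{\mathcal{U}}_{p,T}(\Omega)$ be the unique solution given by Proposition~\ref{prop-Pruess} with data $(v_0, F, G)$, and let $z \in \dot{\mathcal{U}}_{p,T}(\Omega)$ be the unique solution with data $(0, 0, -n)$ (here $-n \in \mathcal{G}_{p,T}(\Gamma_N)$ since $n$ is time-independent and lies in $\WW^{2-1/p,p}(\Gamma_N)$ by the geometric assumptions, so the compatibility condition at $t=0$ holds trivially once we also shift $v_0$; more precisely one first solves with the chosen $\mathfrak p(0)$ so that compatibility is respected, which is why we need $\kappa\p v_0/\p n = G(\cdot,0)$ already in the hypotheses and the constraint on $v_0$). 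Then set
\begin{equation*}
m(t) := \int_{\Gamma_N} z(\cdot,t)\cdot n\,\d\Gamma_N, \qquad b(t) := \int_{\Gamma_N} w(\cdot,t)\cdot n\,\d\Gamma_N .
\end{equation*}
The candidate solution is $v := w + \mathfrak{p}\, z$ with $\mathfrak{p}(t) := -b(t)/m(t)$, provided $m(t)$ never vanishes. Uniqueness follows because the difference of two solutions solves~\eqref{sys-Pruess} with zero data except the multiplier term $(\mathfrak p_1-\mathfrak p_2)n$, and pairing the Neumann condition against the corresponding $v_1-v_2$ (which has vanishing flux) forces, via Lemma~\ref{lemma-const} applied pointwise in time, $\mathfrak p_1 = \mathfrak p_2$ and hence $v_1 = v_2$.

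The main obstacle is twofold: showing $m(t)\neq 0$ with a lower bound uniform enough to control $1/m$, and then establishing the regularity $\mathfrak{p} \in \mathcal{P}_{p,T} = \W^{1/2-1/(2p),p}(0,T;\R)$ together with estimate~\eqref{est-Pruess2}. For non-degeneracy I would argue that $z$ is, up to the exponential decay of the heat semigroup, close to the time-independent solution $\zeta$ of the stationary problem $-\kappa\Delta\zeta = 0$ in $\Omega$, $\kappa\p\zeta/\p n = -n$ on $\Gamma_N$, $\zeta = 0$ on $\Gamma_D$; testing that equation with $\zeta$ gives $\kappa\|\nabla\zeta\|_{\LL^2(\Omega)}^2 = -\int_{\Gamma_N}\zeta\cdot n$, so $\int_{\Gamma_N}\zeta\cdot n = -\kappa\|\nabla\zeta\|^2 < 0$ (nonzero since $\zeta\not\equiv 0$, as $n\not\equiv 0$ on $\Gamma_N$ and $|\Gamma_D|>0$). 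A continuity-in-time argument on a short interval, or more robustly a direct sign argument, then yields $|m(t)| \geq c > 0$; since the construction in Proposition~\ref{prop-Pruess} is on a fixed $T$ and $C(T)$ is monotone, $c$ can be taken depending only on $\Omega,p,\kappa$ and (monotonically) on $T$. For the regularity and the bound, I note $b(t) = \int_{\Gamma_N} w\cdot n\,\d\Gamma_N$ with $w_{|\Gamma_N} \in \W^{1-1/(2p),p}(0,T;\LL^p(\Gamma_N))\cap\L^p(0,T;\WW^{2-1/p,p}(\Gamma_N))$ by~\eqref{est-trace-emb}; integrating a scalar product against the fixed function $n$ over the fixed boundary is a bounded linear functional into $\W^{1-1/(2p),p}(0,T;\R) \hookrightarrow \W^{1/2-1/(2p),p}(0,T;\R)$ with $T$-monotone (indeed $T$-independent) constant, and $\W^{1/2-1/(2p),p}(0,T;\R)\hookrightarrow\mathcal C([0,T])$ since $(1/2-1/(2p))p > 1$ for $p>3$, giving also the $\L^\infty$ part of the $\mathcal P_{p,T}$-norm. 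The same holds for $m$; then $\mathfrak p = -b/m$ with $\|1/m\|_{\L^\infty} \leq 1/c$ and, using that $\W^{1/2-1/(2p),p}(0,T;\R)\cap\L^\infty$ is a Banach algebra (product of two functions in a fractional Sobolev space that embeds in $\mathcal C$), one obtains $\|\mathfrak p\|_{\mathcal P_{p,T}} \leq C(T)\|b\|_{\mathcal P_{p,T}}$. Chaining these with~\eqref{est-Pruess} for both $w$ and $z$ (the $z$-contribution being an absolute constant absorbed into $C(T)$) and the trace estimate~\eqref{est-trace-emb} for $\|v\|_{\L^\infty(0,T;\LL^p(\Gamma_N))}$ delivers~\eqref{est-Pruess2}. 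The only genuinely delicate point to write out carefully is the uniform lower bound on $|m(t)|$ with the correct $T$-dependence; everything else is bookkeeping with the embeddings already recorded in section~\ref{sec-assgeo}.
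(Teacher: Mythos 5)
There is a genuine gap in the existence part of your construction: the ansatz $v = w + \mathfrak{p}(t)\,z$ with a \emph{time-dependent} multiplier does not satisfy the heat equation~\eqref{sys-P-1}. Indeed, computing the time derivative gives
\begin{equation*}
\dot{v} - \kappa\Delta v = \big(\dot{w}-\kappa\Delta w\big) + \mathfrak{p}(t)\big(\dot{z}-\kappa\Delta z\big) + \dot{\mathfrak{p}}(t)\,z = F + \dot{\mathfrak{p}}(t)\,z,
\end{equation*}
and the spurious term $\dot{\mathfrak{p}}(t)\,z$ does not vanish unless $\mathfrak{p}$ is constant in time, which the constraint does not allow. (The same defect survives if one replaces the evolutionary $z$ by the stationary lifting $\zeta$, since $\dot{\mathfrak{p}}\,\zeta$ remains.) Superposition with time-varying coefficients is not compatible with the evolution operator: because of the memory of the heat semigroup, the correct reduction of the constraint $\int_{\Gamma_N}v\cdot n\,\d\Gamma_N = 0$ to an equation for $\mathfrak{p}$ is a \emph{Volterra} integral equation in time, not the pointwise algebraic relation $\mathfrak{p}(t)=-b(t)/m(t)$. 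There is a second, smaller, issue with that formula: with the compatible data ($z(\cdot,0)=0$, $\int_{\Gamma_N}v_0\cdot n\,\d\Gamma_N = 0$), one has $m(0)=b(0)=0$, so the quotient is $0/0$ at $t=0$ and the uniform lower bound on $|m|$ you sketch via the stationary problem cannot hold near $t=0$.

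The paper takes a fundamentally different route: it first solves the constrained weak ($\L^2$-variational) problem directly in the subspace $\mathcal{V}(\Omega)=\{v\in\HH^1(\Omega):v_{|\Gamma_D}=0,\ \int_{\Gamma_N}v\cdot n\,\d\Gamma_N=0\}$, so that the constraint is never ``paid for'' with a correction term; Lemma~\ref{lemma-const} is then used \emph{a posteriori} to recover $\mathfrak{p}(t)$ from the Neumann residual, and the maximal-regularity independence in $p$ together with trace estimates~\eqref{est-trace-emb2} yield the regularity and bound for $\mathfrak{p}$, after which the full $\dot{\mathcal{U}}_{p,T}$ regularity of $v$ is obtained from Proposition~\ref{prop-Pruess} applied with Neumann datum $G-\mathfrak{p}\,n$. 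Your uniqueness sketch, on the other hand, is essentially right (an energy estimate using the constraint to kill the boundary term), and your intuition that $\mathfrak{p}$ is the multiplier enforcing zero flux, identified via Lemma~\ref{lemma-const}, is exactly what the paper uses; it is only the linear-superposition step that has to be abandoned or replaced by an argument in the constrained space (or by a Volterra resolution).
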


\begin{proof}
Define the following functional spaces and the operator $A$:
\begin{equation*}
\begin{array} {l}
 \mathcal{V}(\Omega) =  \left\{ v \in \HH^1(\Omega) \mid v_{|\Gamma_D} = 0, \ \displaystyle \int_{\Gamma_N} v\cdot n \, \d \Gamma_N = 0 \right\}, 
\quad \mathcal{V}(\Gamma_N) =  \left\{ v \in \HH^{1/2}(\Gamma_N) \mid \displaystyle \int_{\Gamma_N} v\cdot n \, \d \Gamma_N = 0 \right\}, \\[5pt]
\langle Av ; \varphi \rangle_{\mathcal{V}(\Omega)', \mathcal{V}(\Omega)} = \displaystyle
\kappa\int_{\Omega} \nabla v : \nabla \varphi \, \d \Omega.
\end{array}
\end{equation*}
From the Petree-Tartar lemma~\cite[Lemma A.38 page 469]{Ern}, it is standard to verify that $A$ is self-adjoint and accretive on $\mathcal{V}(\Omega)$. Therefore, following for instance~\cite[Chapter~7]{Evans}, there exists a unique $v\in \L^2(0,T;\mathcal{V}(\Omega)) \cap \H^1(0,T; \mathcal{V}'(\Omega))$ such that
\begin{equation*}
\langle \dot{v}; \varphi\rangle_{\mathcal{V}(\Omega)';\mathcal{V}(\Omega)} + 
\kappa\int_{\Omega} \nabla v : \nabla \varphi \, \d \Omega = 
\langle F; \varphi\rangle_{\mathcal{V}(\Omega)';\mathcal{V}(\Omega)}
+ \langle G; \varphi\rangle_{\mathcal{V}(\Gamma_N)';\mathcal{V}(\Gamma_N)}
\end{equation*}
for all $\varphi \in \mathcal{V}(\Omega)$, almost everywhere in $(0,T)$. After integration by parts, we obtain
\begin{equation*}
\dot{v} -\kappa \Delta v - F = 0  \quad \text{in } \mathcal{V}(\Omega)', 
\qquad \text{and} \qquad
\kappa \frac{\p v}{\p n} - G = 0 \quad \text{in } \mathcal{V}(\Gamma_N)'.
\end{equation*}
Then, from Lemma~\ref{lemma-const}, there exists $\mathfrak{p}:t\mapsto \mathfrak{p}(t)\in \R$ such that $\displaystyle \kappa \frac{\p v}{\p n} +\mathfrak{p}\, n = G$ almost everywhere in (0,T). Next, by deriving in time~\eqref{sys-P-const}, we deduce $\displaystyle \int_{\Gamma_N}\langle \dot{v}; n\rangle_{\HH^{-1/2}(\Gamma_N);\HH^{1/2}(\Gamma_N)}\, \d \Gamma_N = 0$, and since $F\in \L^2(0, T; \LL^2(\Omega))$, $G\in \L^2(0,T;\HH^{1/2}(\Gamma_N))$ and $v_0\in \WW^{2/{p'},p}(\Omega) \hookrightarrow \WW^{1,p}(\Omega) \hookrightarrow \HH^1(\Omega)$, by taking the scalar product of~\eqref{sys-P-1} by $\dot{v}$ and integrating by parts, we obtain
\begin{equation*}
\|\dot{v}\|^2_{\L^2(0,T;\LL^2(\Omega))} \leq
\frac{\kappa}{2}\|\nabla v_0\|^2_{\LL^2(\Omega)} +
\|F\|_{\L^2(0,T;\LL^2(\Omega))} \|\dot{v}\|_{\L^2(0,T;\LL^2(\Omega))}
+ \|G\|_{\L^2(0,T;\HH^{1/2}(\Gamma_N))} \|\dot{v}\|_{\L^2(0,T;\LL^2(\Omega))}.
\end{equation*}
The Young's inequality then shows that $\dot{v} \in \L^2(0,T;\LL^2(\Omega))$, and $-\kappa \Delta v = F-\dot{v}$ too. Therefore $v\in \L^2(0,T;\HH^2(\Omega)) \cap \H^1(0,T;\LL^2(\Omega))$. Thus the operator $A$ defined above enjoys the properties of the $L^p$-maximal regularity for $p=2$, and so for any $p>3$, that is
\begin{equation*}
\begin{array} {rcl}
& & v\in \L^p(0,T;\HH^2(\Omega)) \cap \W^{1,p}(0,T;\LL^2(\Omega)), \\
& & 
\| v\|_{\L^p(0,T;\HH^2(\Omega)) \cap \W^{1,p}(0,T;\LL^2(\Omega))} \leq 
C(T)\left(\|F\|_{\L^p(0,T;\LL^2(\Omega))}
+ \|G\|_{\L^p(0,T;\HH^{1/2}(\Gamma_N)) \cap \W^{1/(2p'),p}(0,T;\LL^2(\Gamma_N))}
 \right),
\end{array}
\end{equation*}
where the constant~$C(T)>0$ is, as previously mentioned, non-decreasing with respect to~$T$. Further, estimate~\eqref{est-trace-emb2} shows that $\displaystyle \frac{\p v}{\p n} \in \W^{1/(2p'),p}(0,T;\HH^{1/2-1/p}(\Gamma_N)')$, and enables us to obtain
\begin{equation}
\left\|\frac{\p v}{\p n} \right\|_{\W^{1/(2p'),p}(0,T;\HH^{1/2-1/p}(\Gamma_N)')} \leq 
C(T)\left(\|F\|_{\L^p(0,T;\LL^2(\Omega))}
+ \|G\|_{\L^p(0,T;\HH^{1/2}(\Gamma_N)) \cap \W^{1/(2p'),p}(0,T;\LL^2(\Gamma_N))}
 \right).
\label{est-normder}
\end{equation}
Further, we deduce from Lemma~\ref{lemma-DiNezza} with~$\gamma = 1/2-1/2p$, that is $\gamma-1/p = (1-3/p)/2 >0$, the following estimate
\begin{equation*}
\displaystyle
\left\|\frac{\p v}{\p n} \right\|_{\L^{\infty}(0,T;\HH^{1/2-1/p}(\Gamma_N)')} 
\leq
\displaystyle
\left\|\frac{\p v_0}{\p n} \right\|_{\HH^{1/2-1/p}(\Gamma_N)'} +
\displaystyle CT^{(1-3/p)/2}
\left\|\frac{\p v}{\p n} \right\|_{\W^{1/(2p'),p}(0,T;\HH^{1/2-1/p}(\Gamma_N)')}.
\end{equation*}
Since $v_0 \in \W^{2/p',p}(\Omega)$, we have $\displaystyle \frac{\p v_0}{\p n} \in \W^{1-3/p,p}(\Gamma_N) \hookrightarrow \LL^p(\Gamma_N) \hookrightarrow \HH^{1/2-1/p}(\Gamma_N)'$. Therefore we deduce
\begin{equation*}
\displaystyle
\left\|\frac{\p v}{\p n} \right\|_{\L^{\infty}(0,T;\HH^{1/2-1/p}(\Gamma_N)')} \leq 
C(T) \left(
\|v_0 \|_{\WW^{2/p',p}(\Omega)} +
\left\|\frac{\p v}{\p n} \right\|_{\W^{1/(2p'),p}(0,T;\HH^{1/2-1/p}(\Gamma_N)')}
\right)
\end{equation*}
which, combined with~\eqref{est-normder}, yields
\begin{eqnarray}
\displaystyle
\left\|\frac{\p v}{\p n} \right\|_{\L^{\infty}(0,T;\HH^{1/2-1/p}(\Gamma_N)')}
& \leq & C(T) \left(
\|v_0 \|_{\WW^{2/p',p}(\Omega)} +
\|F\|_{\L^p(0,T;\LL^2(\Omega))} \right. \nonumber \\
& & \left. 
+ \|G\|_{\L^p(0,T;\HH^{1/2}(\Gamma_N)) \cap \W^{1/(2p'),p}(0,T;\LL^2(\Gamma_N))}
\right).
\label{est-normder2}
\end{eqnarray}
Then from~\eqref{sys-P-2}, since $n \in \WW^{2-1/p,p}(\Gamma_N) \hookrightarrow \HH^{2-1/p}(\Gamma_N) \hookrightarrow \HH^{1/2-1/p}(\Gamma_N)$, we can deduce
\begin{equation*}
\begin{array} {l}
\mathfrak{p} = \displaystyle
 \frac{1}{|\Gamma_N|}\left(
\int_{\Gamma_N} G \cdot n \, \d \Gamma_N 
-\kappa\left\langle\frac{\p v}{\p n}; n\right\rangle_{\HH^{1/2-1/p}(\Gamma_N)',\HH^{1/2-1/p}(\Gamma_N)}
\right)
 \in \W^{1/2-1/(2p),p}(0,T;\R), 
\nonumber \\
\displaystyle
\| \mathfrak{p}\|_{\W^{1/(2p'),p}(0,T;\R)} \leq C\left(
\|G\|_{\W^{1/(2p'),p}(0,T;\LL^2(\Gamma_N))} + 
\left\|\frac{\p v}{\p n} \right\|_{\W^{1/(2p'),p}(0,T;\HH^{1/2-1/p}(\Gamma_N)')}
\right), \\
\| \mathfrak{p}\|_{\L^{\infty}(0,T;\R)} \leq C\left(
\displaystyle
\|G\|_{\L^{\infty}(0,T;\LL^2(\Gamma_N))} + 
\left\|\frac{\p v}{\p n} \right\|_{\L^{\infty}(0,T;\HH^{1/2-1/p}(\Gamma_N)')}
\right).
\end{array}
\end{equation*}
Combined with~\eqref{est-normder} and~\eqref{est-normder2}, these estimates yield
\begin{eqnarray}
\| \mathfrak{p}\|_{\mathcal{P}_{p,T}} & \leq & C(T)\left(
\|v_0\|_{\WW^{2/p',p}(\Omega)}+
\|F\|_{\L^p(0,T;\LL^2(\Omega))} \right. \nonumber \\
& & \left.+ \|G\|_{\L^p(0,T;\HH^{1/2}(\Gamma_N)) \cap \W^{1/(2p'),p}(0,T;\LL^2(\Gamma_N))}
 + \|G\|_{\L^{\infty}(0,T;\L^{2}(\Gamma_N))}
\right) \nonumber \\
& \leq & C(T)
\left(
\|v_0\|_{\WW^{2/p',p}(\Omega)} +
\|F\|_{\mathcal{F}_{p,T}(\Omega)} + \|G\|_{\mathcal{G}_{p,T}(\Gamma_N)}
 \right)
.\label{est-pression}
\end{eqnarray}
Then $\displaystyle \kappa \frac{\p v}{\p n} = G -\mathfrak{p}\, n \in \mathcal{G}_{p,T}(\Gamma_N)$ (since~$\mathfrak{p}$ does not depend on the space variable), and the existence of~$v$ in~$\dot{\mathcal{U}}_{p,T}(\Omega)$ follows from Proposition~\ref{prop-Pruess}. In particular, Lemma~\ref{lemma-DiNezza} with~$\gamma = 1-1/(2p)$, that is $\gamma-1/p = 1-3/(2p)>0$ enables us to obtain
\begin{eqnarray}
\|v\|_{\L^{\infty}(0,T;\LL^p(\Gamma_N))} & \leq & 
\|v_0\|_{\LL^p(\Gamma_N)} + 
CT^{1-3/(2p)}\|v\|_{\W^{1-1/(2p)}(0,T;\LL^p(\Gamma_N))}\nonumber \\
& \leq & C(T)\left(
\|v_0\|_{\W^{2/p',p}(\Omega)} +
\|v\|_{\dot{\mathcal{U}}_{p,T}(\Omega)}\right),
\label{est-Lp}
\end{eqnarray}
where we have used~\eqref{est-trace-emb}. Further, estimate~\eqref{est-Pruess} yields
\begin{equation*}
\begin{array} {rcl}
\|v\|_{\dot{\mathcal{U}}_{p,T}(\Omega)} & \leq &  C(T) \left(
\|v_0\|_{\mathbf{W}^{2/{p'},p}(\Omega)} + 
\|F\|_{\mathcal{F}_{p,T}(\Omega)} + 
\|G\|_{\mathcal{G}_{p,T}(\Gamma_N)} +
\|\mathfrak{p}\, n\|_{\mathcal{G}_{p,T}(\Gamma_N)}
\right) \\
& \leq &
 C(T) \left(
\|v_0\|_{\mathbf{W}^{2/{p'},p}(\Omega)} + 
\|F\|_{\mathcal{F}_{p,T}(\Omega)} + 
\|G\|_{\mathcal{G}_{p,T}(\Gamma_N)} +
\|\mathfrak{p}\|_{\mathcal{P}_{p,T}}
\right)
\end{array}
\end{equation*}
which, combined with~\eqref{est-pression} and~\eqref{est-Lp}, leads us to estimate~\eqref{est-Pruess2} and completes the proof.
\end{proof}

Note that the constraint~\eqref{sys-P-const} is the linearization of the third equation of~\eqref{sysmain3}. Therefore, we will need to consider~\eqref{sys-P-const} with a non-homogeneous right-hand-side.

\begin{corollary} \label{coro-linNH}
Given
\begin{equation*}
F \in \mathcal{F}_{p,T}(\Omega), \quad 
G \in \mathcal{G}_{p,T}(\Gamma_N), \quad
H \in \W^{1-1/{2p},p}(0,T;\R), \quad
v_0 \in \mathbf{W}^{2/{p'},p}(\Omega) \cap \mathbf{W}^{1/{p'},p}_{0,D}(\Omega),
\end{equation*}
with the compatibility condition $\displaystyle \kappa\frac{\p v_0}{\p n} = G(\cdot,0)$ on $\Gamma_N$ and $\displaystyle \int_{\Gamma_N} v_0\cdot n \, \d \Gamma_N = H(0)$, there exists a unique couple $(v,\mathfrak{p}) \in \dot{\mathcal{U}}_{p,T}(\Omega) \times \mathcal{P}_{p,T}$ solution to the following system:
\begin{subequations}\label{sys-Pruess-constraint-NH}
\begin{eqnarray}
\displaystyle \dot{v} - \kappa \Delta v = F & & \text{in } \Omega \times (0,T), \\
\displaystyle \kappa \frac{\p v}{\p n} +\mathfrak{p}\, n = G & & \text{on } \Gamma_N \times (0,T), \\
\displaystyle \int_{\Gamma_N} v\cdot n\, \d \Gamma_N = H & & \text{in } (0,T), 
\label{sys-P-const-NH}\\
v = 0 & & \text{on } \Gamma_D \times (0,T), \\
v(\cdot,0) = v_0 & & \text{in } \Omega.
\end{eqnarray}
\end{subequations}
It satisfies
\begin{equation}
\|v\|_{\dot{\mathcal{U}}_{p,T}(\Omega)} 
+ \| v\|_{\L^{\infty}(0,T;\LL^p(\Gamma_N))}
+ \| \mathfrak{p}\|_{\mathcal{P}_{p,T}} 
 \leq  C_0(T) \left(
\|v_0\|_{\mathbf{W}^{2/{p'},p}(\Omega)} + 
\|F\|_{\mathcal{F}_{p,T}(\Omega)} + 
\|G\|_{\mathcal{G}_{p,T}(\Gamma_N)} 
+ \|H\|_{\mathcal{H}_{p,T}}\right)
\label{est-Pruess3}
\end{equation}
where the constant $C_0(T)>0$ is non-decreasing with respect to $T$.
\end{corollary}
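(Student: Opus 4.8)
The plan is to reduce~\eqref{sys-Pruess-constraint-NH} to the homogeneous‑constraint system~\eqref{sys-Pruess-constraint}, already solved in Proposition~\ref{prop-pressure}, by subtracting an explicit lift of the boundary‑level datum $H$. First I would construct $v^\sharp \in \dot{\mathcal{U}}_{p,T}(\Omega)$ with $v^\sharp = 0$ on $\Gamma_D\times(0,T)$, $\int_{\Gamma_N} v^\sharp\cdot n\,\d\Gamma_N = H$ on $(0,T)$, and $\|v^\sharp\|_{\dot{\mathcal{U}}_{p,T}(\Omega)} + \|v^\sharp(\cdot,0)\|_{\WW^{2/p',p}(\Omega)} \le C(T)\,\|H\|_{\mathcal{H}_{p,T}}$. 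Since $n \in \WW^{2-1/p,p}(\Gamma_N)$ by the geometric assumptions and $|n|\equiv 1$ on $\Gamma_N$, the field $(x,t)\mapsto \tfrac{1}{|\Gamma_N|}H(t)\,n(x)$ belongs to the anisotropic boundary space $\W^{1-1/(2p),p}(0,T;\LL^p(\Gamma_N))\cap\L^p(0,T;\WW^{2-1/p,p}(\Gamma_N))$ — exactly the trace space appearing in~\eqref{est-trace-emb} — with norm bounded by $C\|H\|_{\mathcal{H}_{p,T}}$, and $\int_{\Gamma_N}\tfrac{1}{|\Gamma_N|}H(t)\,n\cdot n\,\d\Gamma_N = H(t)$. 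Because $\Gamma_N$ is a closed surface, disjoint from $\overline{\Gamma_D}$, a bounded right inverse of the boundary trace operator of $\dot{\mathcal{U}}_{p,T}(\Omega)$ (localized near $\Gamma_N$), as in~\cite{Denk2006}, produces such a $v^\sharp$; the control of $v^\sharp(\cdot,0)$ then follows from the trace‑at‑$t=0$ estimate for $\dot{\mathcal{U}}_{p,T}(\Omega)$, together with $\mathcal{H}_{p,T}=\W^{1-1/(2p),p}(0,T;\R)\hookrightarrow\mathcal{C}([0,T];\R)$, which holds since $p>3$ implies $(1-1/(2p))p>1$.

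Setting $\tilde v := v - v^\sharp$, system~\eqref{sys-Pruess-constraint-NH} for $(v,\mathfrak{p})$ becomes system~\eqref{sys-Pruess-constraint} for $(\tilde v,\mathfrak{p})$ with data $\tilde F := F-\dot{v^\sharp}+\kappa\Delta v^\sharp$, $\tilde G := G-\kappa\frac{\p v^\sharp}{\p n}$, $\tilde v_0 := v_0 - v^\sharp(\cdot,0)$, and with \emph{homogeneous} constraint, since $\int_{\Gamma_N}\tilde v\cdot n\,\d\Gamma_N = H - H = 0$. I would then check admissibility for Proposition~\ref{prop-pressure}: $\tilde F\in\mathcal{F}_{p,T}(\Omega)$ because $v^\sharp\in\dot{\mathcal{U}}_{p,T}(\Omega)$; $\tilde G\in\mathcal{G}_{p,T}(\Gamma_N)$ using~\eqref{est-trace-emb2} for the time regularity of $\frac{\p v^\sharp}{\p n}$, the spatial trace theorem $\WW^{2,p}(\Omega)\to\WW^{1-1/p,p}(\Gamma_N)$ for the gradient, and $\W^{1/(2p'),p}(0,T;\LL^p(\Gamma_N))\hookrightarrow\mathcal{C}([0,T];\LL^p(\Gamma_N))$ (again $p>3$) for the $\L^\infty$‑in‑time part; and $\tilde v_0\in\WW^{2/p',p}(\Omega)\cap\WW^{1/p',p}_{0,D}(\Omega)$. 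The two compatibility conditions transfer: $\kappa\frac{\p\tilde v_0}{\p n} = G(\cdot,0)-\kappa\frac{\p v^\sharp}{\p n}(\cdot,0) = \tilde G(\cdot,0)$ on $\Gamma_N$, and $\int_{\Gamma_N}\tilde v_0\cdot n\,\d\Gamma_N = H(0)-H(0) = 0$, using the hypotheses on $v_0$ and the construction of $v^\sharp$.

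Proposition~\ref{prop-pressure} then yields a unique $(\tilde v,\mathfrak{p})\in\dot{\mathcal{U}}_{p,T}(\Omega)\times\mathcal{P}_{p,T}$ satisfying~\eqref{est-Pruess2} with right‑hand side $\|\tilde v_0\|_{\WW^{2/p',p}(\Omega)}+\|\tilde F\|_{\mathcal{F}_{p,T}(\Omega)}+\|\tilde G\|_{\mathcal{G}_{p,T}(\Gamma_N)}$, and $v:=\tilde v+v^\sharp$ solves~\eqref{sys-Pruess-constraint-NH}. Uniqueness is immediate, since the difference of two solutions solves the fully homogeneous problem, to which the uniqueness part of Proposition~\ref{prop-pressure} applies. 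To obtain~\eqref{est-Pruess3}, I would combine~\eqref{est-Pruess2} with $\|\tilde F\|_{\mathcal{F}_{p,T}}\le\|F\|_{\mathcal{F}_{p,T}}+C\|v^\sharp\|_{\dot{\mathcal{U}}_{p,T}}$, $\|\tilde G\|_{\mathcal{G}_{p,T}}\le\|G\|_{\mathcal{G}_{p,T}}+C(T)\|v^\sharp\|_{\dot{\mathcal{U}}_{p,T}}$, $\|\tilde v_0\|_{\WW^{2/p',p}}\le\|v_0\|_{\WW^{2/p',p}}+C(T)\|H\|_{\mathcal{H}_{p,T}}$, with the bound $\|v^\sharp\|_{\L^\infty(0,T;\LL^p(\Gamma_N))}\le\tfrac{1}{|\Gamma_N|}\|n\|_{\LL^p(\Gamma_N)}\|H\|_{\L^\infty(0,T;\R)}$ (coming directly from $v^\sharp_{|\Gamma_N}=\tfrac{1}{|\Gamma_N|}H\,n$) and with the lift estimate, all absorbed into one constant $C_0(T)$ non‑decreasing in $T$.

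The main obstacle I anticipate is producing the lift $H\mapsto v^\sharp$ with a constant that is genuinely non‑decreasing in $T$ and does not blow up as $T\to0$: this needs either a uniform bounded right inverse of the boundary trace operator on $(0,T)$ — which is delicate near $t=0$, where the generic lifting of trace data loses uniformity — or, equivalently, first extending $H$ off $(0,T)$ with controlled $\W^{1-1/(2p),p}$‑norm and lifting on the whole line. Everything else is routine verification of membership in $\mathcal{F}_{p,T}$, $\mathcal{G}_{p,T}$, of the compatibility conditions, and bookkeeping of the constants.
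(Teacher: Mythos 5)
Your overall strategy coincides with the paper's: subtract from $v$ a lift whose normal flux on $\Gamma_N$ reproduces $H$, so that the shifted unknown falls under Proposition~\ref{prop-pressure} with the homogeneous constraint~\eqref{sys-P-const}. The difference is in how that lift is manufactured, and it is precisely the point you flag at the end as the ``main obstacle.'' You invoke a generic bounded right inverse of the space-time trace operator onto $\W^{1-1/(2p),p}(0,T;\LL^p(\Gamma_N))\cap\L^p(0,T;\WW^{2-1/p,p}(\Gamma_N))$, and then worry -- correctly -- that the norm of such a right inverse, and also the trace-at-$t=0$ estimate you use to bound $\|v^\sharp(\cdot,0)\|_{\WW^{2/p',p}(\Omega)}$, may degenerate as $T\to 0$. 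The paper resolves this by making the right inverse explicit: first it picks a \emph{spatial} extension $\overline{H}_0\in\WW^{2/p',p}(\Omega)$ of $\frac{1}{|\Gamma_N|}H(0)\,n$ with $\|\overline{H}_0\|_{\WW^{2/p',p}(\Omega)}\le C\|H(0)\|_{\R}$ (a $T$-independent bound, since this is purely spatial), and then defines $\bar H$ as the solution of the \emph{Dirichlet} heat problem $\dot{\bar H}-\kappa\Delta\bar H=0$ in $\Omega\times(0,T)$, $\bar H = \frac{1}{|\Gamma_N|}Hn$ on $\Gamma_N$, $\bar H=0$ on $\Gamma_D$, $\bar H(\cdot,0)=\overline{H}_0$. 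The maximal-regularity constant for this Dirichlet problem, coming from~\cite{Pruess2002}, is non-decreasing in $T$, which is exactly the uniformity you needed; and by construction $\bar H(\cdot,0)=\overline{H}_0$, so no trace-at-$t=0$ argument is needed. As a side benefit, choosing the lift to be a caloric function means $\dot{\bar H}-\kappa\Delta\bar H=0$, so the volume source is unchanged ($\tilde F = F$), whereas your $v^\sharp$ produces the extra terms $-\dot{v^\sharp}+\kappa\Delta v^\sharp$; those are admissible but add bookkeeping. In short: your decomposition is right, but the crucial missing ingredient is the specific $T$-uniform construction of the lift, and the paper's heat-equation device is the standard way to get it.
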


\begin{proof}
Note that $H(0) \in \R$ does not depend on the space variable. Consider any extension~$\overline{H}_0 \in \WW^{2/{p'},p}(\Omega)$ of $\displaystyle  \frac{1}{|\Gamma_N|} H(0)n$ such that
\begin{equation*}
{\overline{H}_0}_{| \Gamma_N} = \frac{1}{|\Gamma_N|} H(0)n, \qquad
\| \overline{H}_0 \|_{\WW^{2/{p'},p}(\Omega)} 
\leq C\|H(0) \|_{\R}.
\end{equation*}
This is possible when $n \in \WW^{2/{p'}-1/p,p}(\Gamma_N) = \WW^{2-3/p,p}(\Gamma_N)$, which is the case because we assumed $n\in \WW^{2-1/p,p}(\Gamma_N)$ in section~\ref{sec-assgeo}. From $H$ we define an extension $\bar{H}$ by solving the following heat equation:
\begin{equation*}
\begin{array} {rcl}
\dot{\bar{H}}- \kappa\Delta \bar{H} = 0  & & \text{in } \Omega \times (0,T), \\
\bar{H} = \displaystyle \frac{1}{|\Gamma_N|}H n  
& &  \text{on } \Gamma_N \times (0,T), \\
\bar{H} = 0  & & \text{on } \Gamma_D \times (0,T), \\
\bar{H}(0) = \overline{H}_0 & &  \text{in } \Omega.
\end{array}
\end{equation*}
Since $H$ does not depend on the space variable and $n\in \W^{2-1/p,p}(\Gamma_N)$, we see easily that the Dirichlet condition on~$\Gamma_N$ satisfies
\begin{equation*}
\displaystyle \frac{1}{|\Gamma_N|}H n \in \W^{1-1/(2p),p}(0,T;\WW^{2-1/p,p}(\Gamma_N)) \hookrightarrow 
\W^{1-1/(2p),p}(0,T;\LL^p(\Gamma_N))\cap\L^p(0,T;\WW^{2-1/p,p}(\Gamma_N)),
\end{equation*}
and since the compatibility condition ${\overline{H}_0}_{|\Gamma_N} = \displaystyle \frac{1}{|\Gamma_N|} H(0)n$ is satisfied, we derive from~\cite{Pruess2002} the following estimate
\begin{equation}
\|\bar{H}(0)\|_{\mathbf{W}^{2/{p'},p}(\Omega)}
+\kappa\left\|\frac{\p \bar{H}}{\p n}\right\|_{\mathcal{G}_{p,T}(\Gamma_N)} \leq
C\left( \| H \|_{\W^{1-1/{2p},p}(0,T;\R)} +
\|H(0)\|_{\R}
\right) \leq C \|H \|_{\mathcal{H}_{p,T}}.
\label{est-dirichlet-hbar}
\end{equation}
Now define $\bar{v} = v - \bar{H}$, which satisfies
\begin{equation*}
\begin{array} {rcl}
\displaystyle \dot{\bar{v}} - \kappa \Delta \bar{v} = F & & \text{in } \Omega \times (0,T), \\[5pt]
\displaystyle \kappa \frac{\p \bar{v}}{\p n} + \mathfrak{p}\, n = 
G - \kappa\frac{\p \bar{H}}{\p n} & & \text{on } \Gamma_N \times (0,T), \\[5pt]
\displaystyle \int_{\Gamma_N} \bar{v}\cdot n\, \d \Gamma_N = 0 & & \text{in } (0,T), 
\\
\bar{v} = 0 & & \text{on } \Gamma_D \times (0,T), \\
\bar{v}(\cdot,0) = v_0 - \bar{H}(0) & & \text{in } \Omega.
\end{array}
\end{equation*}
From Proposition~\ref{prop-pressure}, there exists a unique couple $(\bar{v},\mathfrak{p})\in \dot{\mathcal{U}}_{p,T}(\Omega) \times \mathcal{P}_{p,T}$ solution of the system above, and satisfying
\begin{equation*}
\begin{array} {rcl}
\|\bar{v}\|_{\dot{\mathcal{U}}_{p,T}(\Omega)} + 
\| \mathfrak{p} \|_{\mathcal{C}([0,T];\R)} 
& \leq & C \left(
\|v_0\|_{\mathbf{W}^{2/{p'},p}(\Omega)} + 
\|F\|_{\mathcal{F}_{p,T}(\Omega)} + 
\|G\|_{\mathcal{G}_{p,T}(\Gamma_N)} \right.\\
& & \left. + \displaystyle \|\bar{H}(0)\|_{\mathbf{W}^{2/{p'},p}(\Omega)}
+\kappa\left\|\frac{\p \bar{H}}{\p n}\right\|_{\mathcal{G}_{p,T}(\Gamma_N)}
\right).
\end{array}
\end{equation*}
Combining this estimate with~\eqref{est-dirichlet-hbar} enables us to conclude the proof.
\end{proof}

Estimate~\eqref{est-Pruess3} of Corollary~\ref{coro-linNH} is used for the fixed-point strategy in section~\ref{sec-well-main}.

\section{Local-in-time wellposedness for the main system} \label{sec-well-main} \label{sec-well-1}

System~\eqref{sysmain3} can be rewritten in the following form
\begin{equation*}
\begin{array} {rcl}
\displaystyle \dot{v} - \kappa \Delta v = f+ F(v) & & \text{in } \Omega \times (0,T), \\
\displaystyle \kappa \frac{\p v}{\p n} + \mathfrak{p}\, n = g+G(v) & & \text{on } \Gamma_N \times (0,T), \\[5pt]
\displaystyle \int_{\Gamma_N} v\cdot n\, \d \Gamma_N = H(v) & & \text{in } (0,T), 
\label{sys-P-const-NH2}\\
v = 0 & & \text{on } \Gamma_D \times (0,T), \\
v(\cdot,0) = \udotini & & \text{in } \Omega,
\end{array}
\end{equation*}
where we introduce
\begin{subequations} \label{NL-terms}
\begin{eqnarray}
u(\cdot,t) & = &  \int_0^t v(\cdot,s) \d s, \label{NL0} \\
F(v) & = & \divg\left( \sigma(\nabla u)\right), \label{NL1}\\
G(v,\mathfrak{p}) & = & -\sigma(\nabla u)n + \mathfrak{p}\left(\I - \mathrm{cof}(\Phi(u))\right)n, 
\label{NL2}\\
H(v) & = & \int_{\Gamma_N} v\cdot\left(\I - \cof(\Phi(u) )\right)n\, \d \Gamma_N.
\label{NL3}
\end{eqnarray}
\end{subequations}
Solutions of~\eqref{sysmain3} are fixed points of the following mapping
\begin{equation}
\begin{array} {rccl}
\mathcal{K} : & \dot{\mathcal{U}}_{p,T}(\Omega) \times  \mathcal{P}_{p,T} &
\rightarrow & \dot{\mathcal{U}}_{p,T}(\Omega) \times  \mathcal{P}_{p,T} \\
& (v_a,\mathfrak{p}_a) & \mapsto & (v_b,\mathfrak{p}_b)
\end{array}
\label{def-contraction}
\end{equation}
where $(v_b,\mathfrak{p}_b)$ is the solution of system~\eqref{sys-Pruess-constraint-NH} with $(F,G,H,v_0)$ replaced by $(f+F(v_a), g+G(v_a,\mathfrak{p}_a), H(v_a), \udotini)$ as data. Following Corollary~\ref{coro-linNH}, these data must have the required regularity, which is proven in Proposition~\ref{prop-estxlip}. Further, to confirm that $\mathcal{K}$ is well-defined, the data must satisfied compatibility conditions, namely
\begin{equation*}
\kappa \frac{\p \dot{u}_0}{\p n} = g(\cdot,0) + G(v_a,\mathfrak{p}_a)(\cdot,0), \qquad
\int_{\Gamma_N} \dot{u}_0\cdot n\, \d \Gamma_N = H(v_a)(0).
\end{equation*}
Since we assume that $u_0 = 0$, $\displaystyle \kappa \frac{\p \dot{u}_0}{\p n} + \sigma(0)n = g(\cdot,0)$ on~$\Gamma_N$ and $\displaystyle \int_{\Gamma_N} \dot{u}_0 \cdot n\, \d \Gamma_N = 0$, these conditions are automatically satisfied.
Now for $R>0$ define the set
\begin{equation}
\mathcal{B}_R(T) = \left\{ (v,\mathfrak{p}) \in
\dot{\mathcal{U}}_{p,T}(\Omega) \times  \mathcal{C}([0,T];\R) \mid 
\| v\|_{\dot{\mathcal{U}}_{p,T}(\Omega)} 
+ \| v_{|\Gamma_N}\|_{\L^{\infty}(0,T;\LL^p(\Gamma_N))}
+ \|\mathfrak{p}\|_{\mathcal{P}_{p,T}}
\leq 2C_0(T)R
\right\},
\label{def-ball}
\end{equation}
where the constant $C_0(T)$ is the one which appears in estimate~\eqref{est-Pruess3}. 
The ball~$\mathcal{B}_R(T)$ is clearly closed in $\dot{\mathcal{U}}_{p,T}(\Omega) \times  \mathcal{P}_{p,T} $. Let us prove that~$\mathcal{K}$ is a contraction in $\mathcal{B}_R(T)$, for $R$ large enough and $T$ small enough. For that we need Lipschitz estimates on the nonlinear terms~\eqref{NL1}--\eqref{NL3}. We first prove a set of technical lemmas.

\subsection{Technical lemmas}
\begin{lemma}
Let $B$ be a Banach space and $\varphi \in \W^{1,p}(0,T;B)$. We have
\begin{eqnarray}
\|\varphi \|_{\L^{\infty}(0,T;B)} & \leq & 
\|\varphi(0)\|_B + T^{1/{p'}} \| \dot{\varphi} \|_{\L^p(0,T;B)}, \label{basicest1}\\
\|\varphi \|_{\L^p(0,T;B)} & \leq & 
T^{1/p}\|\varphi(0)\|_B + T \| \dot{\varphi} \|_{\L^p(0,T;B)}, \label{basicest2} \\
\|\varphi\|_{\W^{\alpha,p}(0,T;B)} & \leq &  
T^{(1-\alpha)/p}\left(\|\varphi(0)\|_B +
 \|\varphi\|_{\W^{1,p}(0,T;B)} \right),
\label{basicest3}\\
\|\varphi\|_{\W^{1,p}(0,T;B)} & \leq & T^{1/p}\|\varphi(0)\|_B +C\| \dot{\varphi} \|_{\L^p(0,T;B)},
\label{basicest4}
\end{eqnarray}
for all $\alpha \in (0,1)$, assuming $T\leq 1$.
\end{lemma}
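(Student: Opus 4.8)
The plan is to obtain all four inequalities from the identity $\varphi(t)=\varphi(0)+\int_0^t\dot\varphi(s)\,\d s$, valid in $B$ after passing to the continuous representative of $\varphi\in\W^{1,p}(0,T;B)$, combined with Hölder's inequality on $(0,T)$; I would prove them in the stated order, since each one feeds the next. For~\eqref{basicest1} one bounds $\|\varphi(t)\|_B\le\|\varphi(0)\|_B+\int_0^T\|\dot\varphi(s)\|_B\,\d s$ and applies Hölder with exponents $p'$ and $p$ to the last integral, producing the factor $\|1\|_{\L^{p'}(0,T)}=T^{1/{p'}}$; taking the supremum over $t\in(0,T)$ gives~\eqref{basicest1}. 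Then~\eqref{basicest2} follows from $\|\varphi\|_{\L^p(0,T;B)}\le T^{1/p}\|\varphi\|_{\L^\infty(0,T;B)}$ together with~\eqref{basicest1} and the identity $T^{1/p}T^{1/{p'}}=T$. Estimate~\eqref{basicest4} is immediate from $\|\varphi\|_{\W^{1,p}(0,T;B)}\le\|\varphi\|_{\L^p(0,T;B)}+\|\dot\varphi\|_{\L^p(0,T;B)}$ by inserting~\eqref{basicest2} and using $T\le1$ to bound the coefficient $T$ in front of $\|\dot\varphi\|_{\L^p(0,T;B)}$ by $1$.

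The only inequality needing genuine work is~\eqref{basicest3}. Using the definition~\eqref{def-fractional}, write $\|\varphi\|_{\W^{\alpha,p}(0,T;B)}^p=\|\varphi\|_{\L^p(0,T;B)}^p+[\varphi]^p$ with $[\varphi]^p:=\int_0^T\int_0^T|t-s|^{-1-\alpha p}\|\varphi(t)-\varphi(s)\|_B^p\,\d s\,\d t$, and treat the two terms separately. The first one is handled by~\eqref{basicest2}: since $T\le1$, $\alpha>0$ and $p\ge1$ one has $T^{1/p}\le T^{(1-\alpha)/p}$ and $T\le T^{(1-\alpha)/p}$, hence $\|\varphi\|_{\L^p(0,T;B)}\le T^{(1-\alpha)/p}(\|\varphi(0)\|_B+\|\dot\varphi\|_{\L^p(0,T;B)})$. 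For the Gagliardo seminorm I would use the pointwise bound $\|\varphi(t)-\varphi(s)\|_B^p\le|t-s|^{p-1}\int_{\min(s,t)}^{\max(s,t)}\|\dot\varphi(r)\|_B^p\,\d r$, coming from Hölder applied to $\varphi(t)-\varphi(s)=\int_s^t\dot\varphi(r)\,\d r$. Substituting this into $[\varphi]^p$ leaves the kernel exponent $p-2-\alpha p$, and the key step is to apply Fubini so as to bring the $r$-integration outermost:
\begin{equation*}
[\varphi]^p\le 2\int_0^T\|\dot\varphi(r)\|_B^p\,K(r)\,\d r,\qquad K(r)=\int_r^T\!\!\int_0^r(t-s)^{p-2-\alpha p}\,\d s\,\d t.
\end{equation*}
The double integral $K(r)$ is computable in closed form; after rescaling $r=Ty$ one gets $K(r)=T^{p(1-\alpha)}\phi(r/T)$ with $\phi$ a bounded nonnegative function on $(0,1)$ depending only on $p$ and $\alpha$, so $K(r)\le C_{p,\alpha}\,T^{p(1-\alpha)}$ uniformly in $r$. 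Therefore $[\varphi]\le C_{p,\alpha}^{1/p}\,T^{1-\alpha}\|\dot\varphi\|_{\L^p(0,T;B)}\le C_{p,\alpha}^{1/p}\,T^{(1-\alpha)/p}\|\dot\varphi\|_{\L^p(0,T;B)}$, again using $T\le1$ and $p(1-\alpha)\ge1-\alpha\ge(1-\alpha)/p$. Adding the two contributions and taking $p$-th roots yields~\eqref{basicest3} (the multiplicative constant being absorbed into the generic~$C$).

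The main obstacle is precisely the estimate of $K(r)$. The tempting shortcut of pulling $\|\dot\varphi\|_{\L^p(0,T;B)}^p$ out of the inner integral \emph{before} integrating the kernel fails, because $\int_0^T|t-s|^{p-2-\alpha p}\,\d s$ diverges as soon as $\alpha\ge1/{p'}$ — which is exactly the regime that occurs in the applications of this lemma (for instance $\alpha=1-1/(2p)$, the exponent of the space $\mathcal{H}_{p,T}$). Performing the Fubini exchange first is what keeps the estimate valid for $\alpha$ close to $1$: the power of $T$ produced by $K(r)$ is $T^{p(1-\alpha)}$, whose exponent stays positive for every $\alpha\in(0,1)$. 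One must also check that $\phi$ remains bounded at the borderline value $\alpha=1/{p'}$, where the antiderivative of $(t-s)^{-1}$ is logarithmic; after rescaling this reduces to the boundedness on $(0,1)$ of $y\mapsto(1-y^{\gamma}-(1-y)^{\gamma})/(\gamma(\gamma-1))$ with $\gamma=p(1-\alpha)$, a function that extends continuously at $\gamma=1$ to the binary entropy $-y\ln y-(1-y)\ln(1-y)\le\ln 2$. Apart from this point, the whole proof is bookkeeping with powers of $T$ under the hypothesis $T\le1$.
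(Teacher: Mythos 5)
Your proofs of \eqref{basicest1}, \eqref{basicest2} and \eqref{basicest4} coincide with the paper's. For \eqref{basicest3}, however, you take a genuinely different route. The paper applies the interpolation inequality $\|\varphi\|_{\W^{\alpha,p}(0,T;B)} \leq \|\varphi\|_{\L^{p}(0,T;B)}^{1-\alpha}\|\varphi\|_{\W^{1,p}(0,T;B)}^{\alpha}$, controls the $\L^p$ factor via \eqref{basicest2}, and closes with the subadditivity of $x\mapsto x^{1-\alpha}$ together with Young's inequality. You instead bound the Gagliardo seminorm directly: the pointwise estimate $\|\varphi(t)-\varphi(s)\|_B^p\le|t-s|^{p-1}\int_{\min(s,t)}^{\max(s,t)}\|\dot\varphi(r)\|_B^p\,\d r$, a Fubini exchange, and the closed-form evaluation of the kernel $K(r)=T^{\gamma}\phi(r/T)$ with $\gamma=p(1-\alpha)$. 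Your version is longer but more elementary and self-contained: it does not invoke the identification of $\W^{\alpha,p}(0,T;B)$ with the real interpolation space $(\L^p,\W^{1,p})_{\alpha,p}$, so the $T$-uniformity of the constant is transparent at every step rather than resting on the (true but not completely trivial) fact that the interpolation constant can be taken $T$-independent. Your remarks on why the Fubini step must precede the kernel integration (divergence of $\int_0^T|t-s|^{p-2-\alpha p}\,\d s$ once $\alpha\ge 1/p'$) and on the borderline case $\gamma=1$, where the limit is the binary entropy, are both correct and identify the genuine obstruction. The only cosmetic difference from the stated inequality is a harmless multiplicative constant, which the paper's own argument also produces.
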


Since the local-in-time existence result, namely Theorem~\ref{th-locexist}, is obtained by assuming~$T$ small enough, in the rest of this section we will assume~$T\leq 1$, for the sake of concision in the different estimates, but without loss of generality.

\begin{proof}
We write $\varphi(t) = \varphi(0) + \displaystyle\int_0^t \dot{\varphi}(s) \d s$, and from the H\"{o}lder's inequality we get
\begin{equation*}
\|\varphi(t) \|_B \leq \|\varphi(0)\|_B + 
t^{1/{p'}}\|\dot{\varphi} \|_{\L^p(0,T;B)},
\end{equation*}
which leads to the first estimate. Since
\begin{equation*}
\|\varphi\|_{\L^p(0,T;B)} \leq T^{1/p} \|\varphi \|_{\L^{\infty}(0,T;B)},
\end{equation*}
we deduce the second estimate from the first one. Estimate~\eqref{basicest3} is deduced by interpolation:
\begin{equation*}
\|\varphi\|_{\W^{\alpha,p}(0,T;B)} \leq 
\|\varphi\|_{\L^{p}(0,T;B)}^{1-\alpha} \|\varphi\|^{\alpha}_{\W^{1,p}(0,T;B)}.
\end{equation*}
Using~\eqref{basicest2}, the subadditivity of the function $x\mapsto x^{1-\alpha}$ and the Young's inequality, this yields
\begin{equation*}
\begin{array} {rcl}
\|\varphi\|_{\W^{\alpha,p}(0,T;B)} & \leq & 
\left( T^{1/p}\|\varphi(0)\|_B + T \| \dot{\varphi} \|_{\L^p(0,T;B)} \right)^{1-\alpha}
\|\varphi\|^{\alpha}_{\W^{1,p}(0,T;B)} \\
& \leq & \left(T^{(1-\alpha)/p}\|\varphi(0)\|^{1-\alpha}_B 
+ T^{1-\alpha} \| \dot{\varphi} \|^{1-\alpha}_{\L^{p}(0,T;B)} \right)\|\varphi\|^{\alpha}_{\W^{1,p}(0,T;B)} \\
& \leq & T^{(1-\alpha)/p}\|\varphi(0)\|^{1-\alpha}_B \|\varphi\|^{\alpha}_{\W^{1,p}(0,T;B)}
+ T^{1-\alpha} \|\varphi\|_{\W^{1,p}(0,T;B)} \\
& \leq &  T^{(1-\alpha)/p} 
\left( (1-\alpha)\|\varphi(0)\|_B + \alpha \|\varphi\|_{\W^{1,p}(0,T;B)}\right)
+ T^{1-\alpha} \|\varphi\|_{\W^{1,p}(0,T;B)},
\end{array}
\end{equation*}
leading to the announced estimate, as $T^{1-\alpha} \leq T^{(1-\alpha)/p}$ when we assume $T\leq 1$. Finally, estimate~\eqref{basicest4} is obtained as follows
\begin{equation*}
\begin{array} {rcl}
\| \varphi \|_{\W^{1,p}(0,T;B)}
& \leq & \| \varphi \|_{\L^{p}(0,T;B)} + \| \dot{\varphi} \|_{\L^{p}(0,T;B)} \\
& \leq & T^{1/p}\|\varphi(0)\|_B + (T +1)\| \dot{\varphi} \|_{\L^p(0,T;B)} \\
& \leq & T^{1/p}\|\varphi(0)\|_B +C\| \dot{\varphi} \|_{\L^p(0,T;B)} ,
\end{array}
\end{equation*}
where we have used~\eqref{basicest2}, concluding the proof.
\end{proof}

We will also need a result concerning the stability by product of fractional Sobolev spaces. We deduce from~\cite[Lemma~4.1]{Brezis2001}, the so-called Runst-Sickel lemma, a consequence of~\cite[p.~345]{Runst1996}, the following result:

\begin{lemma} \label{lemma-Brezis}
Assume $\varphi_1 \in \W^{\beta,p}(0,T;\R)$ for some $1/p<\beta<1$, and $\varphi_2 \in \W^{1,p}(0,T;\R)$ such that $\varphi_2(0) = 0$. Then we have
\begin{equation*}
\| \varphi_1 \varphi_2 \|_{\W^{\beta,p}(0,T;\R)} \leq C T^{(1-\beta)/p} \left(
\| \varphi_1\|_{\L^{\infty}(0,T;\R)} + \| \varphi_1 \|_{\W^{\beta,p}(0,T;\R)}
\right)
\| \varphi_2 \|_{\W^{1,p}(0,T;\R)}
\end{equation*}
where the constant $C>0$ is independent of~$T\leq 1$.
\end{lemma}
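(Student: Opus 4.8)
The plan is to work directly with the Gagliardo definition~\eqref{def-fractional} and to reduce everything to the elementary estimates~\eqref{basicest1}--\eqref{basicest3}; this gives a self-contained proof of the Runst--Sickel-type product rule in the one-dimensional regime $\beta p>1$, with the additional bookkeeping needed to track the power of~$T$. The first step is to split
\[
\|\varphi_1\varphi_2\|_{\W^{\beta,p}(0,T;\R)}^p = \|\varphi_1\varphi_2\|_{\L^p(0,T;\R)}^p + [\varphi_1\varphi_2]_{\beta,p}^p,
\]
where $[\psi]_{\beta,p}^p := \int_0^T\int_0^T |\psi(t)-\psi(s)|^p|t-s|^{-1-\beta p}\,\d t\,\d s$ is the Gagliardo seminorm. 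For the $\L^p$ part I would use $\|\varphi_1\varphi_2\|_{\L^p}\le \|\varphi_1\|_{\L^\infty}\|\varphi_2\|_{\L^p}$ together with~\eqref{basicest2} and $\varphi_2(0)=0$, which gives $\|\varphi_2\|_{\L^p}\le T\|\dot\varphi_2\|_{\L^p}\le T^{(1-\beta)/p}\|\varphi_2\|_{\W^{1,p}}$, since $T\le 1$ and $(1-\beta)/p<1$.

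For the seminorm I would use the pointwise Leibniz identity $\varphi_1(t)\varphi_2(t)-\varphi_1(s)\varphi_2(s) = \varphi_1(t)\bigl(\varphi_2(t)-\varphi_2(s)\bigr) + \varphi_2(s)\bigl(\varphi_1(t)-\varphi_1(s)\bigr)$, then the convexity bound $|a+b|^p\le 2^{p-1}(|a|^p+|b|^p)$, and integrate over $(0,T)^2$ against $|t-s|^{-1-\beta p}$. The first term is $\le 2^{p-1}\|\varphi_1\|_{\L^\infty}^p[\varphi_2]_{\beta,p}^p$, and~\eqref{basicest3} with $\varphi_2(0)=0$ gives $[\varphi_2]_{\beta,p}\le\|\varphi_2\|_{\W^{\beta,p}}\le T^{(1-\beta)/p}\|\varphi_2\|_{\W^{1,p}}$. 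The second term is $\le 2^{p-1}\|\varphi_2\|_{\L^\infty}^p[\varphi_1]_{\beta,p}^p$; here~\eqref{basicest1} with $\varphi_2(0)=0$ gives $\|\varphi_2\|_{\L^\infty}\le T^{1/p'}\|\dot\varphi_2\|_{\L^p}\le T^{1/p'}\|\varphi_2\|_{\W^{1,p}}$, so this term is $\le 2^{p-1}T^{p/p'}\|\varphi_2\|_{\W^{1,p}}^p\|\varphi_1\|_{\W^{\beta,p}}^p$; since $p>3$ and $\beta\in(0,1)$ we have $p/p'=p-1>1-\beta$, hence $T^{p-1}\le T^{1-\beta}$ for $T\le1$.

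Collecting the three contributions, everything is bounded by $C\,T^{1-\beta}\bigl(\|\varphi_1\|_{\L^\infty}^p+\|\varphi_1\|_{\W^{\beta,p}}^p\bigr)\|\varphi_2\|_{\W^{1,p}}^p$ with $C$ depending only on~$p$; taking $p$-th roots and using the subadditivity of $x\mapsto x^{1/p}$ yields the stated inequality. The only delicate point is ensuring that the power of~$T$ coming from the "bad" term $\|\varphi_2\|_{\L^\infty}[\varphi_1]_{\beta,p}$ is not worse than $T^{1-\beta}$: this is exactly where the hypothesis $\varphi_2(0)=0$ (which turns $\|\varphi_2\|_{\L^\infty}$ into a positive power of~$T$ times $\|\dot\varphi_2\|_{\L^p}$) and the range $p>3$ (which guarantees $p-1\ge 1-\beta$) are used. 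Keeping $\|\varphi_1\|_{\L^\infty}$ as a separate quantity on the right-hand side is also essential, as it avoids invoking the embedding $\W^{\beta,p}(0,T;\R)\hookrightarrow\mathcal{C}([0,T];\R)$ whose constant would depend on~$T$.
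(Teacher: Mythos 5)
Your proof is correct, but it takes a genuinely more hands-on route than the paper. The paper invokes the Runst--Sickel (Brezis--Mironescu) product rule as a black box,
\[
\|\varphi_1\varphi_2\|_{\W^{\beta,p}}\le C\bigl(\|\varphi_1\|_{\L^\infty}\|\varphi_2\|_{\W^{\beta,p}}+\|\varphi_1\|_{\W^{\beta,p}}\|\varphi_2\|_{\L^\infty}\bigr),
\]
then \emph{asserts} (by following the cited proof) that its constant does not depend on the length $T$ of the interval, and afterwards applies~\eqref{basicest1} and~\eqref{basicest3} to $\varphi_2$ exactly as you do in the last step. You instead reprove this bilinear estimate from scratch in the one-dimensional, $\beta p>1$ regime: you split off the $\L^p$ piece and attack the Gagliardo seminorm directly via the Leibniz decomposition $\varphi_1(t)\varphi_2(t)-\varphi_1(s)\varphi_2(s)=\varphi_1(t)\bigl(\varphi_2(t)-\varphi_2(s)\bigr)+\varphi_2(s)\bigl(\varphi_1(t)-\varphi_1(s)\bigr)$, then bound each piece pointwise and feed in~\eqref{basicest1}--\eqref{basicest3}. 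Every step checks out, including the bookkeeping of exponents (the crucial comparisons $T^{p}\le T^{1-\beta}$, $T^{p/p'}=T^{p-1}\le T^{1-\beta}$ for $T\le 1$ are valid because $p>2-\beta$, which certainly holds for $p>3$ and $\beta\in(1/p,1)$). What your approach buys is self-containedness: it removes the need to inspect the proof of the Brezis--Mironescu lemma to verify $T$-independence of the constant, which is exactly the delicate point the paper glosses over with the sentence ``one can show that the constant $C>0$ depends only on $\beta$ and $p$.'' The underlying mechanism is of course the same -- the Leibniz decomposition is what Runst--Sickel is built from -- so the two arguments differ in presentation rather than in substance, but yours is the more transparent one in this one-dimensional setting.
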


\begin{proof}
When $\beta p >1$ we have $\W^{\beta,p}(0,T;\R) \hookrightarrow \mathcal{C}([0,T];\R)$, and $\varphi_1(0)$, $\varphi_2(0)$ make sense. Further,~\cite[Lemma~4.1]{Brezis2001} yields
\begin{equation*}
\| \varphi_1 \varphi_2 \|_{\W^{\beta,p}(0,T;\R)} \leq C 
\left(\|  \varphi_1 \|_{\L^{\infty}(0,T;\R)}
\| \varphi_2 \|_{\W^{\beta,p}(0,T;\R)}
 + \| \varphi_1 \|_{\W^{\beta,p}(0,T;\R)}
\|  \varphi_2 \|_{\L^{\infty}(0,T;\R)} \right).
\end{equation*}
By following the proof of~\cite[Lemma~4.1]{Brezis2001}, one can show that the constant $C>0$ depends only on $\beta$ and $p$, but not on the size of the domain~$T$. Combined with~\eqref{basicest1}, we deduce
\begin{equation*}
\| \varphi_1 \varphi_2 \|_{\W^{\beta,p}(0,T;\R)} \leq C \left(
\|  \varphi_1 \|_{\L^{\infty}(0,T;\R)}
\| \varphi_2 \|_{\W^{\beta,p}(0,T;\R)}
 + T^{1/{p'}}\| \varphi_1 \|_{\W^{\beta,p}(0,T;\R)}
\| \dot{\varphi}_2 \|_{\L^p(0,T;\R)}\right),
\end{equation*}
because we have assumed $\varphi_2(0) = 0$. Next we use~\eqref{basicest3} for controlling $\| \varphi_2 \|_{\W^{\beta,p}(0,T;\R)}$ as follows
\begin{equation*}
\| \varphi_2 \|_{\W^{\beta,p}(0,T;\R)} \leq T^{(1-\beta)/p} 
\| \varphi_2 \|_{\W^{1,p}(0,T;\R)},
\end{equation*}
leading to 
\begin{equation*}
\begin{array} {rcl}
\| \varphi_1 \varphi_2 \|_{\W^{\beta,p}(0,T;\R)} & \leq & C \left(
T^{(1-\beta)/p}\|  \varphi_1 \|_{\L^{\infty}(0,T;\R)}
\| \varphi_2 \|_{\W^{1,p}(0,T;\R)} \right. \\
& &  \left. +
T^{1/{p'}}\| \varphi_1 \|_{\W^{\beta,p}(0,T;\R)}
\| \varphi_2 \|_{\W^{1,p}(0,T;\R)}\right).
\end{array}
\end{equation*}
Since $p\geq 1$ and $\beta>1/p$, we have $(1-\beta)/p < 1/p'$, and therefore for $T\leq 1$ we have $T^{(1-\beta)/p} \geq T^{1/{p'}}$. Thus the announced estimate follows.
\end{proof}

\subsection{Lipschitz estimates} \label{sec-Lip}

We derive the following intermediate lemmas before stating Lipschitz estimates in Proposition~\ref{prop-estxlip}.

\begin{lemma} \label{lemma-sigmax}
Under Assumption~$\mathbf{A1}$, if $v_1, v_2 \in \mathcal{B}_R(T)$ define $u_i(\cdot,t) =  \displaystyle \int_0^t v_i(\cdot,s)\d s$ for $i\in \{1,2\}$, then there exists a constant $C_R(T)$, non-decreasing with respect to~$R$, and non -decreasing with respect to~$T$, such that
\begin{equation}
\| \sigma(\nabla u_1) - \sigma(\nabla u_2) \|_{\L^p(0,T;\WWW^{1,p}(\Omega))} \leq 
C_R(T) T \|\nabla v_1 - \nabla v_2 \|_{\L^p(0,T;\WWW^{1,p}(\Omega))}.
\label{estlip-sigma}
\end{equation}
In particular, for all $v\in \mathcal{B}_R(T)$, if $u(\cdot,t) =  \displaystyle\int_0^t v(\cdot,s)\d s$, then
\begin{equation}
\| \sigma(\nabla u) \|_{\L^p(0,T;\WWW^{1,p}(\Omega))} \leq 
T^{1/p}\| \sigma(0) \|_{\WWW^{1,p}(\Omega)} + 
C_R(T) T \|\nabla v \|_{\L^p(0,T;\WWW^{1,p}(\Omega))} .
\label{estlip-sigma0}
\end{equation}
\end{lemma}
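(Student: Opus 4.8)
The plan is to exploit that $\sigma$ is locally Lipschitz from $\WWW^{1,p}(\Omega)$ to $\WWW^{1,p}(\Omega)$ (a consequence of Assumption~$\mathbf{A1}$, as noted after equation~\eqref{sigmaprime}) and to gain the factor $T$ from the relation $u_i(\cdot,t)=\int_0^t v_i(\cdot,s)\,\d s$. First I would fix $t\in(0,T]$ and write, using the mean value form of the Lipschitz property applied pointwise in time,
\begin{equation*}
\|\sigma(\nabla u_1(t))-\sigma(\nabla u_2(t))\|_{\WWW^{1,p}(\Omega)}
\leq L\bigl(\|\nabla u_1(t)\|_{\WWW^{1,p}},\|\nabla u_2(t)\|_{\WWW^{1,p}}\bigr)\,
\|\nabla u_1(t)-\nabla u_2(t)\|_{\WWW^{1,p}(\Omega)},
\end{equation*}
where $L(\cdot,\cdot)$ is a locally bounded (non-decreasing) function coming from the $\mathcal{C}^1$-regularity of $\sigma$ via~\eqref{sigmaprime} and the algebra property~\eqref{W-algebra}. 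The first task is therefore to control $\|\nabla u_i(t)\|_{\WWW^{1,p}(\Omega)}$ uniformly: since $\nabla u_i(t)=\int_0^t\nabla v_i(s)\,\d s$, Hölder in time gives $\|\nabla u_i(t)\|_{\WWW^{1,p}(\Omega)}\leq t^{1/p'}\|\nabla v_i\|_{\L^p(0,T;\WWW^{1,p}(\Omega))}\leq T^{1/p'}\cdot C\,C_0(T)R$ for $v_i\in\mathcal{B}_R(T)$, so these quantities stay in a fixed bounded set; hence $L$ along the segment is bounded by a constant $C_R(T)$ that is non-decreasing in both $R$ and $T$ (the $T$-monotonicity of $C_0(T)$ is used here).

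Next I would estimate the $\L^p(0,T)$-norm in time. Bounding $\|\nabla u_1(t)-\nabla u_2(t)\|_{\WWW^{1,p}(\Omega)}=\|\int_0^t(\nabla v_1-\nabla v_2)(s)\,\d s\|_{\WWW^{1,p}}\leq t^{1/p'}\|\nabla v_1-\nabla v_2\|_{\L^p(0,T;\WWW^{1,p}(\Omega))}$ pointwise and integrating the $p$-th power over $(0,T)$ yields a factor $\bigl(\int_0^T t^{p/p'}\,\d t\bigr)^{1/p}=C\,T^{1/p'+1/p}=C\,T$. Combining,
\begin{equation*}
\|\sigma(\nabla u_1)-\sigma(\nabla u_2)\|_{\L^p(0,T;\WWW^{1,p}(\Omega))}
\leq C_R(T)\,T\,\|\nabla v_1-\nabla v_2\|_{\L^p(0,T;\WWW^{1,p}(\Omega))},
\end{equation*}
which is~\eqref{estlip-sigma}. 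For~\eqref{estlip-sigma0}, apply~\eqref{estlip-sigma} with $v_1=v$ and $v_2=0$ (so $u_2\equiv 0$), and add and subtract $\sigma(0)$; the term $\|\sigma(0)\|_{\L^p(0,T;\WWW^{1,p}(\Omega))}=T^{1/p}\|\sigma(0)\|_{\WWW^{1,p}(\Omega)}$ accounts for the first summand, and the Lipschitz estimate gives the second.

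The main obstacle is making the constant $L$, and hence $C_R(T)$, genuinely uniform over the segment $[\nabla u_2(t),\nabla u_1(t)]$ rather than merely finite at the endpoints: this requires that $\sigma'$ is bounded on bounded sets of $\WWW^{1,p}(\Omega)$, which follows from the continuity of $u\mapsto\sigma'(\nabla u)$ stated after~\eqref{sigmaprime} together with the explicit form~\eqref{sigmaprime} and the algebra inequality~\eqref{W-algebra}; one should be a little careful that the segment of interpolants between $\nabla u_1(t)$ and $\nabla u_2(t)$ still lies in the ball on which these bounds hold, which is exactly what the $T^{1/p'}CC_0(T)R$ bound on each $\|\nabla u_i(t)\|_{\WWW^{1,p}}$ guarantees (convexity of the ball). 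Everything else is a routine application of Hölder's inequality in the time variable.
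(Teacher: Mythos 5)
Your proposal is correct and follows essentially the same route as the paper: a mean-value/local-Lipschitz bound on $\sigma$ via $\sigma'$, uniform control of $\|\nabla u_i(t)\|_{\WWW^{1,p}(\Omega)}$ over the ball $\mathcal{B}_R(T)$ to define $C_R(T)$, and extraction of the factor $T$ from $\nabla u_i=\int_0^{t}\nabla v_i$. The only cosmetic difference is that you obtain the $T$ factor by integrating $t^{p/p'}$ directly, whereas the paper simply invokes its estimate~\eqref{basicest2} (which gives $\|\varphi\|_{\L^p}\leq T\|\dot\varphi\|_{\L^p}$ when $\varphi(0)=0$); these are the same computation.
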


\begin{proof}
From the mean value theorem, we have
\begin{equation*}
\| \sigma(\nabla u_1) - \sigma(\nabla u_2) \|_{\mathbb{W}^{1,p}(\Omega)} 
 \leq 
\displaystyle\sup_{s \in [0,1]} \left(
\left\|\sigma'(s \nabla u_1 + (1-s)\nabla u_2) \right\|_{\mathscr{L}\left(\mathbb{W}^{1,p}(\Omega);\mathbb{W}^{1,p}(\Omega)\right)}
\right) \|\nabla u_1 - \nabla u_2\|_{\mathbb{W}^{1,p}(\Omega)}, 
\end{equation*}
where~$\sigma'$ is introduced in~\eqref{sigmaprime}. Therefore, using that the set $\mathcal{B}_R(T)$ is convex, we get
\begin{equation*}
\| \sigma(\nabla u_1) - \sigma(\nabla u_2) \|_{\L^p(0,T;\mathbb{W}^{1,p}(\Omega))} 
 \leq 
C_R(T) \|\nabla u_1 - \nabla u_2\|_{\L^p(0,T;\mathbb{W}^{1,p}(\Omega))}
\end{equation*} 
where we set
\begin{equation*}
C_R(T):=\sup_{
\| \nabla \hat{v}\|_{\L^{p}(0,T;\WWW^{1,p}(\Omega))} \leq 2C_0(T)R
} 
\left(
\left\|\sigma'(\nabla\hat{u})\right\|_{\L^{\infty}(0,T;\mathscr{L}\left(\mathbb{W}^{1,p}(\Omega);\mathbb{W}^{1,p}(\Omega)\right))}\right).
\end{equation*}
where we denote~$\hat{v}$ such that $\hat{u}(\cdot,t) = \displaystyle \int_0^t \hat{v}(\cdot,s)\d s$. Since from~\eqref{basicest2} we have
\begin{equation*}
\| \nabla \hat{u}\|_{\L^{\infty}(0,T;\WWW^{1,p}(\Omega))} \leq T \|\nabla \hat{v}\|_{\L^p(0,T;\WWW^{1,p}(\Omega))} \leq 2C_0(T)TR,
\end{equation*}
from Assumption~$\mathbf{A1}$ the constant~$C_R(T)$ is well-defined. Note that~$C_{R}(T)$ is non-decreasing with respect to~$R$, and also non-decreasing with respect to~$T$, as $C_0(T)$ is non-decreasing with respect to~$T$. We then obtain~\eqref{estlip-sigma} by using~\eqref{basicest2} with $\varphi = \nabla u_1 - \nabla u_2$ that satisfies $\varphi(0) = 0$. From~\eqref{estlip-sigma}, we get~\eqref{estlip-sigma0} by choosing $v_1 = v$ and $v_2 = 0$, which concludes the proof.
\end{proof}

\begin{lemma} \label{lemma-cof}
Assume that $u\in \W^{1,p}(0,T; \WW^{2,p}(\Omega))$satisfies $u(\cdot,0) \equiv 0$ and recall the notation $\Phi(u) = \I+\nabla u$. Then
\begin{equation} \label{est-cof1}
\begin{array} {rcl}
\| \I -\cof \Phi(u) \|_{\W^{1,p}(0,T;\WWW^{1,p}(\Omega))} & \leq &
C
\left(\|\nabla \dot{u}\|_{\L^{p}(0,T;\WWW^{1,p}(\Omega)) } \right)
\left(1 + 
\|\nabla \dot{u}\|^{d-2}_{\L^{p}(0,T;\WWW^{1,p}(\Omega)) }
\right).
\end{array}
\end{equation}
Furthermore, if $u_1, u_2\in \W^{1,p}(0,T; \WW^{2,p}(\Omega))$ such that $u_1(0) = u_2(0) = u_0 = 0$, then
\begin{equation} \label{est-cof12}
\begin{array} {rcl}
\| \cof \Phi(u_1) -\cof \Phi(u_2) \|_{\W^{1,p}(0,T;\WWW^{1,p}(\Omega))} & \leq &
C
\|\nabla \dot{u}_1 - \nabla \dot{u}_2\|_{\L^{p}(0,T;\WWW^{1,p}(\Omega)) }
\\
& & \times \left(1 + 
\|\nabla \dot{u}_1\|^{d-2}_{\L^{p}(0,T;\WWW^{1,p}(\Omega)) }
+ \|\nabla \dot{u}_2\|^{d-2}_{\L^{p}(0,T;\WWW^{1,p}(\Omega)) }
\right).
\end{array}
\end{equation}
\end{lemma}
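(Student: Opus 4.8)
The plan is to exploit two structural facts: first, that $\cof(\I+\nabla u)$ is a polynomial of degree $d-1$ in the entries of $\nabla u$ with no constant term beyond the value $\cof(\I)=\I$, so that $\I-\cof\Phi(u)$ is a polynomial whose every monomial contains at least one factor $\nabla u$; and second, that $\WWW^{1,p}(\Omega)$ is a Banach algebra for $p>d$ (estimate~\eqref{W-algebra}), so products of $\WWW^{1,p}$-valued functions are controlled multiplicatively. Concretely, for $d=2$ we have $\I-\cof\Phi(u)=-(\nabla u)^{\#}$ where $A\mapsto A^{\#}$ is linear, and for $d=3$ the cofactor map is quadratic, so $\I-\cof\Phi(u)$ is a sum of a term linear in $\nabla u$ and a term quadratic in $\nabla u$. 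Writing $v=\dot u$ and noting $u(\cdot,t)=\int_0^t v(\cdot,s)\,\d s$ so that $u(\cdot,0)=0$ and $\dot u=v$, I would first estimate $\|\I-\cof\Phi(u)\|_{\L^p(0,T;\WWW^{1,p}(\Omega))}$ and then differentiate in time.

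For the time derivative, observe that $\tfrac{\d}{\d t}\cof\Phi(u)$ is obtained by the chain rule: it equals $D\cof(\Phi(u)).\nabla\dot u$, a polynomial of degree $d-2$ in $\nabla u$ multiplied linearly by $\nabla\dot u=\nabla v$. Hence, using the algebra property~\eqref{W-algebra} pointwise in time together with Hölder's inequality in time (distributing the exponent $p$ across the at most $d-1$ factors), one gets
\begin{equation*}
\left\| \tfrac{\d}{\d t}\cof\Phi(u)\right\|_{\L^p(0,T;\WWW^{1,p}(\Omega))}
\leq C\,\|\nabla v\|_{\L^p(0,T;\WWW^{1,p}(\Omega))}\,
\big(1+\|\nabla u\|_{\L^\infty(0,T;\WWW^{1,p}(\Omega))}^{d-2}\big),
\end{equation*}
and then~\eqref{basicest2} applied to $\varphi=\nabla u$ (which vanishes at $t=0$) gives $\|\nabla u\|_{\L^\infty(0,T;\WWW^{1,p}(\Omega))}\leq T\|\nabla v\|_{\L^p(0,T;\WWW^{1,p}(\Omega))}\leq \|\nabla v\|_{\L^p(0,T;\WWW^{1,p}(\Omega))}$ since $T\leq 1$, yielding the factor $(1+\|\nabla\dot u\|^{d-2}_{\L^p(0,T;\WWW^{1,p}(\Omega))})$ of~\eqref{est-cof1}. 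The $\L^p(0,T;\WWW^{1,p})$-norm of $\I-\cof\Phi(u)$ itself is bounded the same way — each monomial has a factor $\nabla u=\int_0^t\nabla v$, so one spare power of $T\leq 1$ (hence a constant) appears — and summing the two contributions gives~\eqref{est-cof1}.

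For~\eqref{est-cof12} I would write $\cof\Phi(u_1)-\cof\Phi(u_2)$ and its time derivative as telescoping sums: since the cofactor map is polynomial, $\cof\Phi(u_1)-\cof\Phi(u_2)$ factors as $(\nabla u_1-\nabla u_2)$ times a polynomial of degree $d-2$ in $\nabla u_1,\nabla u_2$, and likewise $\tfrac{\d}{\d t}(\cof\Phi(u_1)-\cof\Phi(u_2))$ splits into a term with factor $(\nabla\dot u_1-\nabla\dot u_2)$ and a term with factor $(\nabla u_1-\nabla u_2)$ (the latter again absorbing a power of $T$), each multiplied by polynomials of degree $\leq d-2$ in $\nabla u_1,\nabla u_2$. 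Applying~\eqref{W-algebra}, Hölder in time, the subadditivity $a^{d-2}+b^{d-2}\geq(a+b)^{d-2}$-type bounds (or simply $\|\nabla u_i\|^{d-2}\leq\|\nabla\dot u_1\|^{d-2}+\|\nabla\dot u_2\|^{d-2}$ up to constants), and~\eqref{basicest2} to pass from $u_i$ to $\dot u_i$, produces~\eqref{est-cof12}. The main obstacle is purely bookkeeping: keeping track of how the polynomial degree $d-1$ distributes into exactly the right number of factors so that Hölder in time matches the single $\L^p$-norm on the right-hand side, and making sure the "extra" factors of $T\leq 1$ coming from $u=\int_0^t v$ are correctly discarded into the generic constant $C$ rather than left as powers of $T$. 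For $d=2$ everything is linear and the estimates are immediate; the case $d=3$ is where the quadratic term must be handled with~\eqref{W-algebra}.
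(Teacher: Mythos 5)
Your approach matches the paper's: both use the polynomial structure of $A\mapsto\cof(A)$, the algebra property~\eqref{W-algebra} of $\WWW^{1,p}(\Omega)$, and the reduction of $\L^\infty(0,T)$-control of $\nabla u_i$ to $\L^p(0,T)$-control of $\nabla\dot u_i$ via $u_i(\cdot,0)=0$. In one spot you are in fact more careful: the paper states the pointwise-in-time bound $\big\|\tfrac{\p}{\p t}\big(\cof A-\cof B\big)\big\|_{\WWW^{1,p}(\Omega)}\leq C\|\dot A-\dot B\|_{\WWW^{1,p}(\Omega)}\big(1+\|A\|^{d-2}_{\WWW^{1,p}(\Omega)}+\|B\|^{d-2}_{\WWW^{1,p}(\Omega)}\big)$, which is not literally true (take $\dot A=\dot B$ with $A\neq B$), whereas your splitting of $\tfrac{\d}{\d t}(\cof\Phi(u_1)-\cof\Phi(u_2))$ into a piece carrying $\nabla\dot u_1-\nabla\dot u_2$ and a piece carrying $\nabla u_1-\nabla u_2$, with the latter absorbing a power of $T$ thanks to $\nabla u_1(0)=\nabla u_2(0)=0$, is the correct way to close that step and still lands on~\eqref{est-cof12}. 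One small slip in your write-up: the $\L^\infty(0,T)$-in-time bound $\|\nabla u\|_{\L^\infty(0,T;\WWW^{1,p}(\Omega))}\leq T^{1/p'}\|\nabla\dot u\|_{\L^p(0,T;\WWW^{1,p}(\Omega))}$ comes from~\eqref{basicest1}, not~\eqref{basicest2}, though the conclusion is unchanged for $T\leq1$.
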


\begin{proof}
Let us directly prove~\eqref{est-cof12}, as it implies~\eqref{est-cof1} by choosing $u_1=u_0=0$ and $u_2=u$. 
First, consider two matrix fields $A$,~$B \in \WWW^{1,p}(\Omega)$, playing the role of~$\Phi(u_1)$ and~$\Phi(u_2)$, respectively. Recall that $A\mapsto \cof(A)$ is a polynomial form of degree $d-1$ of the coefficients of~$A$, and since the space ${\mathbb{W}^{1,p}(\Omega)}$ is stable by product, following the estimate~\eqref{W-algebra}, we obtain the two following estimates
\begin{equation*}
\begin{array} {rcl}
\| \cof (A) -\cof (B) \|_{\WWW^{1,p}(\Omega)} & \leq & 
C \| A-B\|_{\WWW^{1,p}(\Omega)}\left(
1+ \|A\|^{d-2}_{\WWW^{1,p}(\Omega)} + \|B\|^{d-2}_{\WWW^{1,p}(\Omega)}
\right), \\
\displaystyle\left\|\frac{\p }{\p t} \left( \cof (A) -\cof (B) \right)\right\|_{\WWW^{1,p}(\Omega)} & \leq &
C \| \dot{A}-\dot{B}\|_{\WWW^{1,p}(\Omega)}\left(
1+ \|A\|^{d-2}_{\WWW^{1,p}(\Omega)} + \|B\|^{d-2}_{\WWW^{1,p}(\Omega)}
\right),
\end{array}
\end{equation*}
which yield
\begin{equation*}
\begin{array} {rcl}
\| \cof (A) -\cof (B) \|_{\L^p(0,T;\WWW^{1,p}(\Omega))} & \leq & 
C \| A-B\|_{\L^p(0,T;\WWW^{1,p}(\Omega))} \\
& & \times \left(
1+ \|A\|^{d-2}_{\L^{\infty}(0,T;\WWW^{1,p}(\Omega))} + \|B\|^{d-2}_{\L^{\infty}(0,T;\WWW^{1,p}(\Omega))}
\right), \\
\displaystyle\left\|\frac{\p }{\p t} \left( \cof (A) -\cof (B) \right)\right\|_{\L^p(0,T;\WWW^{1,p}(\Omega))} & \leq & 
C \| \dot{A}-\dot{B}\|_{\L^p(0,T;\WWW^{1,p}(\Omega))} \\
& &  \times\left(
1+ \|A\|^{d-2}_{\L^{\infty}(0,T;\WWW^{1,p}(\Omega))} + \|B\|^{d-2}_{\L^{\infty}(0,T;\WWW^{1,p}(\Omega))}
\right),
\end{array}
\end{equation*}
and thus
\begin{equation*}
\| \cof (A) -\cof (B) \|_{\W^{1,p}(0,T;\WWW^{1,p}(\Omega))}  \leq 
C \| A-B\|_{\W^{1,p}(0,T;\WWW^{1,p}(\Omega))}
\left(
1+ \|A\|^{d-2}_{\L^{\infty}(0,T;\WWW^{1,p}(\Omega))} + \|B\|^{d-2}_{\L^{\infty}(0,T;\WWW^{1,p}(\Omega))}
\right).
\end{equation*}
Next, we use estimate~\eqref{basicest1} for controlling in the right-hand-side the matrix fields $A$ and $B$ in $\L^{\infty}(0,T;\WWW^{1,p}(\Omega))$, as follows
\begin{equation*}
\begin{array} {rcl}
\|A\|_{\L^{\infty}(0,T;\WWW^{1,p}(\Omega))} & \leq &
\|A(0) \|_{\WWW^{1,p}(\Omega)} + T^{1/{p'}} 
\|\dot{A}\|_{\L^{p}(0,T;\WWW^{1,p}(\Omega))}\\
& \leq & C
+ T^{1/{p'}} 
\|\nabla\dot{u}\|_{\L^{p}(0,T;\WWW^{1,p}(\Omega))} ,
\end{array}
\end{equation*}
and further use~\eqref{basicest4} for controlling
\begin{equation*}
\| A-B\|_{\W^{1,p}(0,T;\WWW^{1,p}(\Omega))}  \leq 
\|A(0) - B(0) \|_{\WWW^{1,p}(\Omega)} + 
\|\dot{A}-\dot{B}\|_{\L^p(0,T;\WWW^{1,p}(\Omega))} 
 \leq  \|\dot{A}-\dot{B}\|_{\L^p(0,T;\WWW^{1,p}(\Omega))},
\end{equation*}
as $A(0) = B(0) = \I+\nabla u_0 = \I$, which leads us to the announced result.
\end{proof}

We deduce the Lipschitz properties for the nonlinear terms~\eqref{NL1}--\eqref{NL3}:

\begin{proposition} \label{prop-estxlip}
For all $(v,\mathfrak{p}) \in \mathcal{B}_R(T)$ we have
\begin{subequations} \label{Superzlip1}
\begin{eqnarray}
\| F(v) \|_{\mathcal{F}_{p,T}(\Omega)} & \leq & 
C \left(T^{1/p}\|\sigma(0)\|_{\WWW^{1,p}(\Omega)}+
C_R(T)TR\right)  \label{Zest1},\\
\| G(v,\mathfrak{p}) \|_{\mathcal{G}_{p,T}(\Gamma_N)} & \leq & 
C\|\sigma(0)\|_{\WWW^{1,p}(\Omega)} +
CT^{(p+1)/2p^2}
\left(
 CC_R(T)TR + R^2 + R^d 
\right),   \label{Zest2}\\
\|H(v)\|_{\mathcal{H}_{p,T}} & \leq & 
CT^{1/2p^2}\left(R^2 + R^{d} \right),\label{Zest3}
\end{eqnarray}
\end{subequations}
where $C_{R}(T)$ appears in Lemma~\ref{lemma-sigmax}.
Moreover, if $(v_1,\mathfrak{p}_1)$, $(v_2,\mathfrak{p}_2) \in \mathcal{B}_R(T)$, then we have
\begin{subequations} \label{Superzlip2}
\begin{eqnarray}
\| F(v_1)-F(v_2) \|_{\mathcal{F}_{p,T}(\Omega)} & \leq & C C_R(T)T
\| v_1-v_2\|_{\dot{\mathcal{U}}_{p,T}(\Omega)} , \label{Zest11}\\
\| G(v_1,\mathfrak{p}_1) - G(v_2,\mathfrak{p}_2) \|_{\mathcal{G}_{p,T}(\Gamma_N)} & \leq & 
CT^{(p+1)/2p^2}\left(
C_R(T)T\|v_1-v_2\|_{\dot{\mathcal{U}}_{p,T}(\Omega)}\right.
 \nonumber\\
& &  +
\left.(R+R^{d-1})\left(
\|v_1-v_2\|_{\dot{\mathcal{U}}_{p,T}(\Omega)} +
\|p_1-p_2\|_{\mathcal{P}_{p,T}}\right)
\right),\label{Zest12}\\
\|H(v_1)-H(v_2)\|_{\mathcal{H}_{p,T}} & \leq & 
CT^{1/2p^2}(R+R^{d-1}) 
\left(\|v_1-v_2\|_{\dot{\mathcal{U}}_{p,T}(\Omega)}
+ \|v_1-v_2\|_{\L^{\infty}(0,T;\LL^p(\Gamma_N)}
\right). \nonumber \\ & & \label{Zest13}
\end{eqnarray}
\end{subequations}
\end{proposition}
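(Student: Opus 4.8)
The plan is to estimate each of the nonlinear terms $F(v)$, $G(v,\mathfrak{p})$, $H(v)$ by unwinding the definitions~\eqref{NL1}--\eqref{NL3} and applying the technical lemmas proved above, tracking carefully which positive power of $T$ is extracted at each step. Throughout I use that $(v,\mathfrak{p})\in\mathcal{B}_R(T)$ means $\|v\|_{\dot{\mathcal{U}}_{p,T}(\Omega)}+\|v_{|\Gamma_N}\|_{\L^\infty(0,T;\LL^p(\Gamma_N))}+\|\mathfrak{p}\|_{\mathcal{P}_{p,T}}\le 2C_0(T)R$, so that each of these three norms is bounded by $CC_0(T)R$; since $C_0(T)$ is non-decreasing, a factor $C_0(T)$ can always be absorbed into the generic constant on the right. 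I would also recall that $u(\cdot,t)=\int_0^t v(\cdot,s)\,\d s$ satisfies $\dot u=v$ and $u(\cdot,0)=0$, so that Lemmas~\ref{lemma-sigmax} and~\ref{lemma-cof} apply directly with $\nabla\dot u=\nabla v$.

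\textbf{Bound on $F(v)$ and its Lipschitz estimate.} Since $F(v)=\divg(\sigma(\nabla u))$, the $\mathcal{F}_{p,T}(\Omega)=\L^p(0,T;\LL^p(\Omega))$-norm of $F(v)$ is controlled by $\|\sigma(\nabla u)\|_{\L^p(0,T;\WWW^{1,p}(\Omega))}$, because the divergence lowers the spatial regularity by one. Estimate~\eqref{estlip-sigma0} of Lemma~\ref{lemma-sigmax} then gives directly $\|F(v)\|_{\mathcal{F}_{p,T}(\Omega)}\le C(T^{1/p}\|\sigma(0)\|_{\WWW^{1,p}(\Omega)}+C_R(T)T\|\nabla v\|_{\L^p(0,T;\WWW^{1,p}(\Omega))})$, and bounding $\|\nabla v\|_{\L^p(0,T;\WWW^{1,p}(\Omega))}\le\|v\|_{\dot{\mathcal{U}}_{p,T}(\Omega)}\le CR$ (absorbing $C_0(T)$) yields~\eqref{Zest1}. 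Likewise~\eqref{Zest11} follows by the same reasoning from the Lipschitz estimate~\eqref{estlip-sigma}, since $\|\nabla v_1-\nabla v_2\|_{\L^p(0,T;\WWW^{1,p}(\Omega))}\le\|v_1-v_2\|_{\dot{\mathcal{U}}_{p,T}(\Omega)}$.

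\textbf{Bound on $G(v,\mathfrak{p})$ and $H(v)$.} For $G(v,\mathfrak{p})=-\sigma(\nabla u)n+\mathfrak{p}(\I-\cof\Phi(u))n$, I would handle the two summands separately. The trace of $\sigma(\nabla u)$ on $\Gamma_N$ is controlled in $\L^p(0,T;\WW^{1-1/p,p}(\Gamma_N))$ by $\|\sigma(\nabla u)\|_{\L^p(0,T;\WWW^{1,p}(\Omega))}$ via the standard trace theorem, and in the time-fractional norm $\W^{1/2-1/(2p),p}(0,T;\LL^p(\Gamma_N))$ by interpolating between $\W^{1,p}$ in time (Lemma~\ref{lemma-cof}-type control on $\partial_t\sigma(\nabla u)$, or directly from $\dot u$-regularity) and $\L^p$ in time, using~\eqref{basicest3} to pull out a power $T^{(1/2+1/(2p))/p}=T^{(p+1)/(2p^2)}$; the constant-in-time part $\|\sigma(0)\|$ contributes the $C\|\sigma(0)\|_{\WWW^{1,p}(\Omega)}$ term since it has no $T$-smallness. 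The second summand is a product of $\mathfrak{p}\in\mathcal{P}_{p,T}=\W^{1/2-1/(2p),p}(0,T;\R)$ with $(\I-\cof\Phi(u))n\in\W^{1,p}(0,T;\WW^{1-1/p,p}(\Gamma_N))$ which vanishes at $t=0$ (since $u(\cdot,0)=0$ gives $\cof\Phi(u)(\cdot,0)=\I$); applying Lemma~\ref{lemma-Brezis} in the time variable with $\beta=1/2-1/(2p)$, together with Lemma~\ref{lemma-cof}'s estimate~\eqref{est-cof1} to bound $\|\I-\cof\Phi(u)\|_{\W^{1,p}(0,T;\WWW^{1,p}(\Omega))}\le C(R+R^{d-1})$ (hence $\le C(R+R^d)$ after noting $d\in\{2,3\}$), and $\|\mathfrak{p}\|\le CR$, produces the $CT^{(p+1)/(2p^2)}(CC_R(T)TR+R^2+R^d)$ structure of~\eqref{Zest2}. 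For $H(v)=\int_{\Gamma_N}v\cdot(\I-\cof\Phi(u))n\,\d\Gamma_N$, I would write it as an integral over $\Gamma_N$ of a product of the trace of $v$ and $(\I-\cof\Phi(u))n$: the trace of $v$ lies in $\W^{1-1/(2p),p}(0,T;\LL^p(\Gamma_N))\cap\L^p(0,T;\WW^{2-1/p,p}(\Gamma_N))$ by~\eqref{est-trace-emb} with norm $\le CR$, while $(\I-\cof\Phi(u))n$ is as above and vanishes at $t=0$; the relevant fractional order for $\mathcal{H}_{p,T}=\W^{1-1/(2p),p}(0,T;\R)$ is $\beta=1-1/(2p)$, and Lemma~\ref{lemma-Brezis} pulls out $T^{(1-\beta)/p}=T^{1/(2p^2)}$, giving~\eqref{Zest3}; the $\L^\infty$-in-time part of the $\mathcal{H}_{p,T}$-norm is estimated cruder, again with a power $T^{1/(2p^2)}$ or better, using $v_{|\Gamma_N}\in\L^\infty(0,T;\LL^p(\Gamma_N))$.

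\textbf{Lipschitz estimates~\eqref{Zest12}--\eqref{Zest13}.} These follow the same pattern but using the Lipschitz versions of the lemmas: for $G$ I would telescope $G(v_1,\mathfrak{p}_1)-G(v_2,\mathfrak{p}_2) = -(\sigma(\nabla u_1)-\sigma(\nabla u_2))n + (\mathfrak{p}_1-\mathfrak{p}_2)(\I-\cof\Phi(u_1))n + \mathfrak{p}_2(\cof\Phi(u_2)-\cof\Phi(u_1))n$, applying~\eqref{estlip-sigma} to the first term, Lemma~\ref{lemma-Brezis} plus~\eqref{est-cof1} to the second, and Lemma~\ref{lemma-Brezis} plus~\eqref{est-cof12} (with $\|\mathfrak{p}_2\|\le CR$) to the third; for $H$ I would split $H(v_1)-H(v_2)=\int_{\Gamma_N}(v_1-v_2)\cdot(\I-\cof\Phi(u_1))n + \int_{\Gamma_N}v_2\cdot(\cof\Phi(u_2)-\cof\Phi(u_1))n$ and estimate each term with Lemma~\ref{lemma-Brezis} in time, using~\eqref{est-cof1}, \eqref{est-cof12} and the trace bound~\eqref{est-trace-emb}. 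The main obstacle throughout is bookkeeping: one must be scrupulous that every time-fractional norm on $\Gamma_N$ is handled by interpolating an $\L^p$-in-time bound (which carries a $T^{1/p}$) against a $\W^{1,p}$-in-time bound via~\eqref{basicest3} or Lemma~\ref{lemma-Brezis}, that the vanishing of $\I-\cof\Phi(u)$ and of $\nabla u$ at $t=0$ is invoked wherever a factor $T$ is needed (this is precisely what makes Lemma~\ref{lemma-Brezis} and estimate~\eqref{basicest2} applicable), and that no hidden $T$-dependence of the trace constants spoils the uniform constant $C$ — the latter is guaranteed by the $T$-independence asserted in~\eqref{est-trace-emb}, \eqref{est-trace-emb2} and in Lemma~\ref{lemma-Brezis}. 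Getting the exact exponents $(p+1)/(2p^2)$ and $1/(2p^2)$ to come out requires matching $(1-\beta)/p$ with $\beta\in\{1/2-1/(2p),\,1-1/(2p)\}$, and checking $\beta>1/p$ (which holds since $p>3$), so that Lemma~\ref{lemma-Brezis} is legitimately applicable.
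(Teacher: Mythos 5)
Your proposal follows exactly the paper's own strategy: bound $F$ via the $\mathbb{W}^{1,p}\to\mathbf{L}^p$ divergence and Lemma~\ref{lemma-sigmax}; bound $G$ and $H$ by splitting into the $\sigma$-trace part and the $\mathfrak{p}(\mathrm{I}-\mathrm{cof}\,\Phi(u))$ (resp.\ $v\cdot(\mathrm{I}-\mathrm{cof}\,\Phi(u))n$) part, using~\eqref{basicest3} to reduce the $\mathrm{W}^{\alpha,p}$-in-time norms to $\mathrm{W}^{1,p}$, Lemma~\ref{lemma-Brezis} with $\beta=1/(2p')$ (for $G$) and $\beta=1-1/(2p)$ (for $H$) to extract the powers $T^{(p+1)/(2p^2)}$ and $T^{1/(2p^2)}$, Lemma~\ref{lemma-cof} for the cofactor factors, and the telescoping decompositions for the Lipschitz bounds~\eqref{Zest12}--\eqref{Zest13}. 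This matches the paper's proof in both structure and the choice of key lemmas, so there is nothing to add.
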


\begin{proof}
Recall~\eqref{NL0}, where we denote $u(\cdot,t) = \displaystyle \int_0^t v(\cdot,s)\d t$, assuming $u_0 = 0$. We have
\begin{equation*}
\| F(v) \|_{\mathcal{F}_{p,T}(\Omega)} = 
\|\divg(\sigma(\nabla u)) \|_{\L^p(0,T;\LL^p(\Omega))} \leq
C\|\sigma(\nabla u) \|_{\L^p(0,T;\WWW^{1,p}(\Omega))}
\end{equation*}
and then~\eqref{Zest1} follows from~\eqref{estlip-sigma0}. Similarly, estimate~\eqref{Zest11} follows from~\eqref{estlip-sigma}. Next, we derive the following estimate, which is non-sharp, but sufficient for our purpose
\begin{eqnarray}
\| G(v,\mathfrak{p}) \|_{\mathcal{G}_{p,T}(\Gamma_N)} & \leq &
\|\sigma(\nabla u)n \|_{\mathcal{G}_{p,T}(\Gamma_N)} +
\left\| \mathfrak{p}\big(\I-\cof(\Phi(u))\big) \right\|_{\mathcal{G}_{p,T}(\Gamma_N)}
 \nonumber \\
& \leq & C\left(\|\sigma(\nabla u)n \|_{\W^{1/(2p'),p}(0,T;\WW^{1-1/p,p}(\Gamma_N))} +
\left\| \mathfrak{p}\big(\I-\cof(\Phi(u))\big)n \right\|_{\W^{1/(2p'),p}(0,T;\WW^{1-1/p,p}(\Gamma_N))}\right.
\nonumber \\
& & \left.+\|\sigma(\nabla u)n \|_{\L^{\infty}(0,T;\LL^{p}(\Gamma_N))} +
\left\| \mathfrak{p}\big(\I-\cof(\Phi(u))\big)n \right\|_{\L^{\infty}(0,T;\LL^{p}(\Gamma_N))}\right)
\nonumber \\
& \leq & C\left(\|\sigma(\nabla u) \|_{\W^{1/(2p'),p}(0,T;\WWW^{1,p}(\Omega))} +
\left\| \mathfrak{p}\big(\I-\cof(\Phi(u))\big) \right\|_{\W^{1/(2p'),p}(0,T;\WWW^{1,p}(\Omega))}\right. \nonumber \\
& & \left. + \|\sigma(\nabla u) \|_{\L^{\infty}(0,T;\WWW^{1,p}(\Omega))} +
\left\| \mathfrak{p}\|_{\L^{\infty}(0,T;\R)}\|\big(\I-\cof(\Phi(u))\big) \right\|_{\L^{\infty}(0,T;\WWW^{1,p}(\Omega))}\right).\label{estk9}
\end{eqnarray}
Using~\eqref{basicest3} with $\alpha = 1/(2p')$, that is $(1-\alpha)/p = (p+1)/2p^2$, we obtain
\begin{equation*}
\|\sigma(\nabla u) \|_{\W^{1/(2p'),p}(0,T;\WW^{1,p}(\Omega))} 
\leq CT^{(p+1)/2p^2}
\|\sigma(\nabla u) \|_{\W^{1,p}(0,T;\WW^{1,p}(\Omega))}.
\end{equation*}
Note that $t\mapsto \I-\cof(\Phi(u))$ vanishes at $t=0$, and from Lemma~\ref{lemma-Brezis} with $\beta = 1/(2p') = \alpha$, we have
\begin{equation*}
\left\| \mathfrak{p}\big(\I-\cof(\Phi(u))\big) \right\|_{\W^{1/(2p'),p}(0,T;\WW^{1,p}(\Omega))}
\leq
CT^{(p+1)/2p^2}\|\mathfrak{p}\|_{\mathcal{P}_{p,T}}
\| \I-\cof(\Phi(u)) \|_{\W^{1,p}(0,T; \WWW^{1,p}(\Omega))},
\end{equation*}
From~\eqref{basicest1} we estimate
\begin{equation*}
\begin{array} {l}
\| \sigma(\nabla u) \|_{\L^{\infty}(0,T; \WWW^{1,p}(\Omega))} \leq 
\| \sigma(0) \|_{\WWW^{1,p}(\Omega)} + 
T^{1/p'} \|\sigma(\nabla u) \|_{\W^{1,p}(0,T;\WWW^{1,p}(\Omega))}, \\
\left\| \I-\cof(\Phi(u)) \right\|_{\W^{1/(2p'),p}(0,T;\WW^{1,p}(\Omega))}
\leq
T^{1/p'}
\| \I-\cof(\Phi(u)) \|_{\W^{1,p}(0,T; \WWW^{1,p}(\Omega))}.
\end{array}
\end{equation*}
Since $(p+1)/2p^2 \leq 1/p'$, we have $T^{1/p'} \leq T^{(p+1)/2p^2}$ when $T\leq 1$. Therefore, from~\eqref{estk9} we deduce
\begin{equation*}
\begin{array} {rcl}
\| G(v,\mathfrak{p}) \|_{\mathcal{G}_{p,T}(\Gamma_N)} & \leq &
C\| \sigma(0) \|_{\WWW^{1,p}(\Omega)} \\
& &  + CT^{(p+1)/2p^2}
\left(
\|\sigma(\nabla u) \|_{\W^{1,p}(0,T;\WW^{1,p}(\Omega))} +
\|\mathfrak{p}\|_{\mathcal{P}_{p,T}}
\| \I-\cof(\Phi(u)) \|_{\W^{1,p}(0,T; \WWW^{1,p}(\Omega))}\right) .
\end{array}
\end{equation*}
By using~\eqref{estlip-sigma0} and~\eqref{est-cof1} we next obtain
\begin{equation*}
\| G(v,\mathfrak{p}) \|_{\mathcal{G}_{p,T}(\Gamma_N)}  \leq  
C\|\sigma(0)\|_{\WWW^{1,p}(\Omega)} +
CT^{(p+1)/2p^2}
\left( CC_R(T)TR + R^2(1+R^{d-2})\right),
\end{equation*}
and thus~\eqref{Zest2}. To prove estimate~\eqref{Zest12}, we write
\begin{equation*}
G(v_1,\mathfrak{p}_1) - G(v_2,\mathfrak{p}_2) =
(\sigma(\nabla u_1)) - \sigma(\nabla u_2))n
+ (\mathfrak{p}_1-\mathfrak{p}_2)\left(\I-\cof (\Phi(u_1))\right)n + \mathfrak{p}_2\left(\cof(\Phi(u_2)) - \cof(\Phi(u_1))\right)n, 
\end{equation*}
and we obtain as previously
\begin{equation*}
\begin{array} {rcl}
\|G(v_1,\mathfrak{p}_1) - G(v_2,\mathfrak{p}_2)\|_{\mathcal{G}_{p,T}(\Gamma_N)} & \leq & CT^{(p+1)/2p^2}\left(
\|\sigma(\nabla u_1)) - \sigma(\nabla u_2)\|_{\W^{1,p}(0,T;\WWW^{1,p}(\Omega))} 
\right. \\
& & 
+\|\mathfrak{p}_1-\mathfrak{p}_2\|_{\mathcal{P}_{p,T}} \|\I-\cof (\Phi(u_1))\|_{\W^{1,p}(0,T;\WWW^{1,p}(\Omega))}
\\
& & \left.
+\|\mathfrak{p}_2\|_{\mathcal{P}_{p,T}} \|\cof (\Phi(u_1)) - \cof(\Phi(u_2))\|_{\W^{1,p}(0,T;\WWW^{1,p}(\Omega))}
\right).
\end{array}
\end{equation*}
We then derive~\eqref{Zest12} by invoking~\eqref{estlip-sigma} and~\eqref{est-cof1}-\eqref{est-cof12}. We estimate the term~\eqref{NL3} as follows
\begin{eqnarray}
|H(v)|_{\R} & \leq &  C\|v\|_{\L^p(\Gamma_N)}\|(\I-\cof(\Phi(u)))n\|_{\LL^{\infty}(\Gamma_N)}
 \leq C\|v\|_{\L^p(\Gamma_N)}\|\I-\cof(\Phi(u))\|_{\WWW^{1,p}(\Omega)}, 
\nonumber \\
\|H(v) \|_{\mathcal{H}_{p,T}} & \leq &
C\left(\left\|
\|v\|_{\LL^p(\Gamma_N))} \|\I-\cof(\Phi(u))\|_{\WWW^{1,p}(\Omega)}
\right\|_{\W^{1-1/2p,p}(0,T;\R)} \right. \nonumber \\
& & \left.+ \|v\|_{\L^{\infty}(0,T;\LL^p(\Gamma_N))}
\|\I-\cof(\Phi(u))\|_{\L^{\infty}(0,T;\WWW^{1,p}(\Omega))}
\right).
\label{est-H}
\end{eqnarray}
Lemma~\ref{lemma-Brezis} with $\beta = 1- 1/(2p)$, that is $(1-\beta)/p = 1/2p^2$, enables us to estimate the first term in the right-hand-side of~\eqref{est-H} as follows
\begin{equation*}
\begin{array} {rcl}
\left\|
\|v\|_{\LL^p(\Gamma_N))} \|\I-\cof(\Phi(u))\|_{\WWW^{1,p}(\Omega)}
\right\|_{\W^{1-1/2p,p}(0,T;\R)} & \leq & 
CT^{1/2p^2} \\
& & \times \left(\| v \|_{\W^{1-1/(2p),p}(0,T;\LL^p(\Gamma_N))}
+ \| v \|_{\L^{\infty}(0,T;\LL^p(\Gamma_N))} \right)\\
& & \times
\| \I-\cof(\Phi(u)) \|_{\W^{1,p}(0,T;\WWW^{1,p}(\Omega))}.  
\end{array}
\end{equation*}
Further, from the trace embedding inequality~\eqref{est-trace-emb} and~\eqref{est-cof1} we deduce
\begin{equation}
\left\|
\|v\|_{\LL^p(\Gamma_N))} \|\I-\cof(\Phi(u))\|_{\WWW^{1,p}(\Omega)}
\right\|_{\W^{1-1/2p,p}(0,T;\R)} \leq 
CT^{1/2p^2}R^2 \left(1+R^{d-2} \right)=
CT^{1/2p^2}(R^2+R^d). \label{est-dsk1}
\end{equation}
We estimate the second term of~\eqref{est-H} by using~\eqref{basicest1} and~\eqref{est-cof1} as follows
\begin{equation}
\|v\|_{\L^{\infty}(0,T;\LL^p(\Gamma_N))}
\|\I-\cof(\Phi(u))\|_{\L^{\infty}(0,T;\WWW^{1,p}(\Omega))} \leq 
CRT^{1/p'} (R+R^{d-1}) = CT^{1/p'}(R^2+R^d).  \label{est-dsk2}
\end{equation}
Since $1/2p^2\leq 1/p'$, we have $T^{1/p'} \leq T^{1/2p^2}$ when $T\leq 1$. Thus, combining~\eqref{est-dsk1} and~\eqref{est-dsk2} in~\eqref{est-H} leads us to~\eqref{Zest3}. In order to obtain~\eqref{Zest13}, we write
\begin{equation*}
H(v_1) - H(v_2) = 
\int_{\Gamma_N} (v_1-v_2)\cdot (\I-\cof(\Phi(u_1)))n\, \d \Gamma_N
+ \int_{\Gamma_N} v_2\cdot \left(\cof(\Phi(u_2)) - \cof(\Phi(u_1))\right)n\, \d\Gamma_N,
\end{equation*}
and proceed as previously, using in particular~\eqref{est-cof12}, in order to get~\eqref{Zest13} and conclude the proof.
\end{proof}

\subsection{Statement of local-in-time wellposedness} \label{sec-mainTh}
We can now prove existence of a unique local-in-time solution to system~\eqref{sysmain2}.

\begin{theorem} \label{th-wellposed} \label{th-locexist}
Under Assumption~$\mathbf{A1}$, there exists $T_0>0$ such that if
\begin{equation*}
f \in \mathcal{F}_{p,T_0}(\Omega), \quad g \in \mathcal{F}_{p,T_0}(\Omega), 
\quad (0, \udotini) \in \mathcal{U}_p^{(0,1)}(\Omega),
\end{equation*}
satisfy the compatibility conditions $\displaystyle \kappa\frac{\p \dot{u}_0}{\p n} + \sigma(0)n = g(\cdot,0)$ on~$\Gamma_N$ and $\displaystyle \int_{\Gamma_N} \dot{u}_0 \cdot n\, \d \Gamma_N = 0$, then system~\eqref{sysmain2} admits a unique solution $(u,\mathfrak{p}) \in \mathcal{U}_{p,T}(\Omega) \times \mathcal{P}_{p,T}$ for all $0 < T\leq T_0$. Further, the following alternative holds:
\begin{itemize}
\item[(i)] Either $T_0 = \infty$,

\item[(ii)] or $\displaystyle \lim_{t \rightarrow T_0}\left(\| (u(t),\dot{u}(t)) \|_{\mathcal{U}_p^{(0,1)}(\Omega)}\right) = \infty$.
\end{itemize}
\end{theorem}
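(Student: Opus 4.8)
The plan is to prove Theorem~\ref{th-locexist} via the Banach fixed-point theorem applied to the mapping~$\mathcal{K}$ defined in~\eqref{def-contraction}, followed by a continuation argument for the blow-up alternative. First I would verify that~$\mathcal{K}$ maps the ball~$\mathcal{B}_R(T)$ into itself. For~$(v_a,\mathfrak{p}_a) \in \mathcal{B}_R(T)$, the data $(f+F(v_a), g+G(v_a,\mathfrak{p}_a), H(v_a), \udotini)$ have the required regularity and satisfy the compatibility conditions (automatically, since~$u_0=0$), as observed just before~\eqref{def-ball}. Applying Corollary~\ref{coro-linNH} gives~$(v_b,\mathfrak{p}_b) \in \dot{\mathcal{U}}_{p,T}(\Omega) \times \mathcal{P}_{p,T}$ with
\begin{equation*}
\|v_b\|_{\dot{\mathcal{U}}_{p,T}(\Omega)} + \|v_b\|_{\L^{\infty}(0,T;\LL^p(\Gamma_N))} + \|\mathfrak{p}_b\|_{\mathcal{P}_{p,T}} \leq C_0(T)\left(\|\udotini\|_{\WW^{2/p',p}(\Omega)} + \|f\|_{\mathcal{F}_{p,T}} + \|g\|_{\mathcal{G}_{p,T}} + \|F(v_a)\|_{\mathcal{F}_{p,T}} + \|G(v_a,\mathfrak{p}_a)\|_{\mathcal{G}_{p,T}} + \|H(v_a)\|_{\mathcal{H}_{p,T}}\right).
\end{equation*}
I would set~$R := \|\udotini\|_{\WW^{2/p',p}(\Omega)} + \|f\|_{\mathcal{F}_{p,T_0}} + \|g\|_{\mathcal{G}_{p,T_0}} + \|\sigma(0)\|_{\WWW^{1,p}(\Omega)} + 1$ (say), and then use the estimates~\eqref{Superzlip1} of Proposition~\ref{prop-estxlip}: each nonlinear contribution is bounded by a positive power of~$T$ times a polynomial in~$R$ (with~$C_R(T)$ non-decreasing in~$T$), so for~$T \leq T_1(R)$ small enough the sum of the nonlinear terms is~$\leq R$, giving the right-hand side~$\leq 2C_0(T)R$, i.e.~$(v_b,\mathfrak{p}_b) \in \mathcal{B}_R(T)$.

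Next I would show~$\mathcal{K}$ is a contraction on~$\mathcal{B}_R(T)$ for~$T$ further reduced. Given~$(v_1,\mathfrak{p}_1),(v_2,\mathfrak{p}_2) \in \mathcal{B}_R(T)$, the difference~$\mathcal{K}(v_1,\mathfrak{p}_1) - \mathcal{K}(v_2,\mathfrak{p}_2)$ solves the linear system~\eqref{sys-Pruess-constraint-NH} with data~$(F(v_1)-F(v_2),\, G(v_1,\mathfrak{p}_1)-G(v_2,\mathfrak{p}_2),\, H(v_1)-H(v_2),\, 0)$ and zero initial datum, and the compatibility conditions hold trivially. Applying the estimate~\eqref{est-Pruess3} of Corollary~\ref{coro-linNH} together with the Lipschitz estimates~\eqref{Superzlip2} yields
\begin{equation*}
\|\mathcal{K}(v_1,\mathfrak{p}_1) - \mathcal{K}(v_2,\mathfrak{p}_2)\|_{\dot{\mathcal{U}}_{p,T}(\Omega) \times \mathcal{P}_{p,T}} \leq C_0(T)\, T^{\theta}\, P(R)\left(\|v_1-v_2\|_{\dot{\mathcal{U}}_{p,T}(\Omega)} + \|v_1-v_2\|_{\L^{\infty}(0,T;\LL^p(\Gamma_N))} + \|\mathfrak{p}_1-\mathfrak{p}_2\|_{\mathcal{P}_{p,T}}\right)
\end{equation*}
for some~$\theta > 0$ (the smallest of the exponents appearing, e.g.~$1/2p^2$) and some polynomial~$P$; here I should also observe that the~$\L^\infty(0,T;\LL^p(\Gamma_N))$-seminorm of the difference is itself controlled through~\eqref{est-Pruess3}, so the right-hand side is genuinely the full norm on~$\mathcal{B}_R(T)$. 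Choosing~$T_0 = T_0(R) \leq \min\{1, T_1\}$ so that~$C_0(T_0)T_0^{\theta}P(R) \leq 1/2$ makes~$\mathcal{K}$ a~$1/2$-contraction; Banach's theorem then gives a unique fixed point~$(v,\mathfrak{p}) \in \mathcal{B}_{R}(T_0)$, hence a unique solution of~\eqref{sysmain3}, and setting~$u(\cdot,t) = \int_0^t v(\cdot,s)\d s$ recovers~$(u,\mathfrak{p}) \in \mathcal{U}_{p,T_0}(\Omega)\times\mathcal{P}_{p,T_0}$ of~\eqref{sysmain2}. The same argument on any~$[0,T]$ with~$T\leq T_0$ gives the solution there, and uniqueness on~$[0,T_0]$ forces consistency of these solutions.

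For the blow-up alternative, I would argue by contradiction: let~$T_0$ be the supremum of all~$\tau>0$ for which a solution exists on~$[0,\tau]$ (restricting~$f,g$ accordingly), suppose~$T_0 < \infty$ and~$\liminf_{t\to T_0}\|(u(t),\dot{u}(t))\|_{\mathcal{U}_p^{(0,1)}(\Omega)} < \infty$. Then there is a sequence~$t_k \uparrow T_0$ with~$(u(t_k),\dot{u}(t_k))$ bounded in the trace space~$\mathcal{U}_p^{(0,1)}(\Omega)$; choosing~$t_k$ close enough to~$T_0$ and using~$(u(t_k),\dot u(t_k))$ as a new initial datum, the local existence time obtained from the construction above depends only on the norm of the initial datum (and of~$f,g$ on the remaining interval), so it is bounded below by some~$\delta>0$ independent of~$k$. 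Restarting at~$t_k$ with~$t_k + \delta > T_0$ and invoking uniqueness to glue the solutions contradicts the maximality of~$T_0$; hence case~(ii) holds. The main obstacle is the bookkeeping in the second and third paragraphs: one must make sure that the local existence time in the contraction step is controlled purely by the size~$R$ of the data — in particular that~$C_0(T)$ is non-decreasing and~$C_R(T)$ non-decreasing in both~$R$ and~$T$ (which is exactly why these monotonicity statements were built into Proposition~\ref{prop-Pruess}, Corollary~\ref{coro-linNH} and Lemma~\ref{lemma-sigmax}) — and that the new initial data~$(u(t_k),\dot u(t_k))$ produced along the flow still satisfy the compatibility conditions required by Corollary~\ref{coro-linNH}, which holds because they are traces of the already-constructed solution.
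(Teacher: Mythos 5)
Your proposal is correct and follows essentially the same route as the paper: the Banach fixed point argument on the ball $\mathcal{B}_R(T)$, using Corollary~\ref{coro-linNH} for the linear estimate and Proposition~\ref{prop-estxlip} for the nonlinear/Lipschitz bounds, followed by the standard continuation argument for the blow-up alternative. Your version is in places slightly more careful than the paper's (using $\liminf$ with a sequence $t_k$ rather than a bare $\lim$, and explicitly noting that the restarted data inherit the compatibility conditions), but these are refinements of the same argument rather than a different approach; note however that the restart at $t_k$ produces $u(t_k)\neq 0$, so the continuation step in both your proof and the paper's implicitly relies on the (unproven but asserted in Remark~\ref{remark-u0}) extension of the fixed-point construction to nonzero $u_0$.
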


\begin{proof}
Let us show that the mapping $\mathcal{K}$ defined in~\eqref{def-contraction} is a contraction in $\mathcal{B}_R(T)$ (defined in~\eqref{def-ball}) for $R>0$ large enough and $T>0$ small enough, by using the Banach fixed point theorem. Let us first prove the stability of $\mathcal{B}_R(T)$ by $\mathcal{K}$. Let be $(v_a,\mathfrak{p}_a) \in \mathcal{B}_R(T)$, and denote $(v_b,\mathfrak{p}_b) := \mathcal{K}(v_a,\mathfrak{p}_a)$. From Corollary~\ref{coro-linNH}, estimate~\eqref{est-Pruess3} yields
\begin{equation}
\begin{array} {rcl}
\|v_b\|_{\dot{\mathcal{U}}_{p,T}(\Omega)}
+ \|v_b\|_{\L^{\infty}(0,T;\LL^p(\Gamma_N))}
 + \|p_b\|_{\mathcal{P}_{p,T}}
& \leq & C_0(T)\left(
\|\udotini\|_{\WW^{2/{p'},p}(\Omega)} 
+ \|f\|_{\mathcal{F}_{p,T}(\Omega)} 
+ \|g\|_{\mathcal{G}_{p,T}(\Omega)}
\right. \nonumber \\
& & \left. + \|F(v_a)\|_{\mathcal{F}_{p,T}(\Omega)} 
+ \|G(v_a,\mathfrak{p}_a)\|_{\mathcal{G}_{p,T}(\Omega)}  
+ \|H(v_a)\|_{\mathcal{H}_{p,T}} 
\right), 
\end{array}
\label{est-stability}
\end{equation}
where $C_0(T) >0$ is non-decreasing with respect to~$T$, and where the right-hand-sides are defined by~\eqref{NL0}--\eqref{NL3}. Recall that we have first assumed $T\leq 1$. Using the estimates~\eqref{Superzlip1}, we deduce
\begin{equation*}
\begin{array} {l}
 \|v_b\|_{\dot{\mathcal{U}}_{p,T}(\Omega)}
 + \|v_b\|_{\L^{\infty}(0,T;\LL^p(\Gamma_N))}
 + \|\mathfrak{p}_b\|_{\mathcal{P}_{p,T}}
\\
 \leq  C_0(T)\left(
\|\udotini\|_{\WW^{2/{p'},p}(\Omega)} 
+ \|f\|_{\mathcal{F}_{p,T}(\Omega)} 
+ \|g\|_{\mathcal{G}_{p,T}(\Omega)}
+ C\|\sigma(0)\|_{\WWW^{1,p}(\Omega)} + CT^{1/2p^2}(R+R^2+R^d) \right).
\end{array}
\end{equation*}
Now choose $R>0$ large enough, more specifically
\begin{equation*}
R \geq \|\udotini\|_{\WW^{2/{p'},p}(\Omega)} 
+ \|f\|_{\mathcal{F}_{p,T}(\Omega)} 
+ \|g\|_{\mathcal{G}_{p,T}(\Omega)}
+ C\|\sigma(0)\|_{\WWW^{1,p}(\Omega)},
\end{equation*}
and $T>0$ small enough, namely such that $CT^{1/2p^2}(R+R^2+R^d) \leq R$. 
Therefore we obtain
\begin{equation*}
\|v_b\|_{\dot{\mathcal{U}}_{p,T}(\Omega)} 
 + \|v_b\|_{\L^{\infty}(0,T;\LL^p(\Gamma_N))}
+ \|\mathfrak{p}_b\|_{\mathcal{P}_{p,T}}
 \leq  2C_0(T)R,
\end{equation*}
meaning that $(v_b,\mathfrak{p}_b) = \mathcal{K}(v_a,\mathfrak{p}_a) \in \mathcal{B}_R(T)$. Therefore $\mathcal{B}_R(T)$ is stable by $\mathcal{K}$. Further, considering~$(v_a,\mathfrak{p}_a)$ and~$(v_b,\mathfrak{p}_b)$ in~$\mathcal{B}_R(T)$, the difference $(\overline{v},\overline{\mathfrak{p}}) := (v_a-v_b,\mathfrak{p}_a-\mathfrak{p}_b)$ satisfies system~\eqref{sys-Pruess-constraint-NH} with $(F,G,H,v_0)$ replaced by $(F(v_a)-F(v_b), G(v_a,\mathfrak{p}_a)-G(v_b,\mathfrak{p}_b), H(v_a)-H(v_b), 0)$ as data. Therefore it satisfies the estimate~\eqref{est-Pruess3} with the corresponding right-hand-sides, namely
\begin{equation*}
\begin{array} {l}
\|\overline{v}\|_{\dot{\mathcal{U}}_{p,T}(\Omega)} 
 + \|\overline{v}\|_{\L^{\infty}(0,T;\LL^p(\Gamma_N))}
+ \|\overline{\mathfrak{p}}\|_{\mathcal{P}_{p,T}} \\
 \leq C_0(T)\left(
\|F(v_a)-F(v_b)\|_{\mathcal{F}_{p,T}(\Omega)}\| + 
\|G(v_a,\mathfrak{p}_a)-G(v_b,\mathfrak{p}_b)\|_{\mathcal{G}_{p,T}(\Gamma_N)} + \|H(v_a)-H(v_b)\|_{\mathcal{H}_{p,T}} \right).
\end{array}
\end{equation*}
By using the estimates~\eqref{Superzlip2} we obtain
\begin{equation*}
\begin{array} {l}
\|\overline{v}\|_{\dot{\mathcal{U}}_{p,T}(\Omega)} 
 + \|\overline{v}\|_{\L^{\infty}(0,T;\LL^p(\Gamma_N))}
+ \|\overline{\mathfrak{p}}\|_{\mathcal{P}_{p,T}} \\
 \leq C_0(T) CT^{1/2p^2}\left(
\|\overline{v}\|_{\dot{\mathcal{U}}_{p,T}(\Omega)} 
 + \|\overline{v}\|_{\L^{\infty}(0,T;\LL^p(\Gamma_N))}
+ \|\overline{\mathfrak{p}}\|_{\mathcal{P}_{p,T}}
\right),
\end{array}
\end{equation*}
and, again by choosing $T>0$ small enough, we make $\mathcal{K}$ a contraction in~$\mathcal{B}_R(T)$. Thus there exists $T_0>0$ such that for all $T\leq T_0$ system~\eqref{sysmain2} admits unique solution $(v,\mathfrak{p})$. The alternative is obtained classically via a continuation argument: Defining $T_0$ as the maximal time of existence of the solution $(u,\dot{u})$ so obtained, namely
\begin{equation*}
T_0 = \sup \left(\mathcal{T} := \left\{ T >0 \mid  (u,\dot{u}) \in \mathcal{U}_{p,T}(\Omega) \times  \dot{\mathcal{U}}_{p,T}(\Omega) \text{ exists} \right\}\right).
\end{equation*}
We just showed that the set~$\mathcal{T}$ is non-empty. By contradiction, assume that $T_0< \infty$ and that 
\begin{equation*}
\displaystyle \lim_{t \rightarrow T_0}\left(\| (u(t),\dot{u}(t)) \|_{\mathcal{U}_p^{(0,1)}(\Omega)}\right) < \infty.
\end{equation*}
Then $(u(T_0),\dot{u}(T_0)) \in \mathcal{U}_p^{(0,1)}(\Omega)$. From what precedes we can extend the solution to an interval $(T_0, T_0+\eta)$ for some $\eta>0$. This contradicts the definition $T_0$ as an upper bound, and concludes the proof.
\end{proof}

\newpage
\appendix

\section{Modeling aspects} \label{sec-modeling}
In this section we address the modeling aspects of the problem, in particular the global injectivity constraint and the way system~\eqref{sysmain} is derived, as well as the form of the control operator.

\subsection{On the global injectivity condition and the invertibility condition} \label{sec-modeling1}

The so-called global injectivity condition, studied by Ciarlet-Ne\v{c}as~\cite{Necas1987}, writes
\begin{equation}
\int_{\Omega} \left( \det\left(I+\nabla u(\cdot,t) \right) - \det(\I+\nabla u_0) \right)\d\Omega =  0, \quad \text{for all } t\in (0,T). \label{contdet}
\end{equation}
After derivation in time, and by using the Piola identity and the divergence formula, it is equivalent to
\begin{equation*}
\int_{\Omega} \cof(\I+\nabla u) : \nabla \dot{u} \, \d \Omega = 
\int_{\Omega} \divg\left(\cof(\I+\nabla u)^T\dot{u} \right) \d \Omega =
\int_{\p \Omega}  \dot{u} \cdot \cof\left(I+\nabla u \right)n\, \d\Omega = 0.
\end{equation*}
Since we assume that $\dot{u} = 0 $ on $\Gamma_D$, we then consider the following equivalent constraint:
\begin{equation}
\int_{\Gamma_N}  \dot{u} \cdot \cof\left(I+\nabla u \right)n\, \d\Omega = 0.
\label{eq-constraint}
\end{equation}
Furthermore, for the sake of consistency, modeling elastic deformations requires to guarantee that the mapping $\Id +u(\cdot,t)$ is invertible for $t \geq 0$. Actually, assuming that $\Id +u(\cdot,0)$ is invertible, and under regularity assumptions, this invertibility condition can be relaxed, provided that~$t>0$ is small enough. More precisely, we have the following result:

\begin{lemma}
There exists a constant $C>0$ such that for all $u\in \W^{1,p}(0,T; \WW^{1,p}(\Omega))$ the following estimate
\begin{equation*}
\| \det(\I+\nabla u(\cdot,t)) - \det(\I + \nabla u(\cdot,0))\|_{\L^1(\Omega)}
\leq  t^{1-1/p} C\left(1+ \| \nabla u \|^{d-1}_{\L^{\infty}(0,T;\LL^p(\Omega))} \right) 
\| \nabla \dot{u} \|_{\L^p(0,T;\LL^p(\Omega))}
\end{equation*}
 holds for all $t \in [0,T]$.
\end{lemma}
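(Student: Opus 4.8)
The plan is to reduce the estimate to a pointwise-in-time bound on $\det(\I+\nabla u(\cdot,t)) - \det(\I+\nabla u(\cdot,0))$ in $\L^1(\Omega)$, obtained by writing the difference as a time integral of the derivative of $t \mapsto \det(\I+\nabla u(\cdot,t))$ and then controlling that derivative. Concretely, first recall from the linear-algebra preliminaries that $H \mapsto \cof(A):H$ is the differential of $A \mapsto \det(A)$, so that
\begin{equation*}
\frac{\p}{\p t}\det(\I+\nabla u(\cdot,t)) = \cof(\I+\nabla u(\cdot,t)) : \nabla \dot{u}(\cdot,t),
\end{equation*}
and hence, for each fixed $t\in[0,T]$ and a.e.\ $x\in\Omega$,
\begin{equation*}
\det(\I+\nabla u(\cdot,t)) - \det(\I+\nabla u(\cdot,0)) = \int_0^t \cof(\I+\nabla u(\cdot,s)) : \nabla \dot{u}(\cdot,s)\, \d s.
\end{equation*}

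Next I would integrate over $\Omega$, apply Fubini and the Cauchy--Schwarz/Hölder inequality in space: for each $s$,
\begin{equation*}
\int_\Omega |\cof(\I+\nabla u(\cdot,s)) : \nabla \dot{u}(\cdot,s)|\, \d\Omega \leq \|\cof(\I+\nabla u(\cdot,s))\|_{\LLL^{p'}(\Omega)} \|\nabla \dot{u}(\cdot,s)\|_{\LLL^{p}(\Omega)}.
\end{equation*}
Here I use that $\cof(A)$ is a homogeneous polynomial of degree $d-1$ in the entries of $A$, so $|\cof(\I+\nabla u)| \leq C(1+|\nabla u|^{d-1})$ pointwise, giving $\|\cof(\I+\nabla u(\cdot,s))\|_{\LLL^{p'}(\Omega)} \leq C(1 + \|\nabla u(\cdot,s)\|_{\LLL^{p(d-1)}(\Omega)}^{d-1})$. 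Strictly speaking one needs $\nabla u(\cdot,s)\in\LLL^{p(d-1)}(\Omega)$; since we only assume $u\in\W^{1,p}(0,T;\WW^{1,p}(\Omega))$ this requires the Sobolev embedding $\WW^{1,p}(\Omega)\hookrightarrow \LLL^{p(d-1)}(\Omega)$ (valid for $p>d\geq d-1$ in dimension $d\leq 3$, since then $\WW^{1,p}\hookrightarrow \LL^\infty$), so that $\|\nabla u(\cdot,s)\|_{\LLL^{p(d-1)}(\Omega)} \leq C\|\nabla u(\cdot,s)\|_{\WW^{1,p}(\Omega)}$; alternatively, as the statement is phrased with $\L^\infty(0,T;\LL^p(\Omega))$ norms, one uses that in the target norm the cofactor is measured in the way consistent with the paper's $\WWW^{1,p}$ algebra estimate~\eqref{W-algebra}. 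Either way one arrives at
\begin{equation*}
\|\det(\I+\nabla u(\cdot,t)) - \det(\I+\nabla u(\cdot,0))\|_{\L^1(\Omega)} \leq C\left(1+\|\nabla u\|_{\L^\infty(0,T;\LL^p(\Omega))}^{d-1}\right)\int_0^t \|\nabla\dot{u}(\cdot,s)\|_{\LL^p(\Omega)}\, \d s.
\end{equation*}

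Finally I would apply Hölder's inequality in time to the last integral, $\int_0^t \|\nabla\dot{u}(\cdot,s)\|_{\LL^p(\Omega)}\, \d s \leq t^{1/p'} \|\nabla\dot{u}\|_{\L^p(0,T;\LL^p(\Omega))} = t^{1-1/p}\|\nabla\dot{u}\|_{\L^p(0,T;\LL^p(\Omega))}$, which yields exactly the claimed bound. The only mild subtlety — and the step I would present with the most care — is the integrability of $\cof(\I+\nabla u)$ and the exponent bookkeeping in the spatial Hölder step: one must make sure the power of $\|\nabla u\|$ that appears is $d-1$ and that the norm used is controlled by $\|\nabla u\|_{\L^\infty(0,T;\LL^p(\Omega))}$, which works because $d\in\{2,3\}$ and $p>3$ give the embedding $\WW^{1,p}(\Omega)\hookrightarrow \LL^\infty(\Omega)$. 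The time regularity $u\in\W^{1,p}(0,T;\WW^{1,p}(\Omega))$ is exactly what legitimises differentiating under the integral and writing the fundamental-theorem-of-calculus identity; no further argument is needed there.
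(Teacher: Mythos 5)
The paper does not prove this lemma; its entire ``proof'' is a citation of an earlier reference (\cite[Lemma~3]{Court2017}). Your from-scratch argument therefore cannot be compared line by line with the paper's, but it is exactly the natural argument one would expect behind that citation: differentiate $t\mapsto\det(\I+\nabla u(\cdot,t))$ using the cofactor formula, apply the fundamental theorem of calculus, apply H\"older in space with exponents $p'$ and $p$, bound the cofactor polynomially, and finish with H\"older in time, producing the factor $t^{1-1/p}$. The structure and the final exponents match the statement, so the approach is sound.

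There is one bookkeeping slip you should fix, and a hedge you should drop. From the pointwise bound $|\cof(\I+\nabla u)|\leq C(1+|\nabla u|^{d-1})$ and the spatial H\"older pairing $(p',p)$ you get
\begin{equation*}
\|\cof(\I+\nabla u(\cdot,s))\|_{\LLL^{p'}(\Omega)}\leq C\Bigl(1+\|\nabla u(\cdot,s)\|_{\LLL^{p'(d-1)}(\Omega)}^{d-1}\Bigr),
\end{equation*}
with exponent $p'(d-1)$, not $p(d-1)$ as you wrote. With the correct exponent no Sobolev embedding is needed at all: on the bounded domain $\Omega$ one has $\LLL^{p}(\Omega)\hookrightarrow\LLL^{p'(d-1)}(\Omega)$ as soon as $p'(d-1)\leq p$, which is equivalent to $p\geq d$ and holds here since $p>3\geq d$. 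This immediately gives $\|\nabla u(\cdot,s)\|_{\LLL^{p'(d-1)}(\Omega)}\leq C\|\nabla u(\cdot,s)\|_{\LLL^{p}(\Omega)}\leq C\|\nabla u\|_{\L^{\infty}(0,T;\LLL^{p}(\Omega))}$, which is the norm appearing in the statement. By contrast, the fallback you sketched via $\WW^{1,p}(\Omega)\hookrightarrow\LL^{\infty}(\Omega)$ does not actually apply here, because the hypothesis $u(\cdot,s)\in\WW^{1,p}(\Omega)$ puts $\nabla u(\cdot,s)$ only in $\LLL^{p}(\Omega)$, not in $\WWW^{1,p}(\Omega)$, so you cannot write $\|\nabla u(\cdot,s)\|_{\LLL^{p(d-1)}(\Omega)}\leq C\|\nabla u(\cdot,s)\|_{\WWW^{1,p}(\Omega)}$. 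Once the exponent is corrected to $p'(d-1)$, that entire parenthetical remark can be deleted and the proof is clean.
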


\begin{proof}
The result is provided by~\cite[Lemma~3]{Court2017}.
\end{proof}

\noindent Therefore, if $\det(\I+\nabla u(\cdot,0)) >0$ a.e. in $\Omega$, by choosing~$T>0$ small enough
we deduce that $\det(\I+\nabla u(\cdot,t)) >0$ also, and $\Id +u(\cdot,t)$ remains invertible.


\subsection{Derivation of the PDE system} \label{sec-derivPDE}
Let us explain how the system~\eqref{sysmain} of partial differential equations can be derived from the least action principle. The kinetic energy of the system and the potential stored energy are respectively given by
\begin{equation*}
	\frac{1}{2}\int_{\Omega} \rho | \dot{u} |^2 \d \Omega, 
	\quad \text{ and } \quad 	\int_{\Omega} \mathcal{W}(E(u)) \d \Omega,
\end{equation*}
where $\rho>0$ denotes the density of the material, and $E$ denotes the so-called Green -- St-Venant strain tensor. 
We consider for hyperelastic materials some general strain energy function $\mathcal{W}(E)$ satisfying assumptions~$\mathbf{A1}-\mathbf{A2}$. Recall the notation $\Phi(u) = \I+ \nabla u$., and note that
\begin{equation*}
\begin{array} {rcl}
\displaystyle
	\frac{\p \mathcal{W}(E(u))}{\p u}.v  = 
	\frac{\p \mathcal{W}}{\p E}(E(u)): (E'(u).v) =
\displaystyle
\frac{1}{2}\check{\Sigma}(E(u)): \left(
	\Phi(u)^T\nabla v + \nabla v^T\Phi(u) \right) =
	\Sigma(u):\left(\Phi(u)^T \nabla v\right),
\end{array}
\end{equation*}
as the tensor $\check{\Sigma}(E)$ is assumed to be symmetric.
Denoting by $\mathfrak{p}$ a Lagrange multiplier for the constraint~\eqref{contdet}, we consider a saddle-point of the following Lagrangian functional:
\begin{equation*}
\begin{array}{rcl}
L(u,\dot{u},\mathfrak{p}) & = & \displaystyle
\left(\frac{1}{2}\int_{\Omega} \left(\rho | \dot{u} |^2 \d \Omega
- \mathcal{W}(E(u)) \right) \d \Omega 
+ \int_{\Omega} f \cdot u\, \d \Omega 
+ \int_{\Gamma_N} g \cdot u\, \d \Gamma_N
\right)  \\[10pt]
& & \displaystyle
-  \mathfrak{p}\int_{\Omega} \left( \det\left(\Phi(u) \right) - \det\left(\Phi(u_0) \right) \right)\d\Omega  .
\end{array}
\end{equation*}
Using the Green formula, the first-order derivatives of $L$ are obtained as follows
\begin{equation*}
\begin{array} {rcl}
\displaystyle	\frac{\delta L}{\delta u} & = & \displaystyle
\frac{\p L}{\p u} - \frac{\d}{\d t}\left(\frac{\p L}{\p \dot{u}}\right),\\
\displaystyle\frac{\delta L}{\delta u}(u,\mathfrak{p}).v & = & \displaystyle
-\int_{\Omega} \rho \ddot{u} \cdot v \, \d \Omega -
\int_{\Omega} \nabla v : (\Phi(u)\Sigma(u)) \d \Omega 
+ \int_{\Omega} f \cdot v\, \d \Omega  
+ \int_{\Omega} g \cdot v\, \d \Gamma_N\\
& & \displaystyle
- \mathfrak{p}\int_{\Omega} \cof\left(\Phi(u) \right) : \nabla v\, \d\Omega \\[10pt]
& = & \displaystyle
-\int_{\Omega} \left(\rho \ddot{u} - \divg (\Phi(u)\Sigma(u)) - f\right) \cdot v \, \d \Omega \\[10pt]
& & \displaystyle- \int_{\Gamma_N} \left( \Phi(u)\Sigma(u)n  
+ \mathfrak{p}\, \cof\left(\Phi(u) \right)n -g \right)\cdot v \, \d\Gamma_N,\\[10pt]
\displaystyle\frac{\delta L}{\delta \mathfrak{p}}(u,\mathfrak{p}) & = & \displaystyle
-\int_{\Omega} \left( \det\left(\Phi(u) \right) - \det\left(\Phi(u_0) \right) \right)\d\Omega.
\end{array}
\end{equation*}
Thus, from the Euler-Lagrange equation, a critical point $(u,\mathfrak{p})$ of the functional $L$ satisfies the Ciarlet-Ne\v{c}as condition~\eqref{contdet}, and
\begin{equation*}
\begin{array} {rcl}
\rho\ddot{u}
- \divg \sigma(\nabla u) = f & & \text{in $\Omega \times(0,T)$}, \label{probnonlin1} \\
\sigma(\nabla u)n  + p\ \cof\left(F(u)\right) n = g & & \text{on $\Gamma_N\times(0,T)$},
\end{array}
\end{equation*}
recalling the notation $\sigma(\nabla u) = \Phi(u) \Sigma(u)$. For the sake of simplicity, and without loss of generality, we choose $\rho \equiv 1$. The first equation above is hyperbolic. For mathematical purpose we introduce a parabolic regularization, by adding the diffusion term ~$-\kappa\Delta \dot{u}$ in the first equation, and its corresponding Neumann term~$\kappa\displaystyle \frac{\p \dot{u}}{\p n}$ in the second equation, for some constant~$\kappa>0$. Replacing equivalently the Ciarlet-Ne\v{c}as condition by its time-derivative~\eqref{eq-constraint}, the resulting system is system~\eqref{sysmain2}, equivalent to system~\eqref{sysmain}.

\subsection{The pressure as a function of the displacement field} \label{sec-pressure}
Keep in mind that the pressure does not depend on the space variable. In the case where~$g = 0$, One can multiply the second equation of~\eqref{sysmain} by $\cof(\I+\nabla u)^{-1} = \det(\I+\nabla u)^{-1} (\I+\nabla u)^{T} = \det(\Phi(u))^{-1} \Phi(u)^T$, and then one derives an expression for the pressure, written in terms of $(u,\dot{u})$, as follows:
\begin{equation}
\mathfrak{p} = -\frac{1}{\Gamma_N} \int_{\Gamma_N} \det(\Phi(u))^{-1} \Phi(u)^T 
\left( \kappa \frac{\p \dot{u}}{\p n} + \sigma(\nabla u) n - g \right)\cdot n\, \d \Gamma_N.
\label{exp-pressure}
\end{equation}

\begin{remark}
\noindent Note that in the formal case where $\kappa = 0$ and $g=0$, and for strain energy density functions that are function of the so-called symmetric {\it right Cauchy–Green deformation tensor} $C:= \Phi(u)^T\Phi(u)$, so that we have $\sigma(\nabla u) = \Phi(u)\hat{\Sigma}(C)$, and the pressure $\mathfrak{p}$ can be expressed only in terms of $C$ as follows
\begin{equation*}
	\mathfrak{p}\, n  =  - \det(C)^{-1/2} C\hat{\Sigma}(C)n, \quad
	\mathfrak{p}  =  - \frac{1}{\Gamma_N}\int_{\Gamma_N}
	\det(C)^{-1/2} \hat{\Sigma}(C)n \cdot Cn \, \d \Gamma_N.
\end{equation*}
Thus, physically, $\mathfrak{p}$ is a function of local change in distances due to the deformation $x\mapsto \Id+u$.
\end{remark}
Using~\eqref{exp-pressure} and the same previous techniques of estimation, we can deduce the following regularity for the pressure:
\begin{equation*}
\left\{ \begin{array} {l}
u\in \mathcal{U}_{p,T}(\Omega) \\
g \in \mathcal{G}_{p,T}(\Gamma_N)
\end{array}\right.
\Rightarrow
\mathfrak{p} \in \W^{1/2p',p}(0,T;\R) = \mathcal{P}_{p,T}.
\end{equation*}
In particular, the pressure~$\mathfrak{p}$ is continuous in time.

\begin{remark}
Assume~$\kappa=0$ and $g=0$. Taking the scalar product of the second equation of~\eqref{sysmain} by any test function~$v$, and integrating over~$\Gamma_N$, we get
\begin{equation*}
\int_{\Gamma_N} v\cdot \sigma(\nabla u)n\, \d \Gamma_N +
\mathfrak{p}\int_{\Gamma_N} v \cdot \cof(\Phi(u))n\, \d \Gamma_N = 0
\end{equation*}
The displacement~$u$ defines the deformation~$\Id + u$. Recall that~$\sigma(\nabla u) = \Phi(u)\Sigma(u)$, where~$\Sigma$ is also called the second Piola-Kirchhoff stress tensor. It is related with the so-called Cauchy stress tensor~$\mathcal{T}$ -- defined in the deformed configuration~$(\Id+u)(\Gamma_N)$ -- via the following relation
\begin{equation*}
\sigma(\nabla u) = \Phi(u) \Sigma(u) = (\mathcal{T} \circ (\Id + u))\, \cof(\Phi(u)).
\end{equation*}
The integrals on~$\Gamma_N$ are transformed into integrals on the deformed boundary~$\Gamma_N(t):=(\Id+u)(\Gamma_N)$ as follows
\begin{equation*}
\int_{\Gamma(t)} (v\circ(\Id+u)^{-1}) \cdot \mathcal{T}n\, \d \Gamma_N(t) +
\mathfrak{p} \int_{\Gamma_N(t)} (v\circ(\Id+u)^{-1}) \cdot n\, \d \Gamma_N(t).
\end{equation*}
See~\cite[formula~$(14)$, page~51]{Gurtin}. Now choosing~$v$ such that~$(v\circ(\Id+u)^{-1}) = n$ on~$\Gamma_N(t)$, namely~$v= (\cof(\Phi(u))n)/|\cof(\Phi(u))n|_{\R^d}$ on the reference configuration~$\Gamma_N$, we obtain
\begin{equation*}
\mathfrak{p} = -\frac{1}{|\Gamma_N(t)|} \int_{\Gamma_N(t)} 
\mathcal{T}n \cdot n\, \d \Gamma_N(t),
\end{equation*}
showing that the pressure writes simply in terms of the Cauchy stress tensor, more specifically via a traction term on the deformed boundary~$\Gamma_N(t)=(\Id +u)(\Gamma_N)$.
\end{remark}

\section{Proof of Lemma~\ref{lemma-DiNezza}} \label{sec-app-B}
Let us show that functions of~$\W^{\gamma,p}(0,T;B)$ are $(\gamma-1/p)$-H\"{o}lder. For any $r>0$ and $t\in (0,T)$, introduce
\begin{equation*}
\omega_{t,r} := \{s\in (0,T) \mid |s-t| \leq r \} \cap (0,T), 
\quad \quad \quad 
\langle \varphi \rangle_{t,r} := 
\displaystyle \frac{1}{|\omega_{t,r}|}\int_{\omega_{t,r}}\varphi(s) \d s.
\end{equation*}
We write
\begin{equation}
| \varphi(t) -\varphi(0) | \leq
| \varphi(t) - \langle \varphi \rangle_{t,t} | + 
| \langle \varphi \rangle_{t,t} - \langle \varphi \rangle_{0,t} | 
+ | \langle \varphi \rangle_{0,t} -\varphi(0) |.
\label{ineq-triangle}
\end{equation}
Note that $|\omega_{0,t}| = t$ and $t \leq |\omega_{t,t}| \leq 2t$. We first estimate the second term of the right-hand-side, by using the H\"{o}lder's inequality, as follows
\begin{equation*}
\begin{array}{rcl}
| \langle \varphi \rangle_{t,t} - \langle \varphi \rangle_{0,t} | 
& \leq & \displaystyle \frac{1}{t|\omega_{t,t}|}\int_0^t \int_{\omega_{t,t}}
|\varphi(s)-\varphi(s') | \d s \d s' \\
& \leq & \displaystyle \frac{1}{(t|\omega_{t,t}|)^{1/p}}
\left(\int_0^t \int_{\omega_{t,t}}
|\varphi(s)-\varphi(s') |^p \d s \d s' \right)^{1/p}\\
& \leq & \displaystyle \frac{1}{(t|\omega_{t,t}|)^{1/p}}
\left(\sup_{\text{\begin{small}$s\in \omega_{t,t},\ s' \in \omega_{0,t}$\end{small}}}
|s-s'|^{\gamma+1/p} \right)\| \varphi\|_{\W^{\gamma,p}(0,T;B)} \\
& \leq & \displaystyle \frac{1}{t^{2/p}}(2t)^{\gamma +1/p} \| \varphi\|_{\W^{\gamma,p}(0,T;B)},
\end{array}
\end{equation*}
referring to~\eqref{def-fractional} for the definition of~$\|\varphi\|_{\W^{\gamma,p}(0,T;B)}$. Thus
\begin{equation}
| \langle \varphi \rangle_{t,t} - \langle \varphi \rangle_{0,t} | \leq
Ct^{\gamma-1/p} \|\varphi\|_{\W^{\gamma,p}(0,T;B)}.
\label{zzz1}
\end{equation}
The first and third terms of~\eqref{ineq-triangle} are treated similarly. The Lebesgue differentiation theorem states that $
\displaystyle \lim_{r\rightarrow 0} \langle \varphi \rangle_{\tau,r} = \tau$ for any $0 \leq \tau \leq T$, that we use for $\tau \in \{0,t\}$. For any $r>0$, we estimate as previously
\begin{equation*}
\begin{array} {rcl}
| \langle \varphi \rangle_{\tau,r} - \langle \varphi \rangle_{\tau,2r}| & \leq &
\displaystyle \frac{1}{|\omega_{\tau,r}| |\omega_{\tau,2r}|}
\int_{\omega_{\tau,r}}\int_{\omega_{\tau,2r}} |\varphi(s) -\varphi(s')|\d s \d s' \\
& \leq &
\displaystyle \frac{1}{(|\omega_{\tau,r}| |\omega_{\tau,2r}|)^{1/p}}
\left(\sup_{\text{\begin{small}$s\in \omega_{\tau,2r},\ s' \in \omega_{\tau,r}$\end{small}}}
|s-s'|^{\gamma+1/p} \right)
\| \varphi \|_{\W^{\gamma,p}(0,T;B)} \\
& \leq & C |\omega_{\tau,r}|^{-1/p} |\omega_{\tau,2r}|^{\gamma}
\| \varphi \|_{\W^{\gamma,p}(0,T;B)} \\
& \leq & C r^{-1/p} (4r)^{\gamma}
\| \varphi \|_{\W^{\gamma,p}(0,T;B)} \leq 
Cr^{\gamma-1/p}
\| \varphi \|_{\W^{\gamma,p}(0,T;B)},
\end{array}
\end{equation*}
where we used $|\omega_{\tau,r}|\geq r$ and $|\omega_{\tau,2r}| \leq 4r$. Now we choose $r = r_i := 2^{-i}t$, and for any $k\in \mathbb{N}$ we deduce
\begin{equation*}
\begin{array} {rcl}
| \langle \varphi \rangle_{\tau,r_k} - \langle \varphi \rangle_{\tau,t}| & \leq &
\displaystyle \sum_{i=1}^k 
| \langle \varphi \rangle_{\tau,r_i} - \langle \varphi \rangle_{\tau,r_{i-1}}|
\leq
\displaystyle \sum_{i=1}^k 
| \langle \varphi \rangle_{\tau,r_i} - \langle \varphi \rangle_{\tau,2r_i}| 
 \\
& \leq & Ct^{\gamma-1/p} \|\varphi\|_{\W^{\gamma,p}(0,T;B)}
\left(\displaystyle \sum_{i=1}^k 2^{-i(\gamma-1/p)} \right)
\leq  Ct^{\gamma-1/p} \|\varphi\|_{\W^{\gamma,p}(0,T;B)}.
\end{array}
\end{equation*}
Passing to the limit when $k\rightarrow \infty$, we obtain
\begin{equation*}
|  \varphi (\tau) - \langle \varphi \rangle_{\tau,t}|
\leq Ct^{\gamma-1/p} \|\varphi\|_{\W^{\gamma,p}(0,T;B)}.
\end{equation*}
By choosing $\tau=0$ and $\tau=t$ we then get
\begin{equation}
|  \varphi (0) - \langle \varphi \rangle_{0,t}| + 
|  \varphi (t) - \langle \varphi \rangle_{t,t}|
\leq Ct^{\gamma-1/p} \|\varphi\|_{\W^{\gamma,p}(0,T;B)}.
\label{zzz2}
\end{equation}
Combining~\eqref{ineq-triangle}, \eqref{zzz1} and \eqref{zzz2} yields the first announced estimate. The second estimate follows by the triangular inequality, and completes the proof.

\setstretch{1}
\section*{Statement}
The author confirms that he is the single contributor to this work, that the latter did not receive any funding, and does not present any conflict of interest. No data were needed for achieving this work.

\printbibliography

\end{document}